\DeclareMathAlphabet{\mathcal}{OMS}{cmsy}{m}{n}
\newcommand{\ord}[1]{\mathcal{O}\left(#1\right)}
\newcommand{\e}{\mathrm{e}}
\newcommand{\laplace}{\Delta}
\newcommand{\grad}{\nabla}
\newcommand{\pt}{\partial}
\newcommand{\eps}{\varepsilon}
\newcommand{\norm}[2]{\|{#1}\|_{#2}}
\newcommand{\bignorm}[2]{\left\| {#1} \right\|  _{#2}}
\newcommand{\tnorm}[1]{\left|\!\!\;\left|\!\!\;\left| {#1}
                       \right|\!\!\;\right|\!\!\;\right|}
\newcommand{\enorm}[1]{\tnorm{#1}_\eps}
\newcommand{\lnorm}[1]{\tnorm{#1}_{LPS}}
\newcommand{\half}{\frac{1}{2}}
\newcommand{\N}{\mathbb{N}}
\newcommand{\R}{\mathbb{R}}
\newcommand{\B}{\mathcal{B}}
\newcommand{\I}{\mathcal{I}}
\newcommand{\J}{\mathcal{J}}
\newcommand{\PS}{\mathcal{P}}
\newcommand{\QS}{\mathcal{Q}}
\newcommand{\ve}[1]{\underline{\mathbf{#1}}}
\newcounter{tmp}
\newcommand{\makeballnumber}[1]{\setcounter{tmp}{\theenumi}%
\setcounter{enumi}{#1}%
\leavevmode \csname beamer@@tmpl@enumerate item\endcsname%
\setcounter{enumi}{\thetmp}}
\newcommand{\makeball}{\leavevmode \csname beamer@@tmpl@itemize item\endcsname}
\definecolor{seb}{rgb}{0.5,0,0}
\theoremstyle{plain}
\newtheorem{thm}{Theorem}[chapter]
\newtheorem{lem}[thm]{Lemma}
\newtheorem{ass}[thm]{Assumption}
\newtheorem{rem}[thm]{Remark}
\newcommand{\faculty}[1]{\gdef\@faculty{#1}}
\newcommand{\acadtitle}[1]{\gdef\@acadtitle{#1}}
\newcommand{\birthday}[1]{\gdef\@birthday{#1}}
\newcommand{\birthplace}[1]{\gdef\@birthplace{#1}}
\newcommand{\disstitle}{%
  \thispagestyle{empty}%
  \begin{center}\leavevmode
    \normalfont
    {\Large \@title\par}
    \null
    \vfill
    \begin{LARGE}
     \textbf{H A B I L I T A T I O N S S C H R I F T}
    \end{LARGE}
    \\[1.2cm]
    \vfill
    vorgelegt\\[0.5cm]
    der Fakult\"at \@faculty\\
    der Technischen Universit\"at Dresden\\
    \vfill
    von\\[0.7cm]
    \begin{Large}
     \@acadtitle~\@author
    \end{Large}
    \\[0.7cm]
    geboren am \@birthday~in \@birthplace
  \end{center}
  \vfill
  \begin{tabbing}
  Tag der Disputation\,\=: \kill
  Eingereicht am       \>: 31.05.2013\\
  Tag der Disputation  \>: 20.01.2014\\[0.7cm]
  \end{tabbing}
  Die Habilitationsschrift wurde in der Zeit von Juni 2012 bis Mai 2013
  im Institut für Numerische Mathematik angefertigt.
  \clearpage\pagestyle{empty}\mbox{}\clearpage
  \setcounter{page}{1}
  }
\title{Uniform Error Estimation for Convection-Diffusion Problems}
\author{Sebastian~Franz}
\date{\today}
\newcommand{\appauthor}{}
\newcommand{\apptitle}{}
\newcommand{\apppub}{}
\renewcommand{\maketitle}{\disstitle}
\begin{document}
%
%
   \selectlanguage{english}
   \maketitle
   \pagestyle{fancy}
   \fancyhf{}
   \fancyhead[C]{\nouppercase\leftmark}
   \fancyfoot[C]{\textbf{\thepage}}
%
%
   \pagenumbering{roman}
   \dominitoc[n]
   \tableofcontents
   \clearpage

%
%
   \addstarredchapter{Acknowledgement}
\chapter*{Acknowledgement}

I would like to thank all my colleagues whom I had the 
pleasure to work with over the recent years. This includes especially
the group of Prof. Hans-Görg Roos and Prof. Torsten Linß 
(now in Hagen) in Dresden, the Irish guys Dr. Natalia Kopteva and 
Prof. Martin Stynes, and Prof. Gunar Matthies in Kassel.\\

Life is not only mathematics --- although a good part of it is. 
I'm very grateful that Anja chose to follow me to Ireland and back.
Thanks for staying at my side, keeping me down-to-earth and 
becoming my wife!

   ~\pagestyle{empty}\newpage
%
%
   \pagenumbering{arabic}
   \pagestyle{fancy}
   \fancyhf{}
   \fancyhead[C]{}
   \fancyfoot[C]{\textbf{\thepage}}
   \addstarredchapter{Abstract}
   \chapter*{Abstract}
  Let us consider the singularly perturbed model problem
   \begin{align*}
     Lu:=-\eps\laplace u-bu_x+c u & =f\\
   \intertext{with homogeneous Dirichlet boundary conditions on $\Gamma=\partial\Omega$}
     u|_\Gamma & =0
   \end{align*}
  on the unit-square $\Omega=(0,1)^2$.
  Assuming that $b>0$ is of order one,
  the small
  perturbation parameter $0<\eps\ll 1$ causes boundary layers in the solution.\\

  In order to solve above problem numerically, it is beneficial to resolve these
  layers. On properly layer-adapted meshes we can apply finite element methods and
  observe convergence.\\

  We will consider standard Galerkin and stabilised FEM applied to above problem.
  Therein the polynomial order $p$ will be usually greater then two, i.e. we will
  consider higher-order methods.\\

  Most of the analysis presented here is done in the standard energy norm. 
  Nevertheless, the question arises: Is this the right norm for this kind of problem, 
  especially if characteristic layers occur? 
  We will address this question by looking into a balanced norm.\\

  Finally, a-posteriori error analysis is an important tool to construct adapted meshes
  iteratively by solving discrete problems, estimating the error and adjusting the mesh
  accordingly. We will present estimates on the Green's function associated with $L$, that
  can be used to derive pointwise error estimators.
  
%
%
%
   \chapter{Introduction}
   \fancyhead[C]{\nouppercase\leftmark\\}
  Simple model problems are often helpful in understanding the behaviour 
  of numerical methods in presence of layers for more complicated problems.
  We will consider the singularly perturbed convection-diffusion problem with
  an exponential layer at the outflow boundary and two characteristic layers,
  given by
  \begin{subequations}\label{eq:Lu}
   \begin{alignat}{2}
    -\eps\laplace u-b u_x+c u & =f &\qquad& \text{in} \ \Omega=(0,1)^2,\\
     u & =0 &\qquad& \text{on}\ \partial\Omega.
   \end{alignat}
  \end{subequations}
  We assume $f\in C(\bar\Omega)$, $b\in W^1_\infty(\bar\Omega)$ and $c\in L_\infty(\bar\Omega)$.
  Furthermore, let $b(x,y)\ge\beta$ for $(x,y)\in\bar{\Omega}$ with some positive
  constant $\beta$ of order one, while $0<\eps\ll 1$ is a small perturbation parameter.
  For further assumptions on $f$ see Remark~\ref{rem:plaus}.

  This combination gives rise to an exponential layer of width $\ord{\eps|\ln\eps|}$
  at $x=0$ and to two characteristic layers of width $\ord{\sqrt{\eps}|\ln\eps|}$
  at $y=0$ and $y=1$. Figure~\ref{fig:solution}
  \begin{figure}[bp]
    \centerline{
      \includegraphics[width=0.6\textwidth]{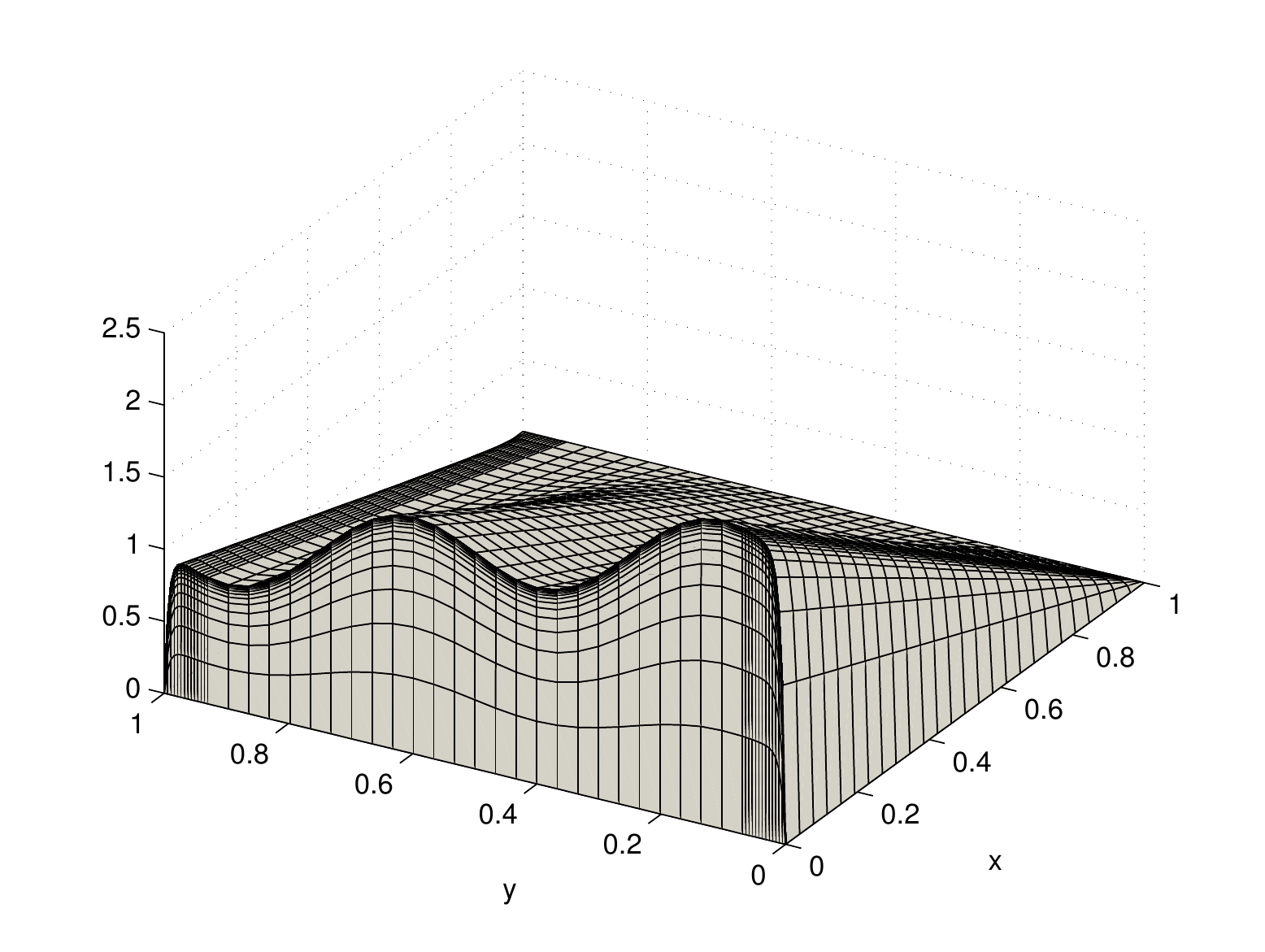}
    }
    \caption{Typical solution of~\eqref{eq:Lu} with two parabolic layers
             and an exponential layer.
             \label{fig:solution}}
  \end{figure}
  shows a typical example of a solution $u$ to \eqref{eq:Lu}.

  Under the condition
  \begin{gather}\label{eq:coer_ass}
     c+\textstyle\frac{1}{2} b_x\geq\gamma>0
  \end{gather}
  problem~\eqref{eq:Lu} possesses a unique solution in
  $H^1_0(\Omega)\cap H^2(\Omega)$. Note that \eqref{eq:coer_ass}
  can always be satisfied by a transformation
  $\tilde{u}(x,y) = u(x,y) e^{\varrho x}$ with a suitably chosen constant $\varrho$.
  In our case $\varrho$ with $\varrho(b-\eps\varrho)\geq c+\frac{1}{2}b_x+\gamma$ suffices.

  When quasi uniform meshes are used, numerical methods do
  not give accurate approximations of~\eqref{eq:Lu} unless the mesh size
  is of the order of the perturbation parameter~$\eps$.
  On the one hand this constitutes a prohibitive restriction for a practical
  treatment of singularly perturbed problems.
  But on the other hand, the mesh sizes do only have to be small in the layer region.
  Therefore, layer-adapted meshes are often used to obtain efficient discretisations.
  
  Based on a priori knowledge of the layer behaviour, we apply
  a-priori adapted meshes.
  Early ideas on layer-adapted meshes can be found in \cite{Bak69,MO'RS96,SO'R97,vVel78}.
  We will use generalisations of \emph{Shishkin meshes}, so called \emph{S-type meshes}
  \cite{RoosLins99, Linss00_1, Linss10}, that resolve the layers and yield robust (or uniform) 
  convergence.
  
  In Figure~\ref{fig:solution} the layer-resolving effect of Shishkin's idea can be 
  seen clearly. We have condensed meshes near the characteristic boundaries 
  ($y=1$ and $y=0$, resp.) and the outflow boundary ($x=0$).

  Even on layer-adapted meshes the standard Galerkin method shows instabilities, see~\cite{LS01a,SCX10}.
  Therefore, stabilised discretisations have to be considered. 
  The recent book by Roos, Stynes and Tobiska~\cite{RST08} gives an overview of 
  many stabilisation ideas.
  
  We will apply and analyse two stabilisation techniques. The first one will be
  the streamline-diffusion finite element method (SDFEM), introduced by Hughes 
  and Brooks~\cite{HB79}. For problems with characteristic layers,
  the SDFEM with bilinear elements was analysed in~\cite{FrLR08}.
  Here we will look into higher-order finite element methods.
  A disadvantage of the SDFEM accounts in particular for discretisations
  with higher-order elements.
  Several additional terms like second order derivatives have
  to be assembled in order to ensure Galerkin orthogonality of the resulting method.

  The second stabilisation technique does not fulfil the Galerkin orthogonality.
  It is the Local Projection Stabilisation method, proposed originally for the Stokes
  problem in~\cite{BB01}. Although, the Galerkin orthogonality is not valid, the remainder
  can be bounded such that the optimal order of convergence is maintained.
  Again, we will look into higher-order methods.
  
  The main focus of our analysis will be the uniform convergence and supercloseness
  of the numerical methods with respect to $\eps$.
  Most of it is done in the so-called energy norm
  \begin{gather}\label{eq:norm:energy}
    \enorm{v} := \left(\eps \norm{\grad v}{0}^2 + \gamma \norm{v}{0}^2\right)^{1/2}.
  \end{gather}
  We denote by $\norm{\cdot}{L_p(D)}$ the standard $L_p$-norm over $D\subset\R^2$. 
  Whenever $p=2$ we write $\norm{\cdot}{0,D}$ and if $D=\Omega$ we skip the reference 
  to the domain. 
  
  Not all norms are equally adequate in measuring errors for problems with layers.
  Although the energy-norm is the associated norm to the weak formulation of
  \eqref{eq:Lu}, not all features of the solution are ``seen''. Especially for
  small $\eps$ the characteristic layer term is less represented then the
  exponential one. Therefore,s we will also consider a balanced norm, where
  both types of layer are equally well represented.

  Another norm that is suitable in recognising the layer behaviour is the $L_\infty$-norm.
  We will not present a-priori results in the maximum-norm but an approach to uniform pointwise
  a-posteriori error estimation using the Green's function.
  
  This habilitation treatise is structured as follows. 
  In Chapter~\ref{cha:meshandmethod} the basics are given, i.e.
  a solution decomposition of $u$ is assumed,
  meshes, polynomial spaces and interpolation operators defined,
  and finally the numerical methods are given.
  In Chapter~\ref{cha:results} we present several analytical and numerical results on the
  convergence and supercloseness of the numerical methods in the energy and related norms.
  In Chapter~\ref{cha:norm} we consider the question, whether a different norm then the
  energy norm could and should be used in the analysis.
  Finally, in Chapter~\ref{cha:green} we present $L_1$-norm estimates of the Green's 
  function associated with problems like \eqref{eq:Lu}. Moreover, they are applied 
  in a first a-posteriori error-estimator for a simple finite difference method.
  
  Most of the results of the Chapters~\ref{cha:meshandmethod}-\ref{cha:green} are from
  already published work. Eight of the papers, whose content is contained in these chapters, 
  are given in the appendix.

  \textbf{Notation.} Throughout this treatise, $C$ denotes a generic constant that is
  independent of both the perturbation parameter $\eps$ and the mesh parameter $N$.
  The dependence of any constant on the polynomial order $p$ will not be
  elaborated.

%
%
   \chapter{Meshes and Numerical Methods}\label{cha:meshandmethod}
   \fancyhead[C]{\nouppercase\leftmark\\
                 Section \nouppercase\rightmark}

This chapter contains results from \cite{FrM10, FrM10_1, Fr12} that are also 
given in Appendix \ref{app:LPS}, \ref{app:nonstandard} and \ref{app:GLinter}.

\section{Solution Decomposition}

  Our uniform numerical analysis is based on a decomposition of the solution $u$
  of~\eqref{eq:Lu}. To be more precise: We suppose the existence of a decomposition of $u$
  into a regular solution component and various layer parts.
%
  \begin{ass}\label{ass:dec}
   The solution $u$ of problem~\eqref{eq:Lu} can be decomposed as
   \begin{gather*}
      u =v+w_1+w_2+w_{12},
   \end{gather*}
   where we have for all $x,y\in[0,1]$ and $0\le i+j\le p+1$ the
   pointwise estimates
   \begin{equation}\label{eq:dec:C0}
    \left.
      \begin{aligned}
        \left|\frac{\partial^{i+j} v}{\partial x^i \partial y^j}(x,y)\right|
             &\le C,
        \quad
        \left|\frac{\partial^{i+j} w_1}{\partial x^i \partial y^j}(x,y)\right|
              \le C\eps_{}^{-i}e_{}^{-\beta x/\eps},\\[0.2cm]
        \left|\frac{\partial^{i+j} w_2}{\partial x^i \partial y^j}(x,y)\right|
             &\le C\eps_{}^{-j/2}
                \left(e_{}^{-y/\sqrt\eps}+e_{}^{-(1-y)/\sqrt\eps} \right), \\[0.2cm]
        \left|\frac{\partial^{i+j} w_{12}}{\partial x^i \partial y^j}(x,y)\right|
             &\le C\eps^{-(i+j/2)} e^{-\beta x/\eps}
                    \left(e^{-y/\sqrt\eps}+e^{-(1-y)/\sqrt\eps}\right).
      \end{aligned}
    \right\}
   \end{equation}
   Here $w_1$ is the exponential boundary layer, $w_2$ covers the characteristic
   boundary layers, $w_{12}$ the corner layers, and $v$ is the regular part of 
   the solution.
  \end{ass}
%
  \begin{rem}\label{rem:plaus}
    The validity of Assumption~\ref{ass:dec} is proved in \cite{KSt05,KSt07} 
    for constant functions $b,\, c$ under certain compatibility 
    and smoothness conditions on the right-hand side $f$. 

    In \cite{FrK10_1} the Green's function associated with problem \eqref{eq:Lu}
    was analysed. It was shown, that the Green's function $G$ in the variable-coefficient
    case and the Green's function $\bar G$ in the constant coefficient case show
    a similar behaviour and the same estimates. As a Green's function
    can be used to represent the solution $u$ of its associated problem by
    \[
     u(x,y)=\iint_\Omega G(x,y;\xi,\eta)f(\xi,\eta)d\xi d\eta,
    \]
    it is reasonable to assume the validity of Assumption~\ref{ass:dec}
    in the variable-coefficient case too.
  \end{rem}

\section{Layer-Adapted Meshes}
  
  A discretisation of a singularly perturbed problem on an equidistant mesh results
  in oscillatory solutions unless the mesh-size is of order $\eps$. A loophole
  lies in layer-adapted meshes that are fine in layer regions and coarse in regions,
  where the solution and its derivatives are uniformly bounded.
  
  Back in 1969 Bakhvalov~\cite{Bak69} proposed one of the first layer-adapted
  meshes. Analysis on these kinds of graded meshes is somewhat difficult. The piecewise
  uniform Shishkin meshes~\cite{MO'RS96} proposed in 1996 are easier to handle.
  The first analysis of finite element methods on Shishkin meshes was
  published in~\cite{SO'R97}.
  For a detailed discussion of properties of Shishkin meshes and their uses
  see \cite{RST08} and also \cite{Linss10} for a survey on layer-adapted meshes.
  
  Here we use a tensor-product mesh that
  is constructed by taking in both $x$- and $y$-direction so called 
  \emph{S-type meshes} \cite{RoosLins99} with $N$ mesh intervals in each direction.
  These meshes condense in the layer regions and are equidistant outside 
  the layer region. The points, where the mesh-character changes, are called
  \emph{transition points}. We define them by
  \begin{gather}\label{eq:ass:trans}
    \lambda_x :=\frac{\sigma\eps}{\beta}\ln N \le \frac{1}{2}
   \quad\text{and}\quad
    \lambda_y :=\sigma\sqrt\eps\ln N          \le \frac{1}{4},
  \end{gather}
  with some user-chosen positive parameter $\sigma>0$.
  In \eqref{eq:ass:trans} we assumed 
  \begin{gather}\label{eq:ass:eps}
   \eps\leq \min\left\{\frac{\beta}{2\sigma}(\ln N)^{-1},\frac{1}{16\sigma^2}(\ln N)^{-2}\right\}
       \leq C(\ln N)^{-2}
  \end{gather}
  which is typically for \eqref{eq:Lu} as otherwise $N$ would be exponentially 
  large in $\eps$.

  Using these transition points, the domain $\Omega$ is divided into the 
  subdomains $\Omega_{11}$, $\Omega_{12}$, $\Omega_{21}$ and $\Omega_{22}$ 
  as shown in Fig.~\ref{fig:s_mesh}.
  Here $\Omega_{12}$ covers the
  exponential layer, $\Omega_{21}$ the characteristic layers, $\Omega_{22}$
  the corner layers and $\Omega_{11}$ the remaining non-layer region.
  \begin{figure}[tb]
   \begin{center}
     \begin{minipage}[c]{6cm}
      \vspace*{0cm}
      \setlength{\unitlength}{0.55pt}
      \begin{picture}(256,256)
       \put(  0,  0){\line( 0, 1){256}}
       \put( 36,  0){\line( 0, 1){256}}
       \put(256,  0){\line( 0, 1){256}}
       \put(  0,  0){\line( 1, 0){256}}
       \put(  0, 40){\line( 1, 0){256}}
       \put(  0,216){\line( 1, 0){256}}
       \put(  0,256){\line( 1, 0){256}}

       \put(  2, 16){$\Omega_{22}$}
       \put(  2,128){$\Omega_{12}$}
       \put(  2,226){$\Omega_{22}$}
       \put(128, 16){$\Omega_{21}$}
       \put(128,128){$\Omega_{11}$}
       \put(128,226){$\Omega_{21}$}
      \end{picture}
     \end{minipage}
     \begin{minipage}[c]{6.5cm}
      \begin{align*}
       \Omega_{11}&:=[\lambda_x,1]\times [\lambda_y,1-\lambda_y],\\
       \Omega_{12}&:=[0,\lambda_x]\times [\lambda_y,1-\lambda_y],\\
       \Omega_{21}&:=[\lambda_x,1]\times \big([0,\lambda_y]\cup[1-\lambda_y,1]\big),\\
       \Omega_{22}&:=[0,\lambda_x]\times \big([0,\lambda_y]\cup[1-\lambda_y,1]\big)\\
      \end{align*}
     \end{minipage}
   \end{center}
   \caption{Decomposition of $\Omega$ into subregions.\label{fig:s_mesh}}
  \end{figure}

  By choosing the transition points $\lambda_x$ and $\lambda_y$ according to
  \eqref{eq:ass:trans}, the layer terms $w_1$, $w_2$, and $w_{12}$ of $u$ are of
  size $\ord{N^{-\sigma}}$ on $\Omega_{11}$, i.e.,
  \begin{gather*}
     \big|w_1(x,y)\big| + \big|w_2(x,y)\big| + \big|w_{12}(x,y)\big|
     \leq C N^{-\sigma}  \quad\text{for} \ (x,y)\in\Omega_{11}.
  \end{gather*}
  The parameter $\sigma$ is typically equal to the formal
  order of the numerical method or is chosen slightly larger to accommodate the error analysis. The 
  precise definition of $\sigma$ will be given later.

  The domain $\Omega_{11}$ will be dissected uniformly while the dissection in the
  other subdomains depends on the \emph{mesh generating} function $\phi$.
  This function is monotonically increasing and satisfies
  $\phi(0)=0$ and $\phi(1/2)=\ln N$.
  The precise definition of the tensor product mesh $T^N$ is given by the mesh points
  \begin{align*}
   x_i&:=\begin{cases}
          \frac{\sigma\eps}{\beta}\phi\left(\frac{i}{N}\right),
                                          &i=0,\dots,N/2,\\
          1-2(1-\lambda_x)(1-\frac{i}{N}),&i=N/2,\dots,N,
         \end{cases}\\
   y_j&:=\begin{cases}
          \sigma\sqrt{\eps}\phi\left(\frac{2j}{N}\right),  &j=0,\dots,N/4,\\
          \half+(1-2\lambda_y)(\frac{2j}{N}-1),            &j=N/4,\dots,3N/4,\\
          1-\sigma\sqrt{\eps}\phi\left(2-\frac{2j}{N}\right),&j=3N/4,\dots,N.
         \end{cases}
  \end{align*}
%
  Now with an arbitrary function $\phi$ fulfilling
  above conditions, an \emph{S-type mesh} is defined.
  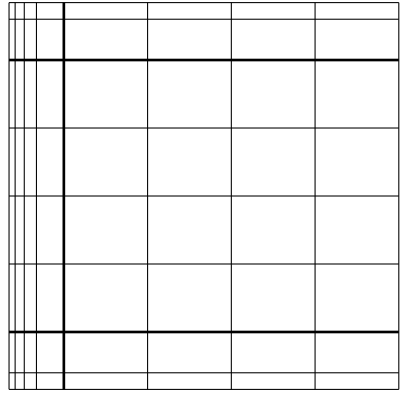
\begin{figure}[tb]
   \begin{center}
     \begin{minipage}[c]{6cm}
      \vspace*{0cm}
      \setlength{\unitlength}{0.57pt}
      \begin{picture}(256,256)
       \put(  0,  0){\line( 0, 1){256}}
       \put(  4,  0){\line( 0, 1){256}}
       \put( 10,  0){\line( 0, 1){256}}
       \put( 18,  0){\line( 0, 1){256}}

       \put( 91,  0){\line( 0, 1){256}}
       \put(146,  0){\line( 0, 1){256}}
       \put(201,  0){\line( 0, 1){256}}
       \put(256,  0){\line( 0, 1){256}}

       \put(  0,  0){\line( 1, 0){256}}
       \put(  0, 11){\line( 1, 0){256}}

       \put(  0, 83){\line( 1, 0){256}}
       \put(  0,128){\line( 1, 0){256}}
       \put(  0,173){\line( 1, 0){256}}

       \put(  0,245){\line( 1, 0){256}}
       \put(  0,256){\line( 1, 0){256}}
       
       \linethickness{1pt}
       \put(  0,218){\line( 1, 0){256}}
       \put(  0, 38){\line( 1, 0){256}}
       \put( 36,  0){\line( 0, 1){256}}
      \end{picture}
     \end{minipage}
   \end{center}
   \caption{Layer-adapted mesh $T^8$ of $\Omega$.\label{fig:mesh}}
  \end{figure}
  Fig.~\ref{fig:mesh} shows an example of such a mesh.

  Related to the mesh-generating function $\phi$, we define by
  \[
    \psi=\e^{-\phi}
  \]
  the \emph{mesh-characterising function} $\psi$
  which is monotonically decreasing with $\psi(0)=1$ and $\psi(1/2)=N^{-1}$.

  In Table~\ref{tab:S-mesh} some representatives of S-type meshes
  from~\cite{RoosLins99} are given.
  The polynomial S-mesh has an additional parameter $m>0$ to
  adjust the grading inside the layer.
  \begin{table}[tb]
    \caption{Some examples of mesh-generating and mesh-characterising functions of
             S-type meshes.\label{tab:S-mesh}}
     \begin{center}
      \begin{tabular}{l|l l| l l}
        Name & $\phi(t)$ & $\max\phi'$ & $\psi(t)$ & $\max|\psi'|$\\
        \hline
        \rule{0pt}{1.1em}Shishkin mesh & $2t\ln N$
                      & $2\ln N$
                      & $N^{-2t}$
                      & $2\ln N$\\
        Bakhvalov S-mesh & $-\!\ln(1\!-\!2t(1\!-\!N^{-1}))$
                 & $2N$
                 & $1\!-\!2t(1\!-\!N^{-1})$
                 & 2\\
        polynomial S-mesh & $(2t)^m\ln N$
                          & $2m\ln N$
                          & $N^{-(2t)^m}$
                          & $C(\ln N)^{1/m}$\\
        modified Bakhvalov S-mesh & $\frac{t}{q-t},\,q=\half(1+\frac{1}{\ln N})$
                          & $3\ln^2N$
                          & $\e^{-\frac{t}{q-t}}$
                          & $3/(2q)\leq 3$
      \end{tabular}
    \end{center}
  \end{table}

  In order to provide sufficient properties for our convergence analysis, the meshes need to
  fulfil some additional assumptions.
  \begin{ass}\label{ass:s_type}
   Let the mesh-generating function $\phi$ be piecewise differentiable
   such that
   \begin{gather}\label{eq:ass:s_type1}
    \max_{t\in[0,\half]} \phi'(t)\leq C N\,\text{ or equivalently }
    \max_{t\in[0,\half]} \frac{|\psi'(t)|}{\psi(t)}\leq C N
   \end{gather}
   is fulfilled. Moreover, let $\phi$ fulfil
   \begin{gather}\label{eq:ass:s_type2}
    \min_{i=0,\dots,N/2-1}\left(\phi\left(\frac{i+1}{N}\right)-
                                \phi\left(\frac{i}{N}\right)\right)\geq C N^{-1}.
   \end{gather}
   Finally we assume 
   \begin{gather}\label{eq:ass:s_type3}
     \max|\psi'|:=\max_{t\in[0,\half]}|\psi'(t)|\leq C \left(\frac{N}{\ln N}\right)^{1/2}.
   \end{gather}
  \end{ass}
  \begin{rem}
   Note that \eqref{eq:ass:s_type1} is satisfied for all meshes given in
   Table~\ref{tab:S-mesh}.
   Assumption~\eqref{eq:ass:s_type2} allows to bound the mesh width in the layer
   regions from below while applying an inverse inequality. This additional
   assumption restricts the use of S-type meshes from Table~\ref{tab:S-mesh}.
   For the original Shishkin mesh, we have
   \begin{gather*}
    \min_{i=0,\dots,N/2-1}\left(\phi\left(\frac{i+1}{N}\right)-
                                \phi\left(\frac{i}{N}\right)\right)
       = C N^{-1}\ln N
       \geq C N^{-1}.
   \end{gather*}
   The Bakhvalov S-mesh and its modification both
   fulfil
   \begin{gather*}
    \min_{i=0,\dots,N/2-1}\left(\phi\left(\frac{i+1}{N}\right)-
                                \phi\left(\frac{i}{N}\right)\right)
       \geq C N^{-1}.
   \end{gather*}
   But the polynomial S-type mesh yields
   \begin{gather*}
    \min_{i=0,\dots,N/2-1}\left(\phi\left(\frac{i+1}{N}\right)-
                                \phi\left(\frac{i}{N}\right)\right)
       \geq C N^{-m}
   \end{gather*}
   such that Assumption~\eqref{eq:ass:s_type2} fails for $m>1$.
   
   The restriction \eqref{eq:ass:s_type3} is fulfilled for all meshes of 
   Table~\ref{tab:S-mesh}. Nevertheless, S-meshes fulfilling the other two
   assumptions such that \eqref{eq:ass:s_type3} is violated are possible, see
   \cite[Remark 14]{FrM10}. The quantity $1+(N^{-1}\ln N)^{1/2}\max|\psi'|$ 
   arises in the convergence analysis of the Galerkin FEM, see~\cite{FrM10},
   and can be bounded by a constant $C$ with the help of \eqref{eq:ass:s_type3}.
  \end{rem}

  Using \eqref{eq:ass:s_type1} we bound the mesh width inside the layers from above.
  Let $h_i:=x_i-x_{i-1}$ and $t_i=i/N$.
  Then, it holds for $i=1,\dots,N/2$ and $t\in[t_{i-1},t_i]$ (with $\max \phi'$
  taken over $t\in[t_{i-1},t_i]$)
  \begin{equation}\label{eq:psi}
  \begin{aligned}
    \psi(t_i)  = \e^{-\phi(t_i)}
               = \e^{-(\phi(t_i)-\phi(t))}\e^{-\phi(t)}
               & \geq \e^{-(\phi(t_i)-\phi(t_{i-1}))}\psi(t)\\
               & \geq \e^{-N^{-1}\max\phi'}\psi(t)
               \geq C\psi(t)
  \end{aligned}
  \end{equation}
  where we used~\eqref{eq:ass:s_type1} for the last estimate.
  Furthermore, we have
  \[
   x=\frac{\sigma\eps}{\beta}\phi(t)=-\frac{\sigma\eps}{\beta}\ln\psi(t)
   \quad\mbox{which gives}\quad
   \psi(t)=\e^{-\beta x/(\sigma\eps)}.
  \]
  Using this, the monotonicity of $\psi$, and~\eqref{eq:psi}, we obtain for
  $i=1,\ldots,N/2$ and $x\in[x_{i-1},x_i]$
  \begin{align}
    h_i & = \frac{\sigma\eps}{\beta}(\phi(t_i)-\phi(t_{i-1}))
          \leq \frac{\sigma}{\beta} \eps N^{-1}\max_{t\in[t_{i-1},t_i]}\phi'(t)
          \leq \frac{\sigma}{\beta} \eps N^{-1}\left(
          \max_{t\in[t_{i-1},t_i]}|\psi'(t)|\right)/\psi(t_i)\notag\\
        & \leq C \eps N^{-1}\left(\max_{t\in[t_{i-1},t_i]}|\psi'(t)|\right)/\psi(t)
          \leq C \eps N^{-1}\max|\psi'|\e^{\beta x/(\sigma\eps)}
           \label{eq:hi_estimate}
  \end{align}
  where again $\max|\psi'|:=\max\limits_{t\in[0,1/2]}|\psi'(t)|$.\\
  Similarly, we get for $j=1,\dots,N/4$ and $j=3N/4+1,\dots,N$
  \begin{align}
   k_j:=y_j-y_{j-1}
      &\leq C \eps^{1/2}N^{-1}\max|\psi'|
      \begin{cases}
       \e^{y/(\sigma\eps^{1/2})},    & j\leq N/4,\\
       \e^{(1-y)/(\sigma\eps^{1/2})},& j>3N/4,
      \end{cases}
   \label{eq:kj_estimate}
  \end{align}
  with $y\in[y_{j-1},y_j]$. Of course, the simpler bounds
  \begin{alignat*}{3}
   h_i & \leq C \eps N^{-1}\max\phi' && \leq C \eps,
   &\qquad& i=1,\ldots,N/2,\\
   k_j & \leq C \eps^{1/2}N^{-1}\max\phi' && \leq C \eps^{1/2},
   && j=1,\ldots,N/4, \; 3N/4+1,\ldots,N,
  \end{alignat*}
  follow also from~\eqref{eq:ass:s_type1}.

  For the maximal mesh sizes inside the layer regions
  \[
    h:=\max_{i=1,\dots,N/2} h_i
       \quad\mbox{and}\quad
    k:=\max_{j=1,\dots,N/4} k_j
  \]
  we assume for simplicity of the presentation
  \begin{gather}\label{eq:ass:hk}
    h\leq k\leq N^{-1}\max|\psi'|
  \end{gather}
  which represents for some meshes a restriction on $\eps$. 
  With this assumption convergence results like $\ord{h+k+N^{-1}\max|\psi'|}$
  become $\ord{N^{-1}\max|\psi'|}$.
  
  We denote by $\tau_{ij}=[x_{i-1},x_i]\times[y_{j-1},y_j]$ a specific element and by
  $\tau$ a generic mesh rectangle. Note that the mesh cells are assumed to be
  closed.

\section{Polynomial Spaces and Interpolation}

  Having a discretisation of the domain $\Omega$, let us come to discretising the
  infinite-dimensional function space $H_0^1(\Omega)$ by higher-order,
  finite-dimensional polynomial spaces.
  Let our discrete space be given by
  \begin{gather}\label{eq:fespace}
    V^N:=\Big\{v\in H_0^1(\Omega):v|_\tau\in \mathcal{E}(\tau)\;
    \forall\tau\in T^N\Big\}
  \end{gather}
  with an yet unspecified local finite element space $\mathcal{E}(\tau)$.
  
  Let $\hat{\tau}=[-1,1]^2$ denote the reference element. We will look 
  at two different polynomial spaces, the \emph{full $\QS_p$-space} given locally by
  \[
    \QS_p(\hat{\tau})
      =\text{span}\Big\{\{1,\xi,\dots,\xi^p\}\times
                    \{1,\eta,\dots,\eta^p\}\Big\},
  \]
  and the \emph{Serendipity-space} $\QS_p^\oplus$ defined locally by
  enriching the polynomial space $\PS_p$ with two edge-bubble functions:
  \begin{alignat*}{2}
   \QS_p^{\oplus}(\hat{\tau})=\PS_p(\hat{\tau})
                 &\oplus \text{span}\Big\{&&(1+\xi)(1-\eta^2)\eta^{p-2},
                                            (1+\eta)(1-\xi^2)\xi^{p-2}\Big\}.
  \end{alignat*}
  This polynomial space is also known as ``trunk element''~\cite{SzBab91,Melenk03,ABFG01,ArAw10}.
  It is the continuous quadrilateral element with the fewest degrees of freedom
  containing $\PS_p$.
  
  Both spaces can be written in the general form
  \[
   \QS_p^\clubsuit(\hat{\tau})=\text{span}
    \bigg\{\{1,\xi\}\times\{1,\eta,\dots,\eta^p\} \cup
           \{1,\xi,\dots,\xi^p\}\times\{1,\eta\} \cup \xi^2\eta^2 \widetilde{\QS}(\hat{\tau})\bigg\}
  \]
  with
  \[
   \widetilde{\QS}(\hat{\tau})=\QS_{p-2}(\hat{\tau})
  \]
  for the full space and
  \[
   \widetilde{\QS}(\hat{\tau})=\begin{cases}
                                \emptyset,&\text{ for }{p=2,\,3},\\
                                \PS_{p-4}(\hat{\tau}),&\text{ for }{p\geq 4}
                              \end{cases}
  \]
  for the Serendipity space.
  
  Note that in both cases we find $s_0\ge s_1\ge\dots\ge s_{p-2}$, such that
  \begin{gather}\label{eq:Lagrange:ki}
    \widetilde{\QS}(\hat{\tau}) = \text{span} \Big\{ \xi^i\eta^j\::\:i=0,\dots,p-2,\:
    j=0,\dots,s_i\Big\}.
  \end{gather}
  Therein the $s_i$ can also be negative.
  
  Figure~\ref{fig:Q}
  \begin{figure}[tb]
    \centerline{\includegraphics[width=0.4\textwidth]{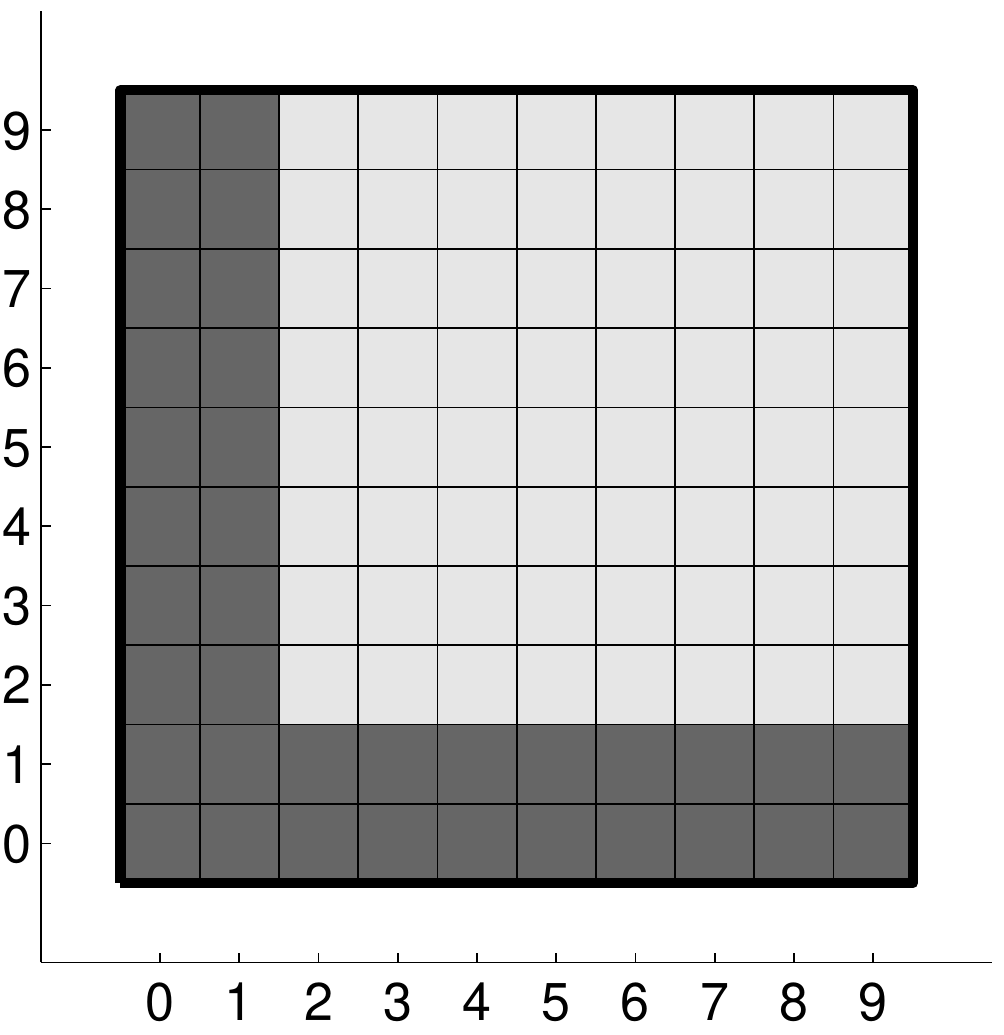}
                \quad
                \includegraphics[width=0.4\textwidth]{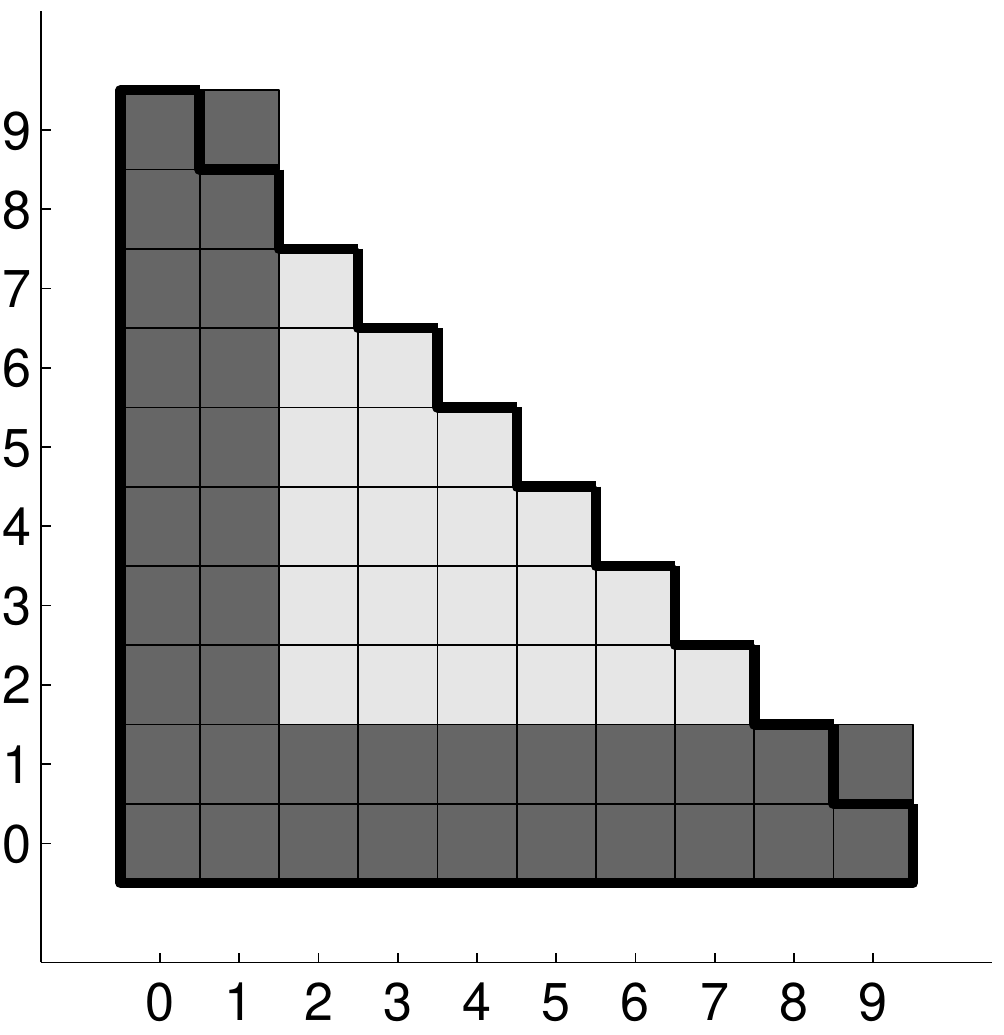}}
    \caption{Full space $\QS_p(\hat{\tau})$ (left) and
             Serendipity space $\QS_p^{\oplus}(\hat{\tau})$ (right) for $p=9$.
             \label{fig:Q}}
  \end{figure}
  gives a graphical representation of the two spaces.
  Therein a square at position $(i,j)$ stands for a basis function $\xi^i\eta^j$ of $\QS_p^\clubsuit(\hat{\tau})$.
  The darker squares correspond to those functions present in both spaces,
  while the lighter ones represent $\xi^2 \eta^2\widetilde{\QS}(\hat{\tau})$.
  Note that it holds
  \[
   \PS_p\subset\QS_p^\oplus\subset\QS_p
  \]
  and that $\QS_p^\oplus$ uses only about half the number of degrees of freedom that $\QS_p$
  uses.

  Now let us come to defining interpolation operators for these two spaces. 
  We will consider two types of interpolation: vertex-edge-cell interpolation 
  and Lagrange interpolation.
  
  \subsection*{Vertex-edge-cell interpolation operator}
  The first interpolation operator is
  based on point evaluation at the vertices, line integrals along the
  edges and integrals over the cell interior, see \cite{GR86,Lin91}.

  Let $\hat{a}_i$ and $\hat{e}_i$, $i=1,\ldots,4$, denote the vertices and
  edges of $\hat{\tau}$, respectively. We define the vertex-edge-cell
  interpolation operator
  $\hat{\pi}:C(\hat{\tau})\to\QS_p^\clubsuit(\hat{\tau})$ by
  \begin{subequations}\label{eq:general:def_inter}
  \begin{alignat}{2}
   \hat{\pi} \hat{v}(\hat{a}_i)&=\hat{v}(\hat{a}_i),\,\quad i=1,\dots,4,
   &&\label{eq:general:def_inter1}\\
   \int_{\hat{e}_i}(\hat{\pi}\hat{v})\hat{q} &= \int_{\hat{e}_i} \hat{v} \hat{q},
   \quad i=1,\dots,4,\quad
   &&\hat{q}\in \PS_{p-2}(\hat{e}_i),\label{eq:general:def_inter2}\\
   \iint_{\hat{\tau}} (\hat{\pi}\hat{v})\hat{q} &= \iint_{\hat{\tau}} \hat{v} \hat{q},
   &&\hat{q}\in \widetilde{\QS}(\hat{\tau}).
                                            \label{eq:general:def_inter3}
  \end{alignat}
  \end{subequations}
  This operator is uniquely defined and can be extended to the globally 
  defined interpolation operator
  $\pi^N:C(\overline{\Omega})\to V^N$ by
  \[
   (\pi^N v)|_\tau := \big(\hat{\pi}(v\circ F_\tau)\big)\circ F_\tau^{-1}
   \quad\forall\tau\in T^N,\,v\in
   C(\overline{\Omega}),
  \]
  with the bijective reference mapping $F_\tau:\hat{\tau}\to\tau$.
  \begin{lem}
    For the interpolation operator $\pi^N:C(\overline{\Omega})\rightarrow V^N$
    the stability property 
   \begin{gather}\label{eq:high:prop_stab:Lin}
    \bignorm{\pi^N w}{L_\infty(\tau)}\leq C\norm{w}{L_\infty(\tau)}\quad\forall w\in C(\tau),\,
    \forall \tau\subset\overline{\Omega},
   \end{gather}
   holds and we have the anisotropic error estimates
   \begin{subequations}\label{eq:high:prop_error_L2:Lin}
   \begin{align}
    \bignorm{w-\pi^N w}{L_q(\tau_{ij})}
     &\leq C \sum_{r=0}^s
                \bignorm{h_i^{s-r}k_j^r\frac{\partial^{s}w}
                              {\partial x^{s-r}\partial y^{r}}}
                        {L_q(\tau_{ij})},\\
    \bignorm{(w-\pi^N w)_x}{L_q(\tau_{ij})}
     &\leq C \sum_{r=0}^t
                \bignorm{h_i^{t-r}k_j^r\frac{\partial^{t+1}w}
                              {\partial x^{t-r+1}\partial y^{r}}}
                        {L_q(\tau_{ij})},\\
    \bignorm{(w-\pi^N w)_y}{L_q(\tau_{ij})}
     &\leq C \sum_{r=0}^t
                \bignorm{h_i^{t-r}k_j^r\frac{\partial^{t+1}w}
                              {\partial x^{t-r}\partial y^{r+1}}}
                        {L_q(\tau_{ij})}
   \end{align}
   \end{subequations}
   for $\tau_{ij}\subset\overline{\Omega}$ and $q\in[1,\infty]$, $2\leq s\leq p+1$, $1\leq t\leq p$.
  \end{lem}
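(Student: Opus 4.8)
The plan is to establish the three claims -- $L_\infty$-stability and the two anisotropic error estimates -- by the classical route: prove everything on the reference element $\hat\tau=[-1,1]^2$ and then transfer to $\tau_{ij}$ via the affine map $F_\tau$. For the stability bound \eqref{eq:high:prop_stab:Lin}, I would first show that $\hat\pi:C(\hat\tau)\to\QS_p^\clubsuit(\hat\tau)$ is a bounded linear projection in the $L_\infty$-norm. Linearity and the projection property ($\hat\pi\hat q=\hat q$ for $\hat q\in\QS_p^\clubsuit$) are immediate from the unisolvence of the defining conditions \eqref{eq:general:def_inter1}--\eqref{eq:general:def_inter3}; boundedness follows because on the finite-dimensional space $\QS_p^\clubsuit(\hat\tau)$ all norms are equivalent, so $\|\hat\pi\hat v\|_{L_\infty(\hat\tau)}$ is controlled by the (finitely many) functionals defining it -- the vertex values, the edge moments against $\PS_{p-2}(\hat e_i)$, and the cell moments against $\widetilde\QS(\hat\tau)$ -- each of which is bounded by $\|\hat v\|_{L_\infty(\hat\tau)}$. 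Since $F_\tau$ is affine and $\|\cdot\|_{L_\infty}$ is invariant under the pullback, \eqref{eq:high:prop_stab:Lin} transfers to every $\tau$ with the same constant.

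For the error estimates \eqref{eq:high:prop_error_L2:Lin}, I would use the standard Bramble--Hilbert / anisotropic-scaling argument. On the reference element, $\hat\pi$ reproduces $\PS_{s-1}(\hat\tau)$ (and in particular all tensor-product monomials of total degree $<s$) for $2\le s\le p+1$, so by a Deny--Lions/Bramble--Hilbert argument in $W^s_q(\hat\tau)$ one gets $\|\hat w-\hat\pi\hat w\|_{L_q(\hat\tau)}\le C\,|\hat w|_{W^s_q(\hat\tau)}$, and similarly $\|(\hat w-\hat\pi\hat w)_\xi\|_{L_q(\hat\tau)}\le C\,|\hat w|_{W^{t+1}_q(\hat\tau)}$ for $1\le t\le p$ (using that derivatives of order $\le t$ are reproduced in the first component). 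To obtain the \emph{anisotropic} form with separate powers $h_i^{s-r}k_j^r$, rather than scaling isotropically I would introduce the reference map on an axis-aligned rectangle and track the $x$- and $y$-scalings $h_i,k_j$ separately: each mixed derivative $\partial^s/\partial x^{s-r}\partial y^r$ picks up exactly the factor $h_i^{-(s-r)}k_j^{-r}$ under pullback, the volume element contributes $(h_ik_j)^{1/q}$ on both sides, and summing the resulting bounds over $r=0,\dots,s$ gives the stated estimate. The derivative estimates are handled the same way, with the extra $x$- or $y$-derivative on the left producing the shift in the exponents shown in (b) and (c).

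The main obstacle is the anisotropic Bramble--Hilbert step itself: the operator $\hat\pi$ is defined on the irregularly shaped polynomial space $\QS_p^\clubsuit$ (full $\QS_p$ or the Serendipity/trunk space), so one must verify carefully \emph{which} polynomials it reproduces and confirm that this set is rich enough -- it must contain all monomials $\xi^a\eta^b$ with $a+b\le s-1$ for the $L_q$-estimate and enough to kill the $W^{t+1}_q$-seminorm for the derivative estimates. For the Serendipity space with $\widetilde\QS(\hat\tau)=\PS_{p-4}$ one has to check the edge and cell moment conditions suffice to pin down interpolation of these monomials; the edge-bubble enrichment is exactly what makes this work. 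A secondary technical point is that the naive isotropic Bramble--Hilbert argument does not directly yield the anisotropic bound -- one needs either the Apel--Dobrowolski-type anisotropic version of the lemma or an explicit monomial-by-monomial bookkeeping of the scaling -- but once the reproduction properties are settled this is routine. I would also note that the estimates are local (elementwise), so no global mesh regularity beyond the tensor-product structure is needed, and the hidden constant $C$ depends on $p$ but not on $\eps$ or $N$, consistent with the paper's conventions.
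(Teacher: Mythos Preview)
Your proposal is sound and outlines exactly the standard route for such results: establish everything on the reference element (boundedness of the defining functionals for stability, a polynomial-reproduction plus anisotropic Bramble--Hilbert argument for the error bounds) and then transfer by affine scaling, tracking $h_i$ and $k_j$ separately. You also correctly flag the one nontrivial step, namely verifying for the Serendipity/trunk space that $\hat\pi$ reproduces enough monomials and that the functionals admit the direction-wise structure needed for the anisotropic (Apel-type) version of the Deny--Lions lemma.

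For comparison, the paper itself does not give a proof at all: it simply refers to \cite{FrM10_1} for the general $\QS_p^\clubsuit$ case and to \cite{ST08} for the full $\QS_p$ space. What you have sketched is essentially the argument carried out in those references, so your approach is not different in spirit but rather fills in what the paper outsources.
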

  \begin{proof}
   The proof for arbitrary $\QS^\clubsuit_p$ can be found in \cite{FrM10_1} and
   for the full space $\QS_p$ also in e.g. \cite{ST08}.
  \end{proof}

  \subsection*{Lagrange-type interpolation}
  The second interpolation type we consider is the Lagrange type, i.e. it uses only
  point-value information.

  Let $-1=\xi_0<\xi_1<\dots<\xi_{p-1}<\xi_p=+1$ and
  $-1=\eta_0<\eta_1<\dots<\eta_{p-1}<\eta_p=+1$ be two increasing sequences
  of $p+1$ points of $[-1,+1]$ which include both end points. 
  We define the Lagrange-type interpolation operator
  $\hat{J}:C(\hat{\tau})\to\QS_p^\clubsuit(\hat{\tau})$ by values at
  the vertices
  \begin{subequations}\label{eq:hatJ}
  \begin{align}
  \begin{alignedat}{2}
  (\hat{J}\hat{v})(\pm1,-1) & := \hat{v}(\pm1,-1),&\qquad
  (\hat{J}\hat{v})(\pm1,+1) & := \hat{v}(\pm1,+1)
  \end{alignedat}
  \\
  \intertext{values on the edges}
  \left.
  \begin{alignedat}{2}
  (\hat{J}\hat{v})(\xi_i,\pm1) & := \hat{v}(\xi_i,\pm1),&\qquad& i=1,\dots,p-1,\\
  (\hat{J}\hat{v})(\pm1,\eta_j) & := \hat{v}(\pm1,\eta_j),&\qquad& j=1,\dots,p-1,\\
  \end{alignedat}
  \qquad \right\}&\\
  \intertext{and values in the interior}
  \begin{alignedat}{2}\label{eq:hatJ:int}
  (\hat{J}\hat{v})(\xi_{i+1},\eta_{j+1})&
  := \hat{v}(\xi_{i+1},\eta_{j+1}),&\qquad&
  i=0,\dots,p-2, j=0,\dots,s_i,
  \end{alignedat}
  \end{align}
  \end{subequations}
  where the $s_i$ are those given in \eqref{eq:Lagrange:ki}.

  In \cite{FrM10_1} it is shown that this operator is uniquely defined.
  What is left to specify are the sequences $\{\xi_i\}$ and $\{\eta_j\}$.
  Here we consider two choices:\\
  
  \textbf{1) equidistant distribution}: We define the operator
  $J^N:C(\overline{\Omega})\to V^N$ by
  \[
   (J^N v)|_\tau := \big(\hat{J}(v\circ F_\tau)\big)\circ F_\tau^{-1}
   \quad\forall\tau\in T^N,\,v\in
   C(\overline{\Omega}),
  \]
  with the bijective reference mapping $F_\tau:\hat{\tau}\to\tau$ and 
  the local sequences
  \[
   \xi_i=\eta_i=-1+2i/p,\,i=0,\dots,p.
  \]
  
  \textbf{2) distribution according to the Gauß-Lobatto quadrature rule}:\\
  Let $-1=t_0<t_1<\dots<t_p=1$, be the zeros of
  \[
    (1-t^2)L_{p}'(t)=0,\quad t\in[-1,1],
  \]
  where $L_p$ is the Legendre polynomial of degree $p$.
  These points are also used in the Gauß-Lobatto
  quadrature rule of approximation order $2p-1$. Therefore, we refer
  to them as Gauß-Lobatto points.
  In literature they are also named Jacobi points \cite{Li04} as they are 
  also the zeros of the orthogonal Jacobi-polynomials $P_p^{(1,1)}$ of 
  order $p$.
  Now we define the operator
  $I^N:C(\overline{\Omega})\to V^N$ by
  \[
   (I^N v)|_\tau := \big(\hat{J}(v\circ F_\tau)\big)\circ F_\tau^{-1}
   \quad\forall\tau\in T^N,\,v\in
   C(\overline{\Omega}),
  \]
  with the bijective reference mapping $F_\tau:\hat{\tau}\to\tau$ and 
  the local sequences
  \[
   \xi_i=\eta_i=t_i,\,i=0,\dots,p.
  \]

  \begin{lem}
    The interpolation operators $J^N:C(\overline{\Omega})\rightarrow V^N$
    and $I^N:C(\overline{\Omega})\rightarrow V^N$ yield the stability property
   \begin{gather}\label{eq:high:prop_stab:Lagrange}
    \bignorm{J^N w}{L_\infty(\tau)}+
    \bignorm{I^N w}{L_\infty(\tau)}
    \leq C\norm{w}{L_\infty(\tau)}\quad\forall w\in C(\tau),\,
    \forall \tau\subset\overline{\Omega},\\
   \end{gather}
   and we have the anisotropic error estimates
   \begin{subequations}\label{eq:high:prop_error_L2:Lagrange}
   \begin{align}
    \bignorm{w-J^N w}{L_q(\tau_{ij})}+
    \bignorm{w-I^N w}{L_q(\tau_{ij})}
     &\leq C \sum_{r=0}^s
                \bignorm{h_i^{s-r}k_j^r\frac{\partial^{s}w}
                              {\partial x^{s-r}\partial y^{r}}}
                        {L_q(\tau_{ij})},\\
    \bignorm{(w-J^N w)_x}{L_q(\tau_{ij})}+
    \bignorm{(w-I^N w)_x}{L_q(\tau_{ij})}
     &\leq C \sum_{r=0}^t
                \bignorm{h_i^{t-r}k_j^r\frac{\partial^{t+1}w}
                              {\partial x^{t-r+1}\partial y^{r}}}
                        {L_q(\tau_{ij})},\\
    \bignorm{(w-J^N w)_y}{L_q(\tau_{ij})}+
    \bignorm{(w-I^N w)_y}{L_q(\tau_{ij})}
     &\leq C \sum_{r=0}^t
                \bignorm{h_i^{t-r}k_j^r\frac{\partial^{t+1}w}
                              {\partial x^{t-r}\partial y^{r+1}}}
                        {L_q(\tau_{ij})}
   \end{align}
   \end{subequations}
   for $\tau_{ij}\subset\overline{\Omega}$ and $q\in[1,\infty]$, $2\leq s\leq p+1$, $1\leq t\leq p$.
  \end{lem}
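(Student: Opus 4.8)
The plan is to reduce everything to the reference element $\hat\tau$ and then transform back, exactly paralleling the vertex-edge-cell case of the previous lemma. The key observation is that both $\hat J$ (with equidistant nodes) and $\hat J$ (with Gauß--Lobatto nodes) are bounded linear projections $C(\hat\tau)\to\QS_p^\clubsuit(\hat\tau)$ that reproduce all polynomials in $\QS_p^\clubsuit(\hat\tau)$; since $\QS_p^\clubsuit(\hat\tau)$ is finite-dimensional and contains $\PS_p(\hat\tau)$, this is all we need. I will treat the two operators simultaneously, writing $\hat K$ for either $\hat J$ or $\hat I$ on $\hat\tau$ and $K^N$ for the corresponding global operator.

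\textbf{Step 1 (reference stability).} On $\hat\tau$, each nodal functional $\hat v\mapsto\hat v(\text{node})$ is bounded by $\norm{\hat v}{L_\infty(\hat\tau)}$, and $\hat K\hat v=\sum_k \hat v(z_k)\,\hat\varphi_k$ for the finite nodal basis $\{\hat\varphi_k\}$ dual to the functionals in \eqref{eq:hatJ}. Hence $\norm{\hat K\hat v}{L_\infty(\hat\tau)}\le \big(\sum_k\norm{\hat\varphi_k}{L_\infty(\hat\tau)}\big)\norm{\hat v}{L_\infty(\hat\tau)}=:C\norm{\hat v}{L_\infty(\hat\tau)}$. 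The constant depends only on $p$ and on the node distribution, not on $\tau$. Since the reference map $F_\tau$ is affine in each coordinate (the mesh is a tensor product of intervals), $\norm{K^N w}{L_\infty(\tau)}=\norm{\hat K(w\circ F_\tau)}{L_\infty(\hat\tau)}\le C\norm{w\circ F_\tau}{L_\infty(\hat\tau)}=C\norm{w}{L_\infty(\tau)}$, which is \eqref{eq:high:prop_stab:Lagrange}.

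\textbf{Step 2 (anisotropic error bound on $\hat\tau$).} Because $\hat K$ reproduces $\QS_p^\clubsuit(\hat\tau)\supset\PS_p(\hat\tau)$ and is $L_\infty$-stable, a Bramble--Hilbert / Deny--Lions argument on $\hat\tau$ gives, for $0\le m\le 1$, $2\le s\le p+1$ (and the analogous $t$-version for first derivatives),
\[
 \snorm{\hat w-\hat K\hat w}{W^m_q(\hat\tau)}\le C\!\!\sum_{\alpha:\,|\alpha|=s-m}\!\!\norm{D^\alpha\hat w}{L_q(\hat\tau)},
\]
and in fact, since $\hat K$ also reproduces the pure tensor monomials $\xi^i\eta^j$ with $j\le s_i$ that lie in $\QS_p^\clubsuit$ but outside $\PS_p$, one may isolate each mixed derivative $\partial^s/\partial x^{s-r}\partial y^{r}$ separately on the right-hand side. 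This anisotropic refinement is precisely what is already established for $\hat\pi$ in \cite{FrM10_1}; the same proof applies verbatim to $\hat K$ because only $L_\infty$-stability and polynomial reproduction on $\QS_p^\clubsuit(\hat\tau)$ enter, both of which hold for $\hat J$ and $\hat I$.

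\textbf{Step 3 (scaling back).} Under $F_\tau$, $\partial_\xi=\tfrac{h_i}{2}\partial_x$, $\partial_\eta=\tfrac{k_j}{2}\partial_y$, and $\norm{\cdot}{L_q(\hat\tau)}$ picks up a factor $(h_ik_j)^{-1/q}$ relative to $\norm{\cdot}{L_q(\tau_{ij})}$. Substituting into the reference estimate of Step 2 multiplies the $r$-th term by $h_i^{s-r}k_j^{r}$ (and the Jacobian factors from the norms cancel between the two sides), yielding \eqref{eq:high:prop_error_L2:Lagrange} exactly as stated; the first-derivative versions follow identically with $s\rightsquigarrow t$ and one extra $x$- or $y$-derivative.

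\textbf{Main obstacle.} The only genuinely non-routine point is Step 2: the \emph{anisotropic} (term-by-term) form of the interpolation error, rather than the isotropic Bramble--Hilbert estimate. Establishing that each $h_i^{s-r}k_j^r$-weighted mixed derivative appears separately requires a careful choice of the polynomial subtracted on $\hat\tau$ (an averaged Taylor polynomial adapted to the staircase shape of $\QS_p^\clubsuit$), exploiting that $\hat K$ reproduces exactly the monomials indexed by the $s_i$ in \eqref{eq:Lagrange:ki}. Since this is done in \cite{FrM10_1} for the general $\QS_p^\clubsuit$ under the sole hypotheses of $L_\infty$-stability and $\QS_p^\clubsuit$-reproduction, and $\hat J$, $\hat I$ satisfy both, the proof reduces to citing that construction; accordingly I would give Steps 1 and 3 explicitly and refer to \cite{FrM10_1} for the anisotropic core of Step 2.
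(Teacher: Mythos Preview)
Your proposal is correct and essentially coincides with the paper's approach: the paper's proof is simply a reference to \cite{FrM10_1} (and to \cite{Apel99} for the full $\QS_p$ case), and you arrive at the same citation after spelling out the routine reference-element stability and scaling steps that surround it. The only substantive content---the anisotropic Bramble--Hilbert argument exploiting $\QS_p^\clubsuit$-reproduction---is deferred to \cite{FrM10_1} in both cases.
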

  \begin{proof}
   The proof for arbitrary $\QS^\clubsuit_p$ can be found in \cite{FrM10_1} and
   for the full space $\QS_p$ also in e.g. \cite{Apel99}.
  \end{proof}

   There is a strong connection between $\pi^N$ and $I^N$ in the case of $\QS_p$-elements.
   Let us spend a subscript for the polynomial order $p$, i.e. we write $\pi_p^N$ and $I_p^N$
   for the interpolation operators mapping into $V^N$ with local polynomial spaces $\QS_p$.
   Then it holds the identity
   \begin{gather}\label{eq:inter:connection1}
    \pi_p^N=I_p^N\pi_{p+1}^N,
   \end{gather}
   see \cite[Lemma 3.3]{Fr12}. A direct consequence is the additional identity
   \begin{gather}\label{eq:inter:connection2}
    \pi_p^Nv=I_p^Nv
             +(\pi^N_{p+1}v-v)
             +\left(I^N_p(\pi^N_{p+1}v-v)-(\pi^N_{p+1}v-v)\right)
   \end{gather}
   for arbitrary $v\in C(\bar \Omega)$.
   It shows the distance between both interpolation operators to be proportional to terms
   of order $p+1$. The identity \eqref{eq:inter:connection1} (with the properly redefinition
   of the interpolation operators therein) does also hold for
   the Serendipity spaces $\QS_2^\oplus$ and $\QS_3^\oplus$, but not for $\QS_p^\oplus$
   with $p\geq 4$. This can be shown analogously to the proof of \cite[Lemma 3.3]{Fr12}.
   
   The reason for the failed identity lies in the definition of the interior degrees of freedom
   \eqref{eq:general:def_inter3} and \eqref{eq:hatJ:int}. For $\QS_2^\oplus$ and $\QS_3^\oplus$
   these conditions are not existent and therefore always fulfilled,
   while for higher order $p$ they do not match any more.
  
  \section{Numerical Methods}
  Let us come to the numerical methods that we will consider in the next chapter.
  
  \subsection{Galerkin FEM}\label{ssec:Galerkin}
  The first method will be the unstabilised Galerkin FEM given by:\bigskip

  Find $u_{Gal}^N\in V^N$ such that
  \begin{equation}\label{eq:Gal_form}
    a_{Gal}(u_{Gal}^N, v^N) = (f,v^N)\qquad \forall v^N\in V^N.
  \end{equation}
  This problem possesses a unique solution due to~\eqref{eq:coer_ass}. 
  Furthermore, the Galerkin orthogonality
  \begin{equation} \label{eq:Gal:ortho}
    a_{Gal}(u - u_{Gal}^N, v^N) = 0\qquad \forall v^N\in V^N
  \end{equation}
  holds true and we have coercivity
  \begin{gather}\label{eq:Gal:coer}
   a_{Gal}(v,v)\geq \enorm{v}^2,\qquad v\in H^1_0(\Omega)
  \end{gather}
  where the energy norm is defined by~\eqref{eq:norm:energy}
  \begin{gather*}
    \enorm{v} := \left(\eps \norm{\grad v}{0}^2 + \gamma \norm{v}{0}^2\right)^{1/2}.
  \end{gather*}
  
  Since the standard Galerkin discretisation lacks stability even on S-type
  meshes, see the numerical results given in~\cite{LS01c,SCX10}, we will also 
  consider stabilised methods. A survey of several different stabilised method for 
  singularly perturbed problems can be found in the book~\cite{RST08}.

  \subsection{Streamline Diffusion FEM}\label{ssec:SDFEM}

  In 1979 Hughes and Brooks~\cite{HB79} introduced the streamline-diffusion finite element
  method (SDFEM), sometimes also called streamline upwind Petrov Galerkin finite element method (SUPG-FEM). 
  This method provides highly accurate solutions outside the layers and
  good stability properties. 
  Its basic idea is to add weighted local residuals to the variational formulation, i.e. to add
  \[
   \delta_\tau(Lu-f,-bw_x)_\tau=0
  \]
  where the constant parameters $\delta_\tau=\delta_{ij}\geq 0$ for $\tau\subset\Omega_{ij}$
  are user chosen and influence both stability and convergence. 
  A slightly different approach will be used in Chapter~\ref{cha:norm}. 
  
  Defining
  \[
      a_{stabSD}(v,w):=\sum_{\tau\in T^N}\delta_\tau(\eps\laplace v+bv_x-c v,bw_x)_\tau,
                             \qquad\mbox{for all }v,\,w\in H^1_0(\Omega)
  \]
  and 
  \[
      f_{SD}(w):=(f,w)-\sum_{\tau\in T^N}\delta_\tau(f,bw_x)_\tau,
                       \quad\mbox{for all }w\in H_0^1(\Omega)
  \]
  we obtain the streamline diffusion formulation of~\eqref{eq:Lu} by:
  Find $u_{SD}^N\in V^N$ such that
  \begin{equation}\label{eq:SD_form}
    a_{SD}(u_{SD}^N,v^N):=a_{Gal}(u_{SD}^N, v^N)+a_{stabSD}(u_{SD}^N, v^N) = f_{SD}(v^N),
    \quad\mbox{for all }v^N\in V^N.
  \end{equation}

  Associated with this method is the streamline diffusion norm,
  defined by
  \begin{gather}\label{eq:norm:SD}
   \tnorm{v}_{SD}:=\left(\eps\norm{\grad v}{0}^2+\gamma\norm{v}{0}^2+
                         \sum_{\tau\in T^N}\delta_\tau\norm{bv_x}{0,\tau}^2\right)^{1/2}.
  \end{gather}
  We have Galerkin orthogonality, and for
  \begin{gather}\label{eq:delta_coer}
   0\leq\delta_\tau\leq \frac{1}{2}\min\left\{\frac{\gamma}{\norm{c}{L_\infty(\tau)}^2},
                                      \frac{h_\tau^2}{\mu^2\eps}\right\},
  \end{gather}
  where $\mu\geq 0$ is a fixed constant such that the inverse inequality
  \[
   \norm{\laplace v^N}{0,\tau}\leq \mu h_\tau^{-1}\norm{\grad v^N}{0,\tau},
   \qquad \forall v^N\in V^N,\,\tau\in T^N
  \]
  holds with $h_{\tau_{ij}}:=\min\{h_i,k_j\}$,
  we have coercivity
  \begin{gather}\label{eq:SDFEM:coer}
   a_{SD}(v,v)\geq \frac{1}{2}\tnorm{v}_{SD}^2,\qquad v\in H^1_0(\Omega).
  \end{gather}

  A disadvantage of the SDFEM are several additional terms including second order derivatives 
  that have to be assembled in order to ensure the Galerkin orthogonality of the resulting method.
  Moreover, for systems of differential equations additional coupling between different species occurs.

  \subsection{Local Projection Stabilisation FEM}\label{ssec:LPSFEM}
  
  An alternative stabilisation technique overcoming some drawbacks of the SDFEM
  is the Local Projection Stabilisation method LPSFEM. Instead of adding weighted residuals,
  only weighted fluctuations $(id-\pi)$ of the streamline derivatives are added. 
  Therein $\pi$ denotes a projection into a discontinuous 
  finite element space. 
  
  Originally the method was introduced for Stokes and transport problems~\cite{BB01,BB04},
  but also applied to the Oseen problem in~\cite{BB06,MST07}. 
  In its original definition, the local projection method was proposed as a two-level
  method, where the projection space is defined on a coarser mesh consisting of patches of
  elements~\cite{BB01,BB04,BB06}. In this case, standard finite element spaces can
  be used for both the approximation space and the projection space.   
  Based on the existence of a special interpolation operator~\cite{MST07}, the one level 
  approach using enriched spaces was constructed.
  It was shown in~\cite{MST07} that it suffices to enrich the standard $\QS_p$-element, 
  $p\ge 2$, in 2d by just two additional bubble functions of higher order. For its 
  application on layer-adapted meshes for problems with exponential
  boundary layers see~\cite{Mat09,Mat09b}. 
  
  Here we will use the one level approach without enriching the polynomial spaces.
  Let $\pi_{\tau}$ denote the $L_2$-projection into the finite dimensional
  function space $D(\tau)=\mathcal{P}_{p-2}(\tau)$. The fluctuation operator
  $\kappa_{\tau}:L_2(\tau)\to L_2(\tau)$ is defined by
  $\kappa_{\tau} v:= v - \pi_{\tau}v$.
  In order to get additional control on the derivative in streamline
  direction, we define the stabilisation term
  \[
    s(u,v) := \sum_{\tau\in T^N}\delta_{\tau}
                   \big(\kappa_{\tau}(b u_x),
                        \kappa_{\tau}(b v_x)\big)_{\tau}
  \]
  with the parameters $\delta_\tau=\delta_{ij}\geq 0$, 
  $\tau\subset\Omega_{ij}$, which will be specified later.
  It was stated in~\cite{Fr08_1,Fr08_thesis} for different stabilisation methods
  that stabilisation is best if only applied in $\Omega_{11}\cup\Omega_{21}$.
  Therefore, we set $\delta_{12}=\delta_{22}=0$ in the following.

  The stabilised bilinear form $a_{LPS}$ is defined by
  \[
    a_{LPS}(u,v) := a_{Gal}(u,v) + s(u,v),\qquad u,v\in H^1_0(\Omega),
  \]
  and the stabilised discrete problem reads:\bigskip

  Find $u_{LPS}^N\in V^N$ such that
  \begin{equation} \label{eq:LPSFEM_form}
    a_{LPS}(u_{LPS}^N,v^N) = (f,v^N)\qquad \forall v^N\in V^N.
  \end{equation}
  Associated with this bilinear form is the LPS norm
  \begin{gather}\label{eq:norm:LPS}
    \lnorm{v} := \left(\eps \norm{\grad v}{0}^2 + \gamma \norm{v}{0}^2 + s(v,v)\right)^{1/2}.
  \end{gather}
  The bilinear form is coercive w.r.t. this norm
  \begin{equation} \label{eq:LPS:coer}
    a_{LPS}(v,v)\geq \lnorm{v}^2, \qquad v\in H^1_0(\Omega).
  \end{equation}
  Moreover, the solutions $u$ of~\eqref{eq:Lu} and $u_{LPS}^N$
  of~\eqref{eq:LPSFEM_form} do not fulfil the Galerkin orthogonality, but 
  \begin{equation} \label{eq:LPS:weak_conc}
    a_{LPS}(u - u_{LPS}^N, v^N) = s(u, v^N) \qquad \forall v^N\in V^N.
  \end{equation}

  The LPSFEM gives control over the fluctuations of the streamline derivative.
  In \cite{Knob10} a slight variation of the formulation is considered and an
  inf-sup condition w.r.t. the SDFEM norm is shown on a quasi-regular mesh. 
  Thus, this LPSFEM gives control over the full streamline derivative. Whether
  such a result holds on S-type meshes is not known.
  
%
%
 \chapter{Uniform a-priori Error Estimation in Energy Norms}\label{cha:results}
 This chapter contains results from \cite{FrM10, FrM10_1, Fr11, Fr13_1} that are also given in 
 Appendix \ref{app:LPS}, \ref{app:nonstandard}, \ref{app:SDFEM} and \ref{app:phenomena}.
 All theoretical results will be accompanied by a numerical study using
 the singularly perturbed convection-diffusion problem
  \begin{subequations}\label{eq:num_example}
  \begin{alignat}{2}
    -\eps\laplace u - (2-x) u_x + \frac{3}{2} u & = f
    &\quad&\text{in }\Omega=(0,1)^2,\\
    u & = 0 && \text{on }\partial\Omega,
  \end{alignat}
  where the right-hand side $f$ is chosen such that
  \begin{gather}
    u(x,y) = \left(\cos\frac{\pi x}{2} - \frac{ \e^{-x/\eps} - \e^{-1/\eps}}%
    {1-\e^{-1/\eps}}
    \right)
    \frac{\left(1-\e^{-y/\sqrt{\eps}}\right)
    \left( 1-\e^{-(1-y)/\sqrt{\eps}} \right)}{1-\e^{-1/\sqrt{\eps}}}
  \end{gather}
  \end{subequations}
  is the solution. We will used a fixed perturbation parameter $\eps=10^{-6}$.
  Computations verifying the uniformity w.r.t. $\eps$ were also done. 
  Figure~\ref{fig:sol_1}
  \begin{figure}[btp]
    \centerline{
      \includegraphics[width=0.6\textwidth]{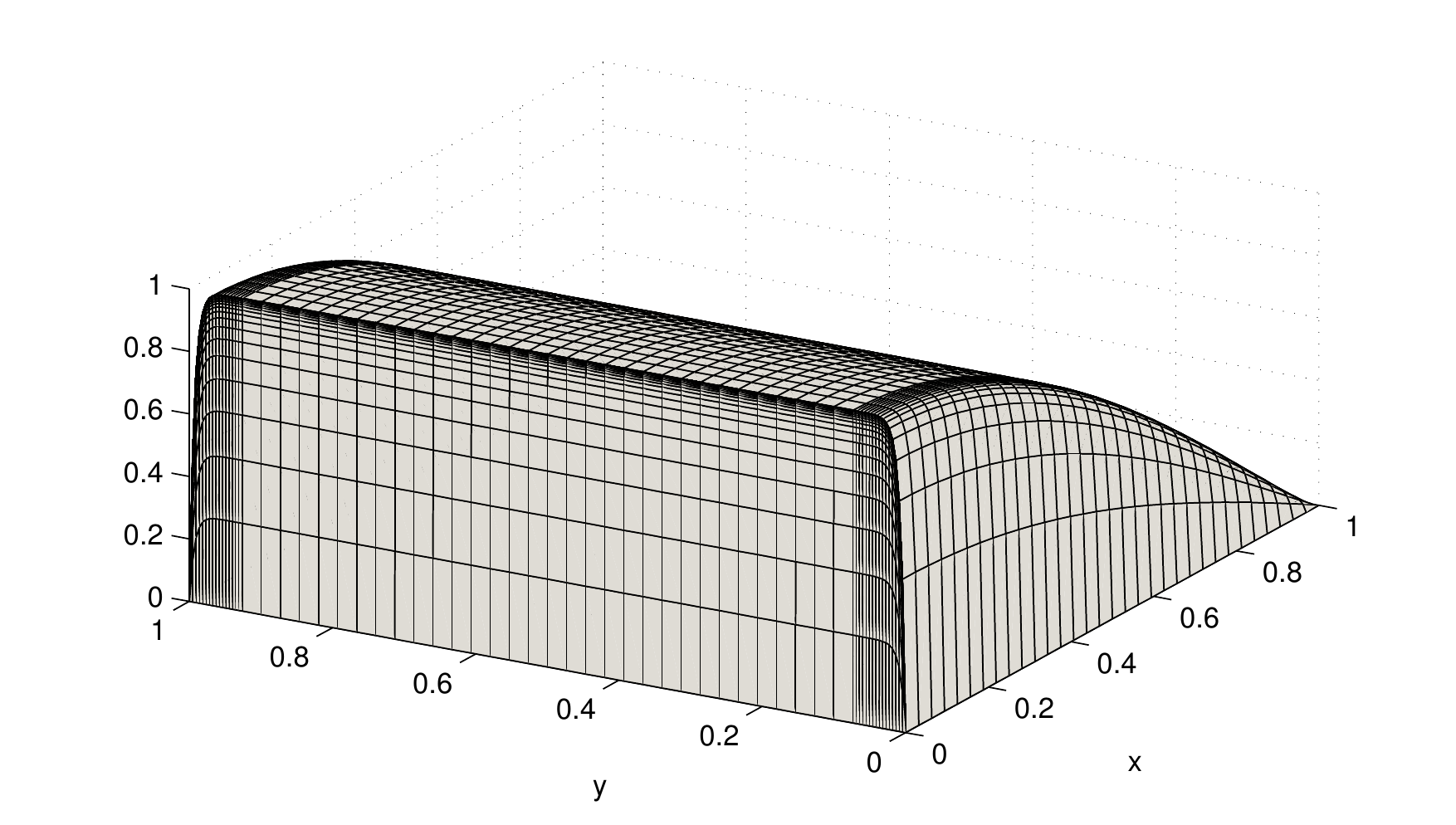}
    }
    \caption{Typical solution of~\eqref{eq:Lu} with two parabolic layers
             and an exponential layer.
             \label{fig:sol_1}}
  \end{figure}
  shows the resulting solution. For comparison,
  the energy norm of $u$ is in this case $\enorm{u}\approx 0.9975$.

 \section{Results for Galerkin FEM}\label{sec:results:GFEM}
%
 Let us start with results for the standard Galerkin FEM. In \cite{FrL08,Fr08_thesis}
 results for bilinear elements are presented. If the mesh parameter $\sigma$ fulfils
 $\sigma\geq 2$, then the convergence result of \cite{Roos02} holds
 \[
   \enorm{u-u_{Gal}^N}\leq C(N^{-1}\max|\psi'|)
 \]
 with $\max|\psi'|$ from e.g. Table~\ref{tab:S-mesh} and for
 $\sigma\geq 5/2$ the supercloseness result \cite{FrL08}
 \[
  \enorm{u^I-u_{Gal}^N}\leq C(N^{-1}\max|\psi'|)^2
 \]
  where $u^I$ denotes the nodal bilinear interpolant. In the higher-order case with
  either the full space $\QS_p$ or the serendipity space $\QS_p^\oplus$ results can be found in
  \cite{FrM10_1}. 
  
  \begin{thm}[Theorem 6 of \cite{FrM10_1}]\label{thm:Gal:conv}
%
   Let the solution $u$ of~\eqref{eq:Lu} satisfy Assumption~\ref{ass:dec} 
   and let $u_{Gal}^N$ denote the Galerkin solution of~\eqref{eq:Gal_form}.
   Then, we have for $\sigma\geq p+1$
   \begin{gather}\label{eq:high:scg}
    \enorm{u-u_{Gal}^N}\leq C\big(N^{-1}\max|\psi'|\big)^p.
   \end{gather}
  \end{thm}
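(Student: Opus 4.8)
The plan is to estimate the Galerkin error by the standard route: first bound the interpolation error $\enorm{u-\pi^N u}$ (or $\enorm{u-I^N u}$), then use coercivity and Galerkin orthogonality to transfer this bound to $\enorm{u-u_{Gal}^N}$. By \eqref{eq:Gal:coer} and \eqref{eq:Gal:ortho}, writing $\chi:=\pi^N u-u_{Gal}^N\in V^N$, one has $\enorm{\chi}^2\le a_{Gal}(\chi,\chi)=a_{Gal}(\pi^N u-u,\chi)$, so the task reduces to bounding $a_{Gal}(\pi^N u-u,\chi)$ by $C(N^{-1}\max|\psi'|)^p\,\enorm{\chi}$, after which a triangle inequality with the interpolation error finishes the argument. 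The interpolation error itself is handled termwise on the solution decomposition $u=v+w_1+w_2+w_{12}$ of Assumption~\ref{ass:dec}, using the anisotropic estimates \eqref{eq:high:prop_error_L2:Lin} with $q=2$, the mesh-size bounds \eqref{eq:hi_estimate}, \eqref{eq:kj_estimate} inside the layers, and the coarseness of the mesh outside. For the regular part $v$ one chooses $s=t=p$ and the $\ord{N^{-p}}$ bound is immediate from boundedness of derivatives and $h_i,k_j\le CN^{-1}$; for each layer term one inserts the pointwise bounds on the derivatives, carries out the (by now routine) summation of the geometric-type series over layer elements, and exploits that the layer functions are $\ord{N^{-\sigma}}\le\ord{N^{-p}}$ on $\Omega_{11}$, using $\sigma\ge p+1$.

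The two pieces of $a_{Gal}(\pi^N u-u,\chi)=\eps(\grad(\pi^N u-u),\grad\chi)-((\pi^N u-u)(b\chi)_x)+\dots$ — more precisely the diffusion term, the convection term and the reaction term — are estimated separately. The reaction and diffusion terms are controlled directly by Cauchy–Schwarz against $\enorm{\chi}$ using the interpolation error bounds above. The convection term $-(b_x(\pi^N u-u),\chi)-(b(\pi^N u-u)_x,\chi)$ is the delicate one: the naive bound $\norm{(\pi^N u-u)_x}{0}\,\norm{\chi}{0}$ loses a factor, so one integrates by parts back, $-(b(\pi^N u-u)_x,\chi)=( (b\chi)_x,\pi^N u-u)$, and then uses the $L_2$ interpolation error of $\pi^N u-u$ itself (no derivative) together with $\norm{\grad\chi}{0}\le\eps^{-1/2}\enorm{\chi}$; the resulting $\eps^{-1/2}$ is absorbed because in the exponential-layer region the interpolation error of $w_1$ carries an extra $\eps^{1/2}$ (one $x$-factor $h_i\lesssim \eps N^{-1}\max|\psi'|$) — this is exactly the place where the orthogonality-preserving structure of $\pi^N$ (the cell and edge moment conditions \eqref{eq:general:def_inter2}, \eqref{eq:general:def_inter3}) is used, so that $(q,\pi^N u-u)_\tau=0$ for $q$ of degree $\le p-2$ in the relevant variable and one gains an order.

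The main obstacle is precisely this convection-term estimate with the $\pi^N$ operator: one must combine the superconvergence (orthogonality) property of the vertex-edge-cell interpolant with the anisotropic, $\eps$-dependent mesh-size bounds so that every occurrence of a negative power of $\eps$ is compensated, uniformly over all four subregions $\Omega_{11},\Omega_{12},\Omega_{21},\Omega_{22}$ and for each of $v,w_1,w_2,w_{12}$. Because $\chi$ is only a finite-element function and not the error, one also needs an inverse inequality on $\Omega_{11}$ (legitimised by \eqref{eq:ass:s_type2}) to handle the part of $\chi$ there, and one needs \eqref{eq:ass:s_type3} so that the quantity $1+(N^{-1}\ln N)^{1/2}\max|\psi'|$ appearing in the convection estimate stays bounded. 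Once these region-by-region, component-by-component bounds are assembled — each of the form $C(N^{-1}\max|\psi'|)^p\enorm{\chi}$ — dividing by $\enorm{\chi}$ and adding the interpolation error gives \eqref{eq:high:scg}. I would refer to \cite{FrM10_1} for the full bookkeeping, as this is Theorem~6 there.
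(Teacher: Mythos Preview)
Your overall strategy is the standard one and matches what \cite{FrM10_1} does (and what the paper sketches for the SDFEM case via \cite[Theorems~12 and~13]{FrM10}): split through $\pi^N u$, invoke coercivity \eqref{eq:Gal:coer} and Galerkin orthogonality \eqref{eq:Gal:ortho}, then bound $a_{Gal}(\pi^N u-u,\chi)$ region-by-region and component-by-component using Assumption~\ref{ass:dec} and the anisotropic interpolation estimates. The handling of the convection term by integration by parts, combined with an inverse inequality where the $x$-mesh is coarse, is also the right idea.

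There is, however, a misconception in your explanation of the convection estimate. You write that the absorption of the $\eps^{-1/2}$ on $\Omega_{12}$ ``is exactly the place where the orthogonality-preserving structure of $\pi^N$ (the cell and edge moment conditions \eqref{eq:general:def_inter2}, \eqref{eq:general:def_inter3}) is used, so that $(q,\pi^N u-u)_\tau=0$ \ldots\ and one gains an order''. This is not how the convergence proof works. The moment conditions of $\pi^N$ feed into the Lin identities (Lemma~\ref{lem:Lin}) and are a \emph{supercloseness} tool; they play no role in Theorem~\ref{thm:Gal:conv}. The $\eps^{1/2}$ you need on $\Omega_{12}$ comes purely from the local mesh width $h_i\le C\eps$ (equivalently, from $\meas(\Omega_{12})=\ord{\eps\ln N}$), which you in fact state correctly in the same sentence. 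Indeed, Theorem~\ref{thm:Gal:conv} holds for both $\QS_p$ and the Serendipity space $\QS_p^\oplus$, and for the latter the interior moment conditions \eqref{eq:general:def_inter3} are essentially empty for small $p$; any interpolant with the anisotropic error bounds \eqref{eq:high:prop_error_L2:Lin} or \eqref{eq:high:prop_error_L2:Lagrange} suffices. A smaller point: assumption \eqref{eq:ass:s_type2} legitimises inverse inequalities in the \emph{layer} region (where the fine mesh could otherwise degenerate), not on $\Omega_{11}$, where the mesh is quasi-uniform with width $\sim N^{-1}$ by construction and the inverse inequality is automatic.
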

  Thus, similar to the bilinear case, we achieve convergence of order $p$ in 
  the energy norm. To our knowledge, no supercloseness result is available in 
  literature in the higher-order case. Nevertheless, it can be observed numerically
  for the full space $\QS_p$.

  Let us come to the numerical example ~\eqref{eq:num_example}. We will use a Bakhvalov-S-mesh,
  as here $|\max\psi'|$ is bounded by a constant, see Table~\ref{tab:S-mesh}, and the convergence
  rates can be observed easiest. According to Theorem~\ref{thm:Gal:conv}
  we expect
  \[
   \enorm{u-u_{Gal}^N}\leq CN^{-p}.
  \]
  Table~\ref{tab:GFEM:conv}
  \begin{table}[bp]
   \begin{center}
    \caption{Convergence errors of Galerkin FEM for $\QS_p$- and $\QS_p^\oplus$-elements, and $p=4,\,5$\label{tab:GFEM:conv}}
    \begin{tabular}{r|ll|ll|ll|ll}
     \multicolumn{1}{c}{}& \multicolumn{8}{c}{$\enorm{u-u_{Gal}^N}$}\\
     \rule{0pt}{1.1em}$N$
         & \multicolumn{2}{c}{$\QS_4$}
         & \multicolumn{2}{c|}{$\QS_4^\oplus$}   
         & \multicolumn{2}{c}{$\QS_5$}
         & \multicolumn{2}{c}{$\QS_5^\oplus$}   \\[2pt]
     \hline\rule{0pt}{1.1em}
       8 & 6.633e-04 & 3.65 & 1.469e-03 & 3.68 & 1.330e-04 & 4.59 & 5.002e-04 & 4.53\\
      16 & 5.274e-05 & 3.83 & 1.147e-04 & 3.87 & 5.506e-06 & 4.79 & 2.160e-05 & 4.81\\
      32 & 3.715e-06 & 3.91 & 7.857e-06 & 3.94 & 1.985e-07 & 4.89 & 7.722e-07 & 4.92\\
      64 & 2.467e-07 & 3.96 & 5.106e-07 & 3.97 & 6.682e-09 & 4.95 & 2.553e-08 & 4.94\\
     128 & 1.590e-08 & 3.98 & 3.248e-08 & 3.99 & 2.169e-10 & 4.72 & 8.319e-10 & 0.12\\
     256 & 1.009e-09 & 3.98 & 2.046e-09 & 3.99 & 8.216e-12 &      & 7.644e-10 &     \\
     320 & 4.148e-10 &      & 8.396e-10 &      &           &      &           &
    \end{tabular}
   \end{center}
  \end{table}
  confirms our expectation. In this table the errors and their estimated orders
  of convergence are given for $\sigma=p+3/2$. We see for the spaces $\QS_4$ and $\QS_4^\oplus$ a 
  convergence of order four, while for the spaces $\QS_5$ and $\QS_5^\oplus$ we 
  obtain order five. Moreover, the switch from the full space to Serendipity-space 
  does increase the error only by a factor of two for $p=4$ and four for $p=5$.
  Thus the error is increased, but at the same time only 
  about half the number of degrees of freedom are used.

  Let us also look at supercloseness. Although no analytical result is given, 
  Table~\ref{tab:GFEM:super}
  \begin{table}[tbp]
   \begin{center}
    \caption{Supercloseness property of Galerkin FEM for $p=5$\label{tab:GFEM:super}}
    \begin{tabular}{r|lr|lr|lr|lr}
     \multicolumn{1}{c}{}& \multicolumn{6}{c}{$\QS_5$}&\multicolumn{2}{c}{$\QS_5^\oplus$}\\
     \rule{0pt}{1.1em}$N$
         & \multicolumn{2}{c}{$\enorm{\pi^Nu-u_{Gal}^N}$}
         & \multicolumn{2}{c}{$\enorm{I^Nu-u_{Gal}^N}$}   
         & \multicolumn{2}{c|}{$\enorm{J^Nu-u_{Gal}^N}$}
         & \multicolumn{2}{c}{$\enorm{\pi^Nu-u_{Gal}^N}$}\\[2pt]
     \hline\rule{0pt}{1.1em}
       8 & 3.026e-05 & 5.48  & 3.408e-05 & 5.41  & 9.474e-05 & 4.60 & 2.825e-04 & 4.37 \\
      16 & 6.765e-07 & 5.80  & 8.003e-07 & 5.74  & 3.894e-06 & 4.79 & 1.366e-05 & 4.68 \\
      32 & 1.213e-08 & 5.92  & 1.496e-08 & 5.88  & 1.406e-07 & 4.89 & 5.314e-07 & 4.83 \\
      64 & 1.999e-10 & 5.87  & 2.537e-10 & 5.88  & 4.736e-09 & 4.94 & 1.871e-08 & 4.86 \\
     128 & 3.428e-12 & -0.38 & 4.320e-12 & -0.04 & 1.538e-10 & 4.55 & 6.423e-10 & -0.25\\
     256 & 4.461e-12 &       & 4.442e-12 &       & 6.580e-12 &      & 7.642e-10 &      
    \end{tabular}
   \end{center}
  \end{table}
  shows for $p=5$ a supercloseness property of order $p+1$ for the Galerkin FEM with $\QS_p$-elements
  and the two interpolation operators $\pi^N$ (vertex-edge-cell interpolation) and $I^N$ 
  (Gauß-Lobatto interpolation). No such property is evident for $J^N$
  (equidistant Lagrange interpolation) or the Serendipity-elements. 
  
  For other polynomial degrees similar tables and conclusions can be given and are therefore omitted.
  We come back to the behaviour of $\pi^N$ and $I^N$ in the next section.
  
\section{Results for SDFEM}\label{sec:results:SDFEM}
  One of the most popular stabilisation methods is the SDFEM. This method can also 
  be used in connection with the general higher-order elements.
  Under certain restrictions on the stabilisation parameters
  convergence of order $p$ can be proved.
  
%
  \begin{thm}[Theorem 8 of \cite{Fr11}]\label{thm:SDFEM:conv}
%
   Let
   \begin{gather*}
    \delta_{11}\leq C,\quad
    \delta_{21}\leq C\max\{1,\eps^{-1/2}(N^{-1}\max|\psi'|)^{2/3}\}(N^{-1}\max|\psi'|)^{4/3},\quad
    \delta_{12}=\delta_{22}=0,
   \end{gather*}
   and \eqref{eq:delta_coer} be satisfied.
   Let $u$ be the solution of \eqref{eq:Lu} fulfilling Assumption~\ref{ass:dec}
   and $u^N_{SD}$ be the streamline diffusion solution of \eqref{eq:SD_form}.
   Then it holds for $\sigma\geq p+1$ that
   \[
    \tnorm{u-u^N_{SD}}_{\eps}\leq C(N^{-1}\max|\psi'|)^p.
   \]
  \end{thm}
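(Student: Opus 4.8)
The plan is to follow the standard coercivity-plus-interpolation route for stabilised FEM on layer-adapted meshes, splitting the error via a suitable interpolant. First I would write $u - u^N_{SD} = (u - \pi^N u) + (\pi^N u - u^N_{SD})$, where $\pi^N$ is the vertex-edge-cell interpolant (one could equally use $I^N$); call $\eta := u - \pi^N u$ and $\chi^N := \pi^N u - u^N_{SD} \in V^N$. The first summand $\eta$ is controlled directly in the $\sdnorm{\cdot}$-norm by the anisotropic interpolation estimates \eqref{eq:high:prop_error_L2:Lin} applied to each component $v, w_1, w_2, w_{12}$ of the decomposition in Assumption~\ref{ass:dec}, together with the mesh-size bounds \eqref{eq:hi_estimate}, \eqref{eq:kj_estimate}, \eqref{eq:ass:hk} and the transition-point choice \eqref{eq:ass:trans} with $\sigma \ge p+1$; this is the routine (if lengthy) bookkeeping that already appears in the Galerkin analysis behind Theorem~\ref{thm:Gal:conv}, the only new feature being the streamline term $\sum_\tau \delta_\tau \norm{b\eta_x}{0,\tau}^2$, which is harmless because $\delta_{11}\le C$ and $\delta_{21}$ is taken small, and because $\eta_x$ is already $\ord{(N^{-1}\max|\psi'|)^p}$ on $\Omega_{11}$ while on $\Omega_{12}\cup\Omega_{22}$ one has $\delta=0$.

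Next I would bound $\chi^N$. By coercivity \eqref{eq:SDFEM:coer}, $\half \sdnorm{\chi^N}^2 \le a_{SD}(\chi^N,\chi^N)$, and by Galerkin orthogonality for the SDFEM, $a_{SD}(\chi^N, \chi^N) = a_{SD}(\pi^N u - u, \chi^N) = a_{SD}(-\eta, \chi^N)$. So the whole task reduces to estimating $|a_{SD}(\eta, \chi^N)|$ by (something) $\times \sdnorm{\chi^N}$ plus (something) $\times\sdnorm{\chi^N}$-absorbable terms, where the ``something'' is $\ord{(N^{-1}\max|\psi'|)^p}$. I would expand
\[
  a_{SD}(\eta,\chi^N) = \eps(\grad\eta,\grad\chi^N) - (b\eta_x,\chi^N) + ((c+b_x)\eta,\chi^N)
    + \sum_{\tau}\delta_\tau(\eps\laplace\eta + b\eta_x - c\eta,\, b\chi^N_x)_\tau,
\]
(using integration by parts on the convective term so that $\eta_x$ does not fall on $\chi^N$ where inconvenient, picking up a boundary term that vanishes since $\eta = 0$ on $\partial\Omega$). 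The diffusive term gives $\eps^{1/2}\norm{\grad\eta}{0}\cdot\eps^{1/2}\norm{\grad\chi^N}{0} \le C(N^{-1}\max|\psi'|)^p \sdnorm{\chi^N}$; the zero-order term is immediate by Cauchy--Schwarz in $L_2$ and $\norm{\eta}{0} = \ord{(N^{-1}\max|\psi'|)^{p+1}}$. The two terms requiring care are the convective term $(b\eta_x, \chi^N)$ and the stabilisation term.

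For the convective term I would, region by region, play the $\norm{b\eta_x}{0,\tau}$ factor against the streamline part $\delta_\tau^{1/2}\norm{b\chi^N_x}{0,\tau}$ of $\sdnorm{\chi^N}$ wherever $\delta_\tau>0$ is large enough (i.e. on $\Omega_{11}$ and the crucial subregion of $\Omega_{21}$ where the characteristic layer lives), writing $(b\eta_x,\chi^N)_\tau = \delta_\tau^{-1/2}(b\eta_x,\,\delta_\tau^{1/2}\chi^N)_\tau$ is not quite it — rather I would integrate by parts once more on $\chi^N$ locally or use $L_1$--$L_\infty$ duality with the $L_\infty$-stability \eqref{eq:high:prop_stab:Lin} of $\pi^N$, as in \cite{Fr11}; where $\delta_\tau = 0$ (on $\Omega_{12}, \Omega_{22}$), the exponential-layer contributions to $\eta_x$ must instead be absorbed using the $\eps^{-1}$-type bounds on $w_1, w_{12}$ compensated by the exponentially small mesh factors $N^{-\sigma}$-type or the $L_1$-smallness of $\eta$ there. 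The stabilisation term is handled by $|(\eps\laplace\eta + b\eta_x - c\eta, b\chi^N_x)_\tau| \le \delta_\tau^{1/2}\norm{\eps\laplace\eta + b\eta_x - c\eta}{0,\tau}\cdot \delta_\tau^{1/2}\norm{b\chi^N_x}{0,\tau}$, and $\delta_\tau^{1/2}\norm{\eps\laplace\eta + b\eta_x - c\eta}{0,\tau}$ is shown to be $\ord{(N^{-1}\max|\psi'|)^p}$ precisely by the stated bound $\delta_{21}\le C\max\{1,\eps^{-1/2}(N^{-1}\max|\psi'|)^{2/3}\}(N^{-1}\max|\psi'|)^{4/3}$, which is calibrated so that $\delta_{21}^{1/2}\eps^{1/2}\cdot(\text{size of }\laplace\eta) $ and $\delta_{21}^{1/2}\cdot(\text{size of }\eta_x)$ on the characteristic-layer elements both come out at order $p$. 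The main obstacle is exactly this streamline/convective bookkeeping on $\Omega_{21}$: one must track the anisotropic derivatives of $w_2$ and $w_{12}$ (which carry the bad $\eps^{-j/2}$ powers), combine them with $h_i, k_j$ from \eqref{eq:hi_estimate}--\eqref{eq:kj_estimate}, and verify that the particular exponent $4/3$ (and the switch at $\eps^{1/2} \sim (N^{-1}\max|\psi'|)^{2/3}$) is the sharp threshold making everything balance to $(N^{-1}\max|\psi'|)^p$ — this is the heart of Theorem 8 of \cite{Fr11} and the rest is assembly. Finally, combining the $\eta$-bound and the $\chi^N$-bound via the triangle inequality in $\sdnorm{\cdot}$, and noting $\enorm{v}\le\sdnorm{v}$, yields $\sdnorm{u - u^N_{SD}} \le C(N^{-1}\max|\psi'|)^p$, hence the claimed estimate.
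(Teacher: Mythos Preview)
Your overall route is exactly the paper's: split via $\pi^N$, invoke coercivity \eqref{eq:SDFEM:coer} and Galerkin orthogonality, write $a_{SD}=a_{Gal}+a_{stabSD}$, and control each piece against $\sdnorm{\chi}$, with the Galerkin part covered by the existing Theorem~13 of \cite{FrM10} and the interpolation error by Theorem~12 there. So the structure is right and nothing you propose is wrong in spirit.

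The one place your sketch does not actually deliver the stated theorem is the stabilisation bound on $\Omega_{21}$. A plain Cauchy--Schwarz with $\delta_{21}^{1/2}$ going to each factor yields, for the dominant $\eps\eta_{yy}$ contribution of the characteristic-layer part $w_2$, at best the bound $\delta_{21}^{1/2}\eps^{1/4}(N^{-1}\max|\psi'|)^{p-1}$; forcing this to be of order $p$ gives only $\delta_{21}\le C\eps^{-1/2}(N^{-1}\max|\psi'|)^{2}$, which is the \emph{second} branch of the stated $\max$. The paper's key estimate \eqref{eq:SDFEM:stabest} is sharper: it provides the alternative
\[
  \min\bigl\{\delta_{21}^{1/2}\eps^{1/4},\ \delta_{21}^{3/4}\bigr\}(N^{-1}\max|\psi'|)^{p-1},
\]
and it is precisely the $\delta_{21}^{3/4}$ option --- not a consequence of your $\delta^{1/2}$-split --- that produces the first branch $\delta_{21}\le C(N^{-1}\max|\psi'|)^{4/3}$ (this is exactly where the exponent $4/3$ comes from, since $\delta_{21}^{3/4}\le CN^{-1}\max|\psi'|$). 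Obtaining that second option requires a separate argument on $\Omega_{21}$ (Lemma~6 of \cite{Fr11}), not merely the Cauchy--Schwarz you describe; your assertion that the two $\delta_{21}^{1/2}$-weighted terms ``both come out at order $p$'' under the stated hypothesis is therefore only valid in the regime $\eps\le (N^{-1}\max|\psi'|)^{4/3}$, and your argument as written would prove the theorem for a strictly smaller range of $\delta_{21}$ than claimed.
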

  \begin{proof}
   For the standard Shishkin mesh the proof is given in \cite[Theorem 8]{Fr11} based mainly on
   Lemma 6 therein. For the Bakhvalov S-mesh the result is stated in~\cite{Fr13_1}.
   The proof for a general S-type mesh can be done in a very similar way to \cite{Fr11}
   and one obtains
   \begin{align}\label{eq:SDFEM:stabest}
    a_{stabSD}(u-\pi^N u,\chi)
     \leq C \big[
              \delta_{11}^{1/2}&N^{-p}
             + \delta_{12}\eps^{-1}(N^{-1}\max|\psi'|)^{p-1}\\
             +&\min\{\delta_{21}^{1/2}\eps^{1/4},\delta_{21}^{3/4}\}(N^{-1}\max|\psi'|)^{p-1}\notag\\
             +&\min\{\delta_{22}\eps^{-3/4},\delta_{22}^{1/2}\eps^{-1/4}\}(\ln N)^{1/2}(N^{-1}\max|\psi'|)^{p-1}\notag
            \big]\tnorm{\chi}_{SD}
   \end{align}
   which together with the result for the Galerkin bilinear form \cite[Theorem 13]{FrM10}, 
   coercivity \eqref{eq:SDFEM:coer} and the interpolation error \cite[Theorem 12]{FrM10} gives above theorem.   
  \end{proof}

  It can be seen quite nicely, that $|a_{stabSD}(u-\pi^Nu,\chi)|$ becomes smaller, if the
  stabilisation parameters are reduced. But there is also an interaction between the Galerkin
  bilinear form $a_{Gal}(u-\pi^Nu,\chi)$ and the SDFEM norm, that can be exploited to prove 
  supercloseness. In order to do so, we will need an extension of Assumption~\ref{ass:dec} 
  on the solution decomposition.
  
  \begin{ass}\label{ass:dec_ext}
   Let the solution $u$ of~\eqref{eq:Lu} be decomposable according to Assumption~\ref{ass:dec}
   into
   \begin{gather*}
    u =v+w_1+w_2+w_{12}.
   \end{gather*}
   In addition to the pointwise bounds for $i+j\leq p+1$ stated in Assumption~\ref{ass:dec}
   we assume the $L_2$-norm bounds
   \begin{align*}
    \bignorm{\frac{\partial^{p+2}v}{\partial x^i\partial y^j}}{0}
     &\leq C,&
    \bignorm{\frac{\partial^{p+2}w_1}{\partial x^i\partial y^j}}{0}
     &\leq C\eps^{-i+1/2},\\
    \bignorm{\frac{\partial^{p+2}w_2}{\partial x^i\partial y^j}}{0}
     &\leq C\eps^{-j/2+1/4},&
    \bignorm{\frac{\partial^{p+2}w_{12}}{\partial x^i\partial y^j}}{0}
     &\leq C\eps^{-i-j/2+3/4}
   \end{align*}
   for $i+j=p+2$ with either $i=1$ or $j=1$.
  \end{ass}
  Having this additional smoothness, the integral identities by Lin,
  see~\cite{ST08, Lin91, Zhang03} can be used. Here we cite \cite[Lemma 4]{ST08}.
  \begin{lem}\label{lem:Lin}
    Let $w\in H^{p+2}(\tau_{ij})$. Then for each $\chi\in\QS_p(\tau_{ij})$ we have
    \begin{align*}
     \left|\left((\pi^Nw-w)_x,\chi_x\right)_{\tau_{ij}}\right|
      &\leq C \bignorm{k_j^{p+1}\frac{\partial^{p+2}w}{\partial x\partial y^{p+1}}}
                               {0,\tau_{ij}}
                       \norm{\chi_x}{0,\tau_{ij}}\\
     \mbox{and}\hspace*{1cm}
     \left|\left((\pi^Nw-w)_y,\chi_y\right)_{\tau_{ij}}\right|
      &\leq C \bignorm{h_i^{p+1}\frac{\partial^{p+2}w}{\partial x^{p+1}\partial y}}
                               {0,\tau_{ij}}
                       \norm{\chi_y}{0,\tau_{ij}}.\hspace*{1.5cm}~
    \end{align*}
  \end{lem}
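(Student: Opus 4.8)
\emph{Proof idea.} The plan is to exploit that, on a $\QS_p$–element, the vertex–edge–cell interpolation operator $\pi^N$ is a genuine tensor product. On the rectangle $\tau_{ij}$, since its cell space is $\widetilde{\QS}=\QS_{p-2}=\PS_{p-2}\otimes\PS_{p-2}$, the operator factorises as $\pi^N=\pi_x\circ\pi_y=\pi_y\circ\pi_x$, where $\pi_x$ (resp.\ $\pi_y$) denotes the one–dimensional degree–$p$ ``integral'' interpolation applied in the $x$– (resp.\ $y$–) variable, i.e.\ matching the two endpoint values and all $\PS_{p-2}$–moments along each mesh line in that direction. (This tensor structure is precisely why the lemma is restricted to the full space and does not carry over to $\QS_p^\oplus$ with $p\ge4$.) Writing
\[
 \pi^N-\mathrm{id}=(\pi_x-\mathrm{id})+(\pi_y-\mathrm{id})+(\pi_x-\mathrm{id})(\pi_y-\mathrm{id}),
\]
I would split $\bigl((\pi^Nw-w)_x,\chi_x\bigr)_{\tau_{ij}}$ into three contributions accordingly; since $w\in H^{p+2}(\tau_{ij})$ embeds into $C(\overline{\tau_{ij}})$ in two dimensions, all point and moment conditions are meaningful.

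The first and third contributions vanish \emph{identically}. For the first, $\partial_x(\pi_x-\mathrm{id})w$ is, for each fixed $y$, the $x$–derivative of a one–dimensional interpolation error; integrating by parts over $[x_{i-1},x_i]$ the boundary terms drop because $\pi_x$ reproduces the endpoint values, and $\int_{x_{i-1}}^{x_i}(\pi_xw-w)\,\chi_{xx}\,dx=0$ because $\chi_{xx}(\cdot,y)\in\PS_{p-2}$ in $x$ while $\pi_x$ preserves $\PS_{p-2}$–moments; integrating in $y$ then gives $0$. The third contribution is the very same computation with $w$ replaced by $g:=(\pi_y-\mathrm{id})w$, which is again continuous and differentiable in $x$ with $g_x=(\pi_y-\mathrm{id})w_x$, so it too equals $0$.

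Hence only the middle contribution survives. Since $\pi_y$ acts solely on the $y$–dependence, it commutes with $\partial_x$, so $\partial_x(\pi_y-\mathrm{id})w=(\pi_y-\mathrm{id})w_x$; Cauchy--Schwarz followed by the standard anisotropic one–dimensional interpolation error estimate in $y$ (Bramble--Hilbert plus scaling on $[y_{j-1},y_j]$, using that $\pi_y$ reproduces $\PS_p$) then yields
\begin{align*}
 \bigl|\bigl((\pi_y-\mathrm{id})w_x,\chi_x\bigr)_{\tau_{ij}}\bigr|
  &\le \bignorm{(\pi_y-\mathrm{id})w_x}{0,\tau_{ij}}\,\norm{\chi_x}{0,\tau_{ij}}\\
  &\le C\bignorm{k_j^{p+1}\frac{\partial^{p+2}w}{\partial x\,\partial y^{p+1}}}{0,\tau_{ij}}\,\norm{\chi_x}{0,\tau_{ij}},
\end{align*}
which is the first inequality; the second one follows verbatim after exchanging the roles of $x$ and $y$ (and of $h_i$ and $k_j$). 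The only genuinely delicate point is the tensor factorisation $\pi^N=\pi_x\circ\pi_y$ together with the realisation that it forces two of the three terms to cancel \emph{exactly}; everything else is routine one–dimensional bookkeeping. A coordinate–free alternative would be to pull back to $\hat\tau$ and expand in tensorised Legendre polynomials, but the factorisation argument keeps the cancellation most transparent.
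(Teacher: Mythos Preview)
Your argument is correct. The tensor factorisation $\pi^N=\pi_x\circ\pi_y$ on the full $\QS_p$ element is valid (the vertex, edge, and cell conditions decouple into the product of the two one-dimensional endpoint-plus-moment interpolants), and the three-term splitting together with integration by parts does kill the first and third contributions exactly, leaving only the one-dimensional $y$-interpolation error of $w_x$. The regularity bookkeeping is fine: $w\in H^{p+2}$ in two dimensions gives $w_x\in H^{p+1}\hookrightarrow C$, so the point values needed for $\pi_y w_x$ make sense and the commutation $\partial_x\pi_y=\pi_y\partial_x$ is justified.

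The paper itself does not prove this lemma; it simply quotes it as \cite[Lemma~4]{ST08}, attributing the result to the Lin integral identities \cite{Lin91,Zhang03}. The classical proofs in that tradition typically pull back to the reference element, expand in tensorised Legendre polynomials, and exploit explicit orthogonality relations to produce the cancellation --- essentially the ``coordinate-free alternative'' you mention at the end. Your factorisation argument reaches the same conclusion by a shorter and more structural route: the cancellation is seen as a direct consequence of the moment conditions defining $\pi_x$, without ever writing down a basis. The Legendre approach has the advantage of yielding explicit constants and sometimes sharper variants (e.g.\ identities rather than inequalities), while your approach makes transparent \emph{why} the full space $\QS_p$ is essential and the Serendipity space fails --- precisely because $\widetilde{\QS}=\QS_{p-2}$ is needed for the tensor splitting.
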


  A different approach was used in \cite{DL06,DLP13}. Therein a method attributed
  to Zl\'{a}mal \cite{Zlamal78} is applied by adding and subtracting a certain
  higher-order polynomial and using its approximation properties. Although only done
  for bilinear finite elements, it seems plausible that a similar technique might work
  in the higher-order case.
  
  Note that identities like those given in Lemma~\ref{lem:Lin} do not hold for proper subspaces
  $\QS_p^\clubsuit\subset\QS_p$. Therefore, they cannot be used to prove a supercloseness property for
  spaces like the Serendipity space. This is not a real drawback, as for proper subspaces no
  supercloseness property is observed numerically.

  Under above assumptions, \cite{Fr11} gives a supercloseness result for the SDFEM method.

  \begin{thm}[Theorem 13 of \cite{Fr11}]\label{thm:SDFEM:superclose}
%
   For $\QS_p^{\clubsuit}=\QS_p$, $\sigma\geq p+1$
   \[
    \delta_{11}=C N^{-1},\quad
    \delta_{21}\leq C\max\{1,\eps^{-1/2}(N^{-1}\max|\psi'|)\}(N^{-1}\max|\psi'|)^{2},\quad
    \delta_{12}=\delta_{22}=0
   \]
   and \eqref{eq:delta_coer} we have
   \[
    \tnorm{\pi^N u-u^N_{SD}}_{SD}\leq C (N^{-1}\max |\psi'|)^{p+1/2}(\max|\psi'|\ln N)^{1/2}.
   \]
  \end{thm}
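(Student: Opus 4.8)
The proof follows the standard supercloseness template: write $\pi^N u - u^N_{SD} =: \chi \in V^N$, use coercivity \eqref{eq:SDFEM:coer} to get $\frac12\sdnorm{\chi}^2 \le a_{SD}(\pi^N u - u^N_{SD},\chi) = a_{SD}(\pi^N u - u,\chi)$ via Galerkin orthogonality, and then bound $a_{SD}(\pi^N u - u,\chi) = a_{Gal}(\pi^N u - u,\chi) + a_{stabSD}(\pi^N u - u,\chi)$ term by term. The stabilisation part is already controlled: plugging the chosen $\delta_{ij}$ into \eqref{eq:SDFEM:stabest} gives a bound of the desired order $(N^{-1}\max|\psi'|)^{p+1/2}(\max|\psi'|\ln N)^{1/2}$ times $\sdnorm{\chi}$, so the real work is the Galerkin bilinear form. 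The plan is to split $a_{Gal}(\pi^N u - u,\chi)$ into its diffusion part $\eps(\grad(\pi^N u - u),\grad\chi)$, the convection part $-(b(\pi^N u - u)_x,\chi)$, and the reaction part $(c(\pi^N u - u),\chi)$, and to estimate each over the mesh subregions $\Omega_{11},\Omega_{12},\Omega_{21},\Omega_{22}$ separately, using the solution decomposition $u = v+w_1+w_2+w_{12}$ from Assumption~\ref{ass:dec} and the extra $L_2$-bounds on the $(p+2)$-nd derivatives from Assumption~\ref{ass:dec_ext}.

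The key device for the convection and diffusion terms is Lemma~\ref{lem:Lin} (Lin's integral identities): instead of bounding $\norm{(\pi^N u - u)_x}{0,\tau}$ crudely by the anisotropic interpolation estimate \eqref{eq:high:prop_error_L2:Lin} (which only gives order $p$), one integrates by parts on each element against $\chi_x\in\QS_p(\tau)$ and gains a full power, obtaining terms like $\norm{k_j^{p+1}\partial_x\partial_y^{p+1}w}{0,\tau}\,\norm{\chi_x}{0,\tau}$. This is exactly where $\QS_p^\clubsuit = \QS_p$ is needed — the identity fails on proper subspaces. For the reaction term $(c(\pi^N u - u),\chi)$ one uses $\norm{\chi}{0}\le\gamma^{-1/2}\sdnorm{\chi}$ together with the $L_2$-interpolation estimate on $\pi^N u - u$, which already has order $p+1$ from \eqref{eq:high:prop_error_L2:Lin} with $s=p+1$. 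After inserting the decomposition, on $\Omega_{11}$ the regular part $v$ uses its uniform derivative bounds and the $O(N^{-1})$ mesh width, while the layer parts $w_1,w_2,w_{12}$ are exponentially small ($O(N^{-\sigma})$, harmless since $\sigma\ge p+1$); on the layer strips one uses the anisotropic bounds \eqref{eq:hi_estimate}, \eqref{eq:kj_estimate} together with the pointwise layer-derivative bounds in \eqref{eq:dec:C0} and the assumed ordering $h\le k\le N^{-1}\max|\psi'|$ from \eqref{eq:ass:hk}. The streamline-diffusion term in the norm provides the crucial extra control: on $\Omega_{11}$ the convection contribution $(b(\pi^N u - u)_x,\chi_x)$-type terms can partly be absorbed using $\delta_{11}^{1/2}\norm{b\chi_x}{0,\tau}\le\sdnorm{\chi}$ with $\delta_{11}=CN^{-1}$, which is what upgrades the order from $p$ to $p+1/2$.

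The structure of the argument is thus: (i) coercivity + Galerkin orthogonality reduce to estimating $a_{SD}(\pi^N u - u,\chi)$; (ii) the stabilisation part is read off from \eqref{eq:SDFEM:stabest} with the given $\delta_{ij}$; (iii) the Galerkin part is split into diffusion/convection/reaction and, within each, into the four subregions and the four solution components, estimated using Lemma~\ref{lem:Lin} for the first-derivative terms, the anisotropic interpolation estimates for the rest, the mesh-width bounds \eqref{eq:hi_estimate}–\eqref{eq:ass:hk}, and the decomposition bounds; (iv) collect everything against $\sdnorm{\chi}$ and divide. The main obstacle is the bookkeeping in step (iii): one has to track, for each of the roughly $3\times4\times4$ combinations, how many factors of $\eps$, $N^{-1}\max|\psi'|$, and $\ln N$ appear, verify that the mesh widths $h_i,k_j$ raised to the power $p+1$ exactly cancel the negative powers of $\eps$ coming from the layer-derivative bounds (e.g. $k_j^{p+1}\cdot\eps^{-(p+1)/2}\sim (N^{-1}\max|\psi'|)^{p+1}$ up to the $e^{\cdot/\sqrt\eps}$ factors, which are integrated against the exponentially decaying layer to give an extra $\eps^{1/4}$ or $\sqrt{\eps}$), and confirm that the worst term over all combinations is precisely $(N^{-1}\max|\psi'|)^{p+1/2}(\max|\psi'|\ln N)^{1/2}$ — in particular checking that the $\Omega_{21}$ contribution, weighted by $\delta_{21}$, does not exceed this. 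Since this is essentially the higher-order analogue of the bilinear computation in \cite{FrL08,Fr08_thesis}, adapted with the $\QS_p$-version of Lin's identities, the individual estimates are routine but their careful assembly is the substance of the proof.
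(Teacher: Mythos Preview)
Your plan matches the paper's approach: coercivity plus Galerkin orthogonality, the stabilisation part read off from \eqref{eq:SDFEM:stabest}, and the Galerkin part handled by subregion/component decomposition with Lin's identities and the streamline-diffusion norm. One clarification worth making: Lemma~\ref{lem:Lin} applies only to the \emph{diffusion} term $\eps(\grad(\pi^Nu-u),\grad\chi)$; the convective term $-(b(\pi^Nu-u)_x,\chi)$ is instead integrated by parts over $\Omega$ to $(\pi^Nu-u,(b\chi)_x)$, and on $\Omega_{11}$ one bounds $|(\pi^Nu-u,b\chi_x)_{\Omega_{11}}|\le\norm{\pi^Nu-u}{0,\Omega_{11}}\norm{b\chi_x}{0,\Omega_{11}}\le CN^{-(p+1)}\delta_{11}^{-1/2}\sdnorm{\chi}$, which with $\delta_{11}\sim N^{-1}$ gives the $N^{-(p+1/2)}$ contribution --- this is the paper's ``main step'' leading to \eqref{eq:SDFEM:super:conv}, and the expression ``$(b(\pi^Nu-u)_x,\chi_x)$'' in your write-up does not actually arise.
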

  \begin{proof}
   In \cite{Fr11} the proof for the standard Shishkin mesh can be found. The adaptation
   to general S-type meshes is straight-forward. The proof itself is based on the idea to
   estimate parts of the convective term of $a_{Gal}(\cdot,\cdot)$ by the SDFEM norm instead 
   of the energy norm, see \cite{ST08}. To be more precise, it's main step is
   \begin{align*}
    |(\pi^Nu-u,b\chi_x)_{\Omega_{11}}|
       &\leq C \norm{\pi^Nu-u}{0,\Omega_{11}}\norm{b\chi_x}{0,\Omega_{11}}\\
       &\leq C N^{-(p+1)}\norm{b\chi_x}{0,\Omega_{11}}\\
       &\leq C \min\{\eps^{-1/2},\delta_{11}^{-1/2}\}N^{-(p+1)}\tnorm{\chi}_{SD}
   \end{align*}
   that leads to
   \begin{multline}\label{eq:SDFEM:super:conv}
       |((\pi^Nu-u),b\chi_x)|\leq C \bigg(\min\{\eps^{-1/2},\delta_{11}^{-1/2}\}N^{-(p+1)}+\\
                                  (1+\min\{\delta_{21}^{-1/4}, N^{1/2}\})(N^{-1}\max |\psi'|)^{p+1}(\ln N)^{1/2}\bigg)
                                  \tnorm{\chi}_{SD}.
   \end{multline}
   The new bounds on the stabilisation parameters are consequences of \eqref{eq:SDFEM:stabest}.
  \end{proof}

  \begin{rem}
%
   In order to achieve the supercloseness property we have to stabilise in $\Omega_{11}$.
   In the characteristic layer region we may stabilise, but this is not necessary
   for supercloseness. By choosing $\delta_{21}=C(N^{-1}\max|\psi'|)^2$ above result 
   can be slightly improved to
   \[
    \tnorm{\pi^N u-u^N_{SD}}_{SD}\leq C (N^{-1}\max|\psi'|)^{p+1/2}(\ln N)^{1/2}.
   \]
   The bound \eqref{eq:SDFEM:super:conv} does also show, that for $\eps\geq N^{-1}$
   even the Galerkin FEM ($\delta_{11}=\delta_{21}=0$)
   fulfils a supercloseness property of order $p+1/2$.
   Unfortunately, this case is of little interest in general.
  \end{rem}
  
  We have already seen in Section~\ref{sec:results:GFEM} that the two interpolation 
  operators $\pi^N$ (vertex-edge-cell interpolation) and $I^N$ (Gauß-Lobatto interpolation)
  show a similar numerical behaviour. Recalling \eqref{eq:inter:connection2}
  \[
    \pi_p^Nv=I_p^Nv
             +(\pi^N_{p+1}v-v)
             +\left(I^N_p(\pi^N_{p+1}v-v)-(\pi^N_{p+1}v-v)\right)
  \]
  we obtain
  \[
   \enorm{I_p^Nu-u_{SD}^N}
    \leq \enorm{\pi_p^Nu-u^N_{SD}}
        +\enorm{I_p^N(\pi_{p+1}^Nu-u)-(\pi_{p+1}^Nu-u)}
        +\enorm{\pi_{p+1}^Nu-u}.
  \]
  Now the first term is estimated in Theorem \ref{thm:SDFEM:superclose}, while the other two
  terms are interpolation errors of higher-order. Combining the results gives \cite[Theorem 4.8]{Fr12}.

  \begin{thm}[Theorem 4.8 of \cite{Fr12}]\label{thm:SDFEM:superclose2}
   Let $\sigma\geq p+2$. Then it holds for the streamline-diffusion solution $u^N_{SD}$
   under the restrictions on the stabilisation parameters given in Theorem~\ref{thm:SDFEM:superclose}
   \[
    \enorm{I^N u-u^N_{SD}}\leq C(N^{-1}\max |\psi'|)^{p+1/2}(\max|\psi'|\ln N)^{1/2}.
   \]
  \end{thm}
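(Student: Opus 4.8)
The plan is to build directly on the operator identity \eqref{eq:inter:connection2}. Setting $v=u$ there and applying the triangle inequality gives
\[
 \enorm{I^N_pu-u^N_{SD}}
   \leq \enorm{\pi^N_pu-u^N_{SD}}
       +\enorm{\pi^N_{p+1}u-u}
       +\enorm{I^N_p(\pi^N_{p+1}u-u)-(\pi^N_{p+1}u-u)},
\]
exactly as displayed just before the theorem. So it suffices to bound the three summands and to check that they all fit under the claimed bound. Note that $\sigma\geq p+2$ in particular gives $\sigma\geq p+1$, so every result that was proved under the weaker mesh restriction is available.

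For the first summand I would invoke Theorem~\ref{thm:SDFEM:superclose}: under the stated restrictions on the $\delta_\tau$ and for $\sigma\geq p+1$ it yields $\sdnorm{\pi^N_pu-u^N_{SD}}\leq C(N^{-1}\max|\psi'|)^{p+1/2}(\max|\psi'|\ln N)^{1/2}$. Since $\delta_\tau\geq 0$, comparing the definitions \eqref{eq:norm:energy} and \eqref{eq:norm:SD} shows $\enorm{\cdot}\leq\sdnorm{\cdot}$, so the same bound holds for $\enorm{\pi^N_pu-u^N_{SD}}$. This is the only place where the PDE, the discretisation and the supercloseness machinery enter; the remaining two terms are purely approximation-theoretic.

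For the second summand, $\enorm{\pi^N_{p+1}u-u}$ is the energy-norm interpolation error of the degree-$(p+1)$ vertex--edge--cell interpolant. I would apply the anisotropic estimates \eqref{eq:high:prop_error_L2:Lin} (now for polynomial degree $p+1$, i.e.\ with $s$ up to $p+2$ and $t$ up to $p+1$) term by term to the components $v,w_1,w_2,w_{12}$ of the decomposition from Assumption~\ref{ass:dec}, using the higher-order $L_2$-smoothness of Assumption~\ref{ass:dec_ext} for the top-order terms, together with the mesh-width bounds \eqref{eq:hi_estimate}--\eqref{eq:kj_estimate} and the $\ord{N^{-\sigma}}$-smallness of the layer parts on $\Omega_{11}$. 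The condition $\sigma\geq p+2$ is precisely what keeps all the exponential factors produced by the mesh-width estimates decaying, so that one obtains $\enorm{\pi^N_{p+1}u-u}\leq C(N^{-1}\max|\psi'|)^{p+1}$. The third summand is $\enorm{(I^N_p-\mathrm{id})g}$ with $g:=\pi^N_{p+1}u-u$; since $u\in H^{p+2}$ elementwise (Assumption~\ref{ass:dec_ext}) and $\pi^N_{p+1}u$ is a polynomial, $g\in H^{p+2}(\tau)$ for every $\tau$, so the Lagrange-type estimates \eqref{eq:high:prop_error_L2:Lagrange} apply and reduce this term to $L_q$-norms of $\partial^{p+1}g=\partial^{p+1}(\pi^N_{p+1}u-u)$; bounding those in turn by the gradient-type estimates in \eqref{eq:high:prop_error_L2:Lin} shows that $(I^N_p-\mathrm{id})g$ carries one additional mesh-width factor relative to $g$, hence also $\enorm{(I^N_p-\mathrm{id})g}\leq C(N^{-1}\max|\psi'|)^{p+1}$, up to at most a logarithmic factor.

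Finally I would collect the bounds. Since $N^{-1}\leq\ln N$ for all $N\geq 1$, one has $(N^{-1}\max|\psi'|)^{p+1}=(N^{-1}\max|\psi'|)^{p+1/2}(N^{-1}\max|\psi'|)^{1/2}\leq (N^{-1}\max|\psi'|)^{p+1/2}(\max|\psi'|\ln N)^{1/2}$, so the second and third summands are absorbed into the bound already obtained for the first one, which gives the theorem. The main obstacle is the third summand: controlling $\partial^{p+1}$ of a degree-$(p+1)$ interpolation error requires either iterating the anisotropic gradient estimates or combining inverse inequalities on the polynomial part with the top-order $L_2$-smoothness from Assumption~\ref{ass:dec_ext}, and it is exactly here that the argument is tied to the full space $\QS_p$ --- for the Serendipity spaces with $p\geq 4$ the identity \eqref{eq:inter:connection1} fails and this route is unavailable.
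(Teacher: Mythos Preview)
Your approach is exactly the one the paper uses: the argument immediately preceding the theorem applies the triangle inequality to the identity \eqref{eq:inter:connection2}, bounds the first summand via Theorem~\ref{thm:SDFEM:superclose} (using $\enorm{\cdot}\leq\sdnorm{\cdot}$), and then states that ``the other two terms are interpolation errors of higher-order'', deferring the details to~\cite{Fr12}. Your write-up simply expands on those details; the structure, the key identity, and the role of the condition $\sigma\geq p+2$ all match.
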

  
  Thus, the Gauß-Lobatto interpolation inherits the supercloseness property from the vertex-edge-cell
  interpolation. A supercloseness property can be used to enhance the quality of the solution 
  by a simple postprocessing routine. 
  
  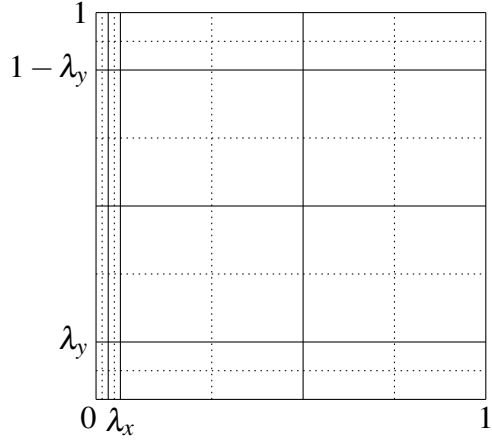
\begin{figure}
   \begin{center}
     \begin{minipage}[c]{6cm}
      \vspace*{0cm}
      \setlength{\unitlength}{0.57pt}
      \begin{picture}(256,256)
       \multiput(  4,  0)(0,4){64}{\line(0,1){1}}
       \multiput( 12,  0)(0,4){64}{\line(0,1){1}}
       \multiput( 76,  0)(0,4){64}{\line(0,1){1}}
       \multiput(196,  0)(0,4){64}{\line(0,1){1}}

       \multiput(  0, 19)(4,0){64}{\line(1,0){1}}
       \multiput(  0, 83)(4,0){64}{\line(1,0){1}}
       \multiput(  0,173)(4,0){64}{\line(1,0){1}}
       \multiput(  0,237)(4,0){64}{\line(1,0){1}}

       \put(  0,  0){\line( 0, 1){256}}
       \put(  8,  0){\line( 0, 1){256}}
       \put( 16,  0){\line( 0, 1){256}}
       \put(136,  0){\line( 0, 1){256}}
       \put(256,  0){\line( 0, 1){256}}

       \put(  0,  0){\line( 1, 0){256}}
       \put(  0, 38){\line( 1, 0){256}}
       \put(  0,128){\line( 1, 0){256}}
       \put(  0,218){\line( 1, 0){256}}
       \put(  0,256){\line( 1, 0){256}}

       \put(-5,-5){\makebox(0,0)[t]{$0$}}
       \put(16,-5){\makebox(0,0)[t]{$\lambda_x$}}
       \put(-5,38){\makebox(0,0)[r]{$\lambda_y$}}
       \put(-5,218){\makebox(0,0)[r]{$1-\lambda_y$}}
       \put(256,-5){\makebox(0,0)[t]{$1$}}
       \put(-5,256){\makebox(0,0)[r]{$1$}}

      \end{picture}
     \end{minipage}
   \end{center}
   \caption{\label{fig:post:mesh}Macroelements $M$ of $\tilde T^{N/2}$
            constructed from $T^N$}
  \end{figure}
  Suppose $N$ is divisible by 8. We construct a coarser macro mesh $\tilde T^{N/2}$
  composed of macro rectangles $M$, each consisting of four rectangles of~$T^N$.
  The construction of these macro elements $M$ is done such that the union of them covers
  $\Omega$ and none of them crosses the transition lines at $x=\lambda_x$ and at $y=\lambda_y$
  or $y=1-\lambda_y$, see Figure~\ref{fig:post:mesh}.
  Remark that in general $\tilde T^{N/2}\neq T^{N/2}$ due to different transition
  points $\lambda_x$ and $\lambda_y$, and the mesh generating function $\phi$.

  We now define local postprocessing operators for one macro element $M\in\tilde T^{N/2}$.
  The precise definition can be found in \cite{Fr12}, we will give only the basic ideas here.
  
  The first one was presented in 1d in \cite{Tob06} and is a modification of an operator given 
  in \cite{Lin91}. Let the local operator
  $\widehat{P}_{vec}:C[-1,1]\to \PS_{p+1}[-1,1]$ be given on the reference interval $[-1,1]$ by
  \begin{align*}
    \widehat{P}_{vec} \hat v(-1)&=v(x_{i-1}),\qquad
    \widehat{P}_{vec} \hat v( a) =v(x_{i}),\qquad
    \widehat{P}_{vec} \hat v( 1) =v(x_{i+1}),\\
    \mbox{and for $p=2$:}\hspace*{1cm}
    \int_{-1}^{1} (\widehat{P}_{vec}\hat v-\hat v)&=0,\\
    \mbox{while for $p\geq 3$:}\hspace*{1cm}
    \int_{-1}^{a} (\widehat{P}_{vec}\hat v-\hat v)&=0,\quad
    \int_{a}^1    (\widehat{P}_{vec}\hat v-\hat v) =0,\\
    \int_{-1}^1   (\widehat{P}_{vec}\hat v-\hat v)q&=0,\quad q\in \PS_{p-2}[-1,1]\setminus\R,
  \end{align*}
  where $\hat v$ is a function $v|_{[x_{i-1},x_{i+1}]}$ linearly mapped onto the reference interval
  and $a\in(-1,1)$ is the point that $x_i$ is mapped onto.
  By using the reference mapping and the tensor product structure, we obtain the full postprocessing operator
  $P_{vec,M}:C(M)\to \QS_{p+1}(M)$ on each macro element.
  Then, this piecewise projection is extended to a global, continuous operator $P_{vec}$.

  The second postprocessing operator is defined by using 
  the ordered sample of Gau\ss-Lobatto points $\{(\tilde x_i,\tilde y_j)\}$, $i,j=0,\dots,2p$ 
  of the four rectangles that $M$ consists of.
  
  Let $P_{GL,M}:C(M)\to \QS_{p+1}(M)$ denote the projection/interpolation operator
  fulfilling
  \[
   P_{GL,M}v(\tilde x_i,\tilde y_j)=v(\tilde x_i,\tilde y_j),\quad i,j=0,\,1,\,3,\,5,\dots,2p-3,2p-1,\,2p.
  \]
  Then, this piecewise projection is extended to a global, continuous operator $P_{GL}$.

  We have for the postprocessed numerical solutions the following superconvergence result~\cite[Theorem 5.2]{Fr12}.

  \begin{thm}[Theorem 5.2 of \cite{Fr12}]\label{thm:SDFEM:post}
   Let $\sigma\geq p+2$. Then it holds for the streamline-diffusion solution $u^N_{SD}$
   under the restrictions on the stabilisation parameters given in Theorem~\ref{thm:SDFEM:superclose}
   \[
     \enorm{u-P_{GL}u^N_{SD}}
    +\enorm{u-P_{vec}u^N_{SD}}\leq C(N^{-1}\max|\psi'|)^{p+1/2}(\max|\psi'|\ln N)^{1/2}.
   \]
  \end{thm}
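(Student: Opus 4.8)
The plan is to derive the postprocessing error estimate from the supercloseness result in Theorem~\ref{thm:SDFEM:superclose2} together with the approximation properties of the postprocessing operators $P_{GL}$ and $P_{vec}$ and their $L_2$-stability on each macroelement. First I would split the error by inserting the interpolant:
\[
 \enorm{u-P_{GL}u^N_{SD}}
 \leq \enorm{u-P_{GL}I^Nu}
     +\enorm{P_{GL}(I^Nu-u^N_{SD})},
\]
and similarly with $P_{vec}$ and the vertex-edge-cell interpolant $\pi^N$ (recalling that on $\QS_p$-elements $\pi^N$ and $I^N$ differ only by higher-order terms via~\eqref{eq:inter:connection2}, so it does not matter which we pair with which). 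The first term is a pure approximation term: since $P_{GL}$ reproduces polynomials of degree $p+1$ on each macroelement $M$, one has $P_{GL}I^Nu = P_{GL}u$ up to the local interpolation error, so $u-P_{GL}I^Nu$ is controlled by the $(p+1)$-st order interpolation/approximation error of $u$ on $\tilde T^{N/2}$. Applying the anisotropic estimates \eqref{eq:high:prop_error_L2:Lin}/\eqref{eq:high:prop_error_L2:Lagrange} region by region, exactly as in the proof of the Galerkin convergence Theorem~\ref{thm:Gal:conv} but with one extra order, and using Assumption~\ref{ass:dec} (and the extra $L_2$-bounds of Assumption~\ref{ass:dec_ext} where derivatives of order $p+2$ appear), this term is bounded by $C(N^{-1}\max|\psi'|)^{p+1}$ in the energy norm, which is of higher order than the claimed bound; the condition $\sigma\geq p+2$ is precisely what makes the layer contributions on $\Omega_{11}$ negligible at this order.

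For the second term I would use the local $L_2$-stability of the postprocessing operators on each macroelement. Since $P_{GL,M}$ and $P_{vec,M}$ are projection-type operators defined by point values (resp.\ point values and local integral moments) on a fixed reference configuration, a scaling argument gives $\norm{P_{GL,M}z}{0,M}\leq C\norm{z}{0,M}$ and $\norm{\grad(P_{GL,M}z)}{0,M}\leq C h_M^{-1}\norm{z}{0,M}$ for $z\in C(M)$ with $h_M$ the local mesh size, uniformly in the aspect ratio (because the macroelement mesh is, like $T^N$, a tensor product of S-type meshes and the two anisotropic directions decouple). Summing over $M\in\tilde T^{N/2}$ and recalling $h\leq k\leq N^{-1}\max|\psi'|$ from~\eqref{eq:ass:hk}, one obtains
\[
 \enorm{P_{GL}(I^Nu-u^N_{SD})}
 \leq C\,\enorm{I^Nu-u^N_{SD}}
 \leq C(N^{-1}\max|\psi'|)^{p+1/2}(\max|\psi'|\ln N)^{1/2}
\]
by Theorem~\ref{thm:SDFEM:superclose2}; the $\eps$-weighted gradient part needs the inverse-type bound $h_M^{-1}$ but this is absorbed because $\enorm{\cdot}$ already carries the factor $\eps^{1/2}$ and the supercloseness quantity $I^Nu-u^N_{SD}$ lives in $V^N$, so the discrete gradient is controlled by the energy norm itself; no loss of order occurs. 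For $P_{vec}$ one argues identically, using the 1d operator $\widehat P_{vec}$ tensorised, and then the connection between $\pi^N$ and $I^N$ to reduce to the same supercloseness result.

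Combining the two pieces, the dominant contribution is the second one, giving the stated bound $C(N^{-1}\max|\psi'|)^{p+1/2}(\max|\psi'|\ln N)^{1/2}$ for both $P_{GL}u^N_{SD}$ and $P_{vec}u^N_{SD}$. The main obstacle I anticipate is verifying the uniform (in $\eps$ and in the anisotropy of the mesh) $L_2$- and $H^1$-stability of the macroelement postprocessing operators: one must be careful that extending the piecewise operators $P_{GL,M}$, $P_{vec,M}$ to a globally continuous operator does not spoil stability, and that the anisotropic scaling on the highly stretched cells inside the layer regions is handled correctly — this is where the tensor-product structure of the macromesh and the assumption that no macroelement crosses a transition line are essential. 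A secondary technical point is confirming that the first (approximation) term really is of order $p+1$ and not $p+1/2$, i.e.\ that $P_{GL}I^Nu-P_{GL}u$ and the gradient thereof are genuinely $O((N^{-1}\max|\psi'|)^{p+1})$ in the energy norm, which again reduces to the anisotropic interpolation estimates already available together with $\sigma\geq p+2$.
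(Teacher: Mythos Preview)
The paper does not prove this theorem; it merely cites \cite[Theorem 5.2]{Fr12}. Your outline is the standard superconvergence-via-postprocessing argument and is essentially what that reference does, but two points deserve sharpening.

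First, the relation $P_{GL}I^Nu = P_{GL}u$ is an \emph{exact} identity, not merely ``up to the local interpolation error'': $P_{GL,M}$ is determined by point values at a subset of the Gau\ss--Lobatto nodes of the four fine cells forming $M$, and $I^N u$ coincides with $u$ at every such node by definition. Likewise $P_{vec}\pi^N u = P_{vec} u$ exactly, because the degrees of freedom defining $P_{vec}$ (vertex values and integral moments) are contained in those preserved by $\pi^N$. This collapses the first term to $\enorm{u-P_{GL}u}$ (resp.\ $\enorm{u-P_{vec}u}$), a pure $(p{+}1)$-st order interpolation error on the macro mesh; here the condition $\sigma\ge p+2$ and the additional regularity of Assumption~\ref{ass:dec_ext} enter.

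Second, your stability argument is muddled. The chain ``$\norm{\grad(P_{GL,M}z)}{0,M}\le Ch_M^{-1}\norm{z}{0,M}$, then $\eps^{1/2}$ absorbs $h_M^{-1}$'' does not work and is not needed. What is required --- and what holds --- is the direct bound $\norm{\partial_x(P_{M}v^N)}{0,M}\le C\norm{\partial_x v^N}{0,M}$ and the analogue in $y$, for $v^N\in V^N$, uniformly in the aspect ratio. This follows because both $P_{GL}$ and $P_{vec}$ are tensor products of one-dimensional operators; the 1d $H^1$-stability is elementary, and tensorisation yields the anisotropic 2d bound with no loss. Your parenthetical remark about the tensor-product structure and the decoupling of directions is exactly the right reason; the surrounding $h_M^{-1}$/$\eps^{1/2}$ discussion should be dropped.
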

  
  Let us come to the numerical example ~\eqref{eq:num_example}.
  Although Theorems~\ref{thm:SDFEM:superclose2} and~\ref{thm:SDFEM:post} assume $\sigma\geq p+2$
  we will use a Bakhvalov-S-mesh with $\sigma=p+3/2$ and $\eps=10^{-6}$ as numerical results
  suggest this to be enough. Note also, that in the bilinear case $\sigma=1+3/2$ is a standard choice
  for superconvergence simulations,~\cite{Zhang03,FrLR08,FrL08}.
  For the stabilisation parameters we have two choices, according to Theorems~\ref{thm:SDFEM:conv} 
  and \ref{thm:SDFEM:superclose}:
%
  \begin{subequations}
  \begin{align}
    \delta_{11}&=C_{SD},\quad
    \delta_{21}=C_{SD}\eps^{-1/2}N^{-2},\quad
    \delta_{12}=\delta_{22}=0,\label{eq:SDFEM:delta:set1}\\
  \mbox{or}\qquad
    \delta_{11}&=C_{SD}N^{-1},\quad
    \delta_{21}=C_{SD}\eps^{-1/2}N^{-3},\quad
    \delta_{12}=\delta_{22}=0.\label{eq:SDFEM:delta:set2}
  \end{align}
  \end{subequations}
  
  For both our investigations into convergence and superconvergence we will
  use the smaller parameters, i.e. \eqref{eq:SDFEM:delta:set2}.
  The influence of $C_{SD}$ to various norms can be seen in Figure~\ref{fig:SDFEM:C}
  using $N=64$ and $\eps=10^{-6}$.
  \begin{figure}
   \centerline{\includegraphics[width=0.78\textwidth]{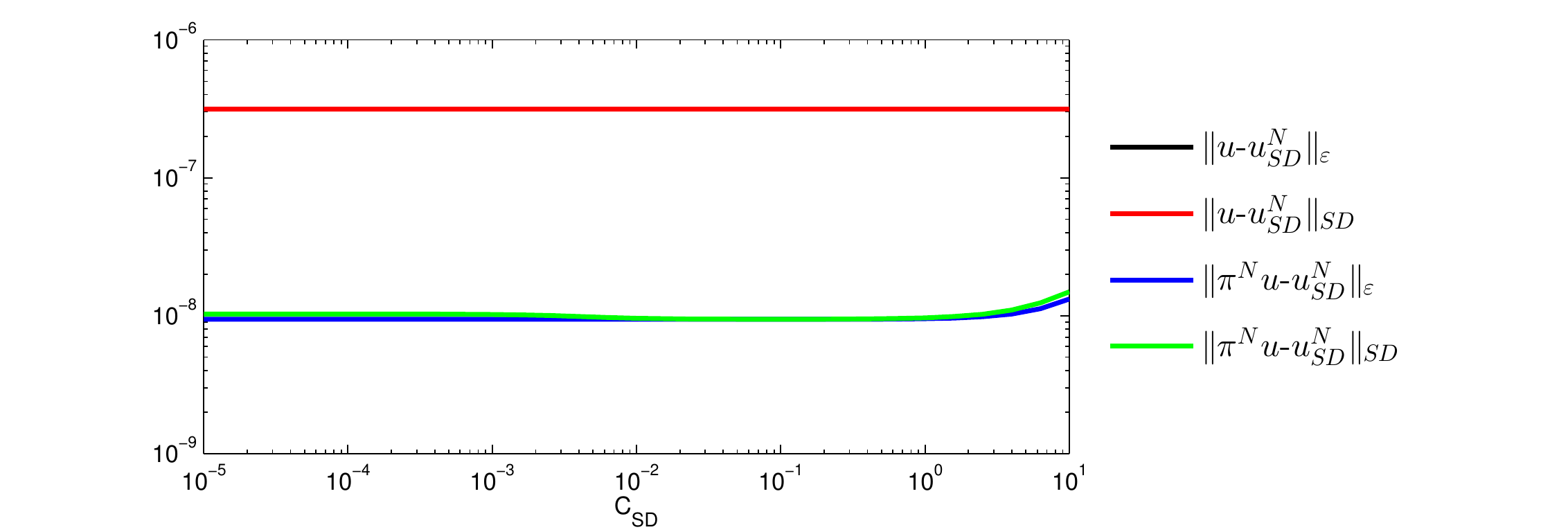}}
   \caption{Influence of the stabilisation constant $C_{SD}$ onto the error behaviour
            \label{fig:SDFEM:C}}
  \end{figure}
   Therein, the norms are not strongly influenced by the choice of moderate values of $C_{SD}$.
   Thus, in the following we will use $C_{SD}=1$.

  Table~\ref{tab:SDFEM:conv2}
  \begin{table}[tb]
   \begin{center}
    \caption{Convergence errors of SDFEM for $\QS_p$- and $\QS_p^\oplus$-elements, 
             and $p=4,\,5$ with $\delta_{ij}$ according to \eqref{eq:SDFEM:delta:set2}
             \label{tab:SDFEM:conv2}}
    \begin{tabular}{r|ll|ll|ll|ll}
     \multicolumn{1}{c}{}& \multicolumn{8}{c}{$\enorm{u-u_{SD}^N}$}\\
     \rule{0pt}{1.1em}$N$
         & \multicolumn{2}{c}{$\QS_4$}
         & \multicolumn{2}{c|}{$\QS_4^\oplus$}   
         & \multicolumn{2}{c}{$\QS_5$}
         & \multicolumn{2}{c}{$\QS_5^\oplus$}   \\[2pt]
     \hline\rule{0pt}{1.1em}
       8 &  6.709e-04 & 3.66 & 1.480e-03 & 3.69 & 2.198e-04 & 4.67 & 5.331e-04 & 4.56\\
      16 &  5.308e-05 & 3.84 & 1.150e-04 & 3.87 & 8.634e-06 & 5.34 & 2.258e-05 & 4.86\\
      32 &  3.715e-06 & 3.91 & 7.859e-06 & 3.94 & 2.134e-07 & 4.99 & 7.761e-07 & 4.93\\
      64 &  2.467e-07 & 3.96 & 5.107e-07 & 3.97 & 6.722e-09 & 4.95 & 2.554e-08 & 4.94\\
     128 &  1.590e-08 & 3.98 & 3.248e-08 & 3.99 & 2.170e-10 & 4.59 & 8.330e-10 & 0.14\\
     256 &  1.009e-09 & 3.99 & 2.046e-09 & 3.99 & 8.989e-12 &      & 7.585e-10 &     \\
     320 &  4.147e-10 &      & 8.396e-10 &      &           &      &           &     
    \end{tabular}
   \end{center}
  \end{table}
  shows the results for the polynomial spaces $\QS_p$ and $\QS_p^\oplus$ in the cases 
  $p=4$ and $p=5$. As we can see, the convergence orders of $p$ are achieved and again
  we only have a constant factor of about 2 ($p=4$) and about 3 ($p=5$) in the errors
  when switching from the full to the Serendipity space.

%
  \begin{table}[tb]
   \begin{center}
    \caption{Supercloseness property of SDFEM for $p=4$ and $\delta_{ij}$ according to \eqref{eq:SDFEM:delta:set2}
             \label{tab:SDFEM:super}}
    \begin{tabular}{r|lr|lr|lr|lr}
     \multicolumn{1}{c}{}& \multicolumn{6}{c|}{$\QS_4$}&\multicolumn{2}{c}{$\QS_4^\oplus$}\\
     \rule{0pt}{1.1em}$N$
         & \multicolumn{2}{c}{$\enorm{\pi^Nu-u_{SD}^N}$}
         & \multicolumn{2}{c}{$\enorm{I^Nu-u_{SD}^N}$}   
         & \multicolumn{2}{c|}{$\enorm{J^Nu-u_{SD}^N}$}
         & \multicolumn{2}{c}{$\enorm{\pi^Nu-u_{SD}^N}$}\\[2pt]
     \hline\rule{0pt}{1.1em}
       8 & 1.717e-04 & 4.28 & 2.004e-04 & 4.31 & 3.241e-04 & 3.78 & 6.824e-04 & 3.46\\
      16 & 8.810e-06 & 5.10 & 1.009e-05 & 5.00 & 2.358e-05 & 3.91 & 6.204e-05 & 3.69\\
      32 & 2.566e-07 & 4.95 & 3.150e-07 & 4.92 & 1.572e-06 & 3.92 & 4.798e-06 & 3.83\\
      64 & 8.302e-09 & 4.95 & 1.041e-08 & 4.94 & 1.036e-07 & 3.96 & 3.375e-07 & 3.91\\
     128 & 2.679e-10 & 4.91 & 3.385e-10 & 4.89 & 6.669e-09 & 3.98 & 2.242e-08 & 3.96\\
     256 & 8.919e-12 & 1.38 & 1.144e-11 & 2.14 & 4.232e-10 & 3.98 & 1.445e-09 & 3.97\\
     320 & 6.550e-12 &      & 7.098e-12 &      & 1.740e-10 &      & 5.959e-10 &     
    \end{tabular}
   \end{center}
  \end{table}
  As predicted by Theorems~\ref{thm:SDFEM:superclose} and \ref{thm:SDFEM:superclose2} we observe 
  in Table~\ref{tab:SDFEM:super} for the case $p=4$ a supercloseness property. 
  But, the order is $p+1$ 
  for both the vertex-edge-cell interpolation operator $\pi^N$ and 
  the Gauß-Lobatto interpolation operator $I^N$ instead of the predicted $p+1/2$. 
  Thus the analytical results may not be sharp. 
  Note that for the equidistant-interpolation operator $J^N$ and for the Serendipity space this
  property is not evident.
  
  Let us now come to exploiting the supercloseness property. Table~\ref{tab:SDFEM:post}
%
  \begin{table}[tb]
   \begin{center}
    \caption{Postprocessing of SDFEM for $\QS_4$ and $\delta_{ij}$ according to \eqref{eq:SDFEM:delta:set2}
             \label{tab:SDFEM:post}}
    \begin{tabular}{r|lr|lr}
     \rule{0pt}{1.1em}$N$
         & \multicolumn{2}{c}{$\enorm{u-P_{vec}u_{SD}^N}$}
         & \multicolumn{2}{c}{$\enorm{u-P_{GL}u_{SD}^N}$}\\[2pt]
     \hline\rule{0pt}{1.1em}
       8 & 4.725e-03 & 4.68 & 1.196e-02 & 4.80\\
      16 & 1.850e-04 & 4.92 & 4.301e-04 & 5.15\\
      32 & 6.104e-06 & 4.99 & 1.210e-05 & 5.27\\
      64 & 1.918e-07 & 5.01 & 3.145e-07 & 5.28\\
     128 & 5.961e-09 & 5.01 & 8.091e-09 & 5.24\\
     256 & 1.853e-10 & 5.00 & 2.143e-10 & 5.15\\
     320 & 6.071e-11 &      & 6.794e-11 &     
    \end{tabular}
   \end{center}
  \end{table}
  gives the results of applying the postprocessing operators $P_{vec}$ and $P_{GL}$ to 
  the SDFEM-solution. In correspondence with Theorem~\ref{thm:SDFEM:post} we observe an improved
  convergence behaviour. We see a superconvergence of order $p+1$, half an order better 
  than predicted. 
  
  Note that simulations with other polynomial degrees show similar results.
  
 \section{Results for LPSFEM}\label{sec:results:LPSFEM}
  Finally, we analyse the LPSFEM. For its application to \eqref{eq:Lu}
  with general higher-order elements we find a convergence result of order $p$
  in~\cite{FrM10_1}.
%
  \begin{thm}[Theorem 6 of \cite{FrM10_1}]\label{thm:LPSFEM:conv}
%
   Let the solution $u$ of~\eqref{eq:Lu} satisfy Assumption~\ref{ass:dec}.
   If the stabilisation parameters are chosen according to
    \begin{gather}\label{eq:LPSFEM:delta_max}
     \delta_{11}
      \leq C_{LPS} N^{-2}\big(\max|\psi'|\big)^{2p},\,
     \delta_{21}
      \leq C_{LPS} \eps^{-1/2}\ln^{-1}N \big(N^{-1}\max|\psi'|\big)^2,\,
      \delta_{12}  =\delta_{22} =0
    \end{gather}
   where $C_{LPS}>0$ is a constant and $\sigma\geq p+1$,
   we have for the LPSFEM solution $u_{LPS}^N$ of~\eqref{eq:LPSFEM_form}
   \begin{gather}\label{eq:LPSFEM:conv}
    \enorm{u-u_{LPS}^N}\leq C\big(N^{-1}\max|\psi'|\big)^p.
   \end{gather}
  \end{thm}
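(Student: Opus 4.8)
The plan is to follow the standard variational framework for stabilised FEM on layer-adapted meshes, combining coercivity of $a_{LPS}$ with a careful splitting of the error via the interpolation operator $\pi^N$. First I would write $u - u_{LPS}^N = (u - \pi^N u) + (\pi^N u - u_{LPS}^N) =: \eta + \chi^N$, where $\chi^N \in V^N$. The interpolation error $\eta$ is controlled directly: using the anisotropic estimates \eqref{eq:high:prop_error_L2:Lin} together with the solution decomposition from Assumption~\ref{ass:dec} and the mesh-size bounds \eqref{eq:hi_estimate}, \eqref{eq:kj_estimate}, one shows $\enorm{u - \pi^N u} \leq C(N^{-1}\max|\psi'|)^p$; this is essentially \cite[Theorem 12]{FrM10} combined with the subregion-wise argument (regular part $v$ gives $N^{-p}$ everywhere; the layer parts give $N^{-p}$ on $\Omega_{11}$ since they are $\ord{N^{-\sigma}}$ there with $\sigma \geq p+1$, and give $(N^{-1}\max|\psi'|)^p$ on the layer subregions after inserting the mesh widths).

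For the discrete part, I would use coercivity \eqref{eq:LPS:coer} to get $\lnorm{\chi^N}^2 \leq a_{LPS}(\chi^N, \chi^N)$, then insert the weak consistency relation \eqref{eq:LPS:weak_conc}:
\begin{align*}
 \lnorm{\chi^N}^2 &\leq a_{LPS}(\pi^N u - u, \chi^N) + a_{LPS}(u - u_{LPS}^N, \chi^N)\\
                  &= a_{LPS}(\pi^N u - u, \chi^N) + s(u, \chi^N)\\
                  &= a_{Gal}(\pi^N u - u, \chi^N) + s(\pi^N u, \chi^N).
\end{align*}
So it remains to bound $a_{Gal}(\eta, \chi^N)$ and the stabilisation term $s(\pi^N u, \chi^N)$ by $C(N^{-1}\max|\psi'|)^p \lnorm{\chi^N}$, after which $\lnorm{\chi^N} \leq C(N^{-1}\max|\psi'|)^p$ and the triangle inequality with $\enorm{\cdot} \leq \lnorm{\cdot}$ finishes the proof.

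The term $a_{Gal}(\eta, \chi^N) = \eps(\grad\eta, \grad\chi^N) - (b\eta_x, \chi^N) + (c\eta, \chi^N)$ is handled component-wise: the diffusion and reaction terms are routine via Cauchy--Schwarz, the interpolation estimates, and the energy norm of $\chi^N$ (absorbed in $\lnorm{\chi^N}$); the convective term $(b\eta_x, \chi^N)$ needs the usual trick of integrating by parts on the non-layer region $\Omega_{11}$ (moving the derivative onto $\chi^N$ is \emph{not} available here, so instead one uses $\norm{\eta}{0,\Omega_{11}} \leq CN^{-(p+1)}$ against $\norm{\chi^N}{0}$) while on the layer subregions one estimates $\norm{b\eta_x}{0}$ directly against $\norm{\chi^N}{0}$, exploiting that the layer contributions to $\eta$ are small. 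The genuinely new ingredient, and what I expect to be the main obstacle, is bounding $s(\pi^N u, \chi^N) \leq \sum_\tau \delta_\tau \norm{\kappa_\tau(b(\pi^N u)_x)}{0,\tau}\norm{\kappa_\tau(b\chi^N_x)}{0,\tau}$: one writes $\kappa_\tau(b(\pi^N u)_x) = \kappa_\tau(b(\pi^N u - u)_x) + \kappa_\tau(b u_x)$, estimates the first piece by the anisotropic interpolation error for the derivative and the second by the approximation property of the fluctuation operator $\kappa_\tau$ (namely $\norm{\kappa_\tau w}{0,\tau} \leq C h_\tau^{p-1}\norm{\grad^{p-1} w}{0,\tau}$-type bounds combined with the decomposition-based smoothness of $bu_x$), and then one checks that the precise choices of $\delta_{11}, \delta_{21}$ in \eqref{eq:LPSFEM:delta_max} are exactly what makes $\delta_\tau^{1/2}$ times these quantities collapse to $(N^{-1}\max|\psi'|)^p$ on each subregion, with $\delta_\tau^{1/2}\norm{\kappa_\tau(b\chi^N_x)}{0,\tau}$ absorbed into $s(\chi^N,\chi^N)^{1/2} \leq \lnorm{\chi^N}$. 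Balancing the $\eps$-powers, the $\max|\psi'|$ factors, and the $\ln N$ factor against the stabilisation parameters on $\Omega_{21}$ is the delicate bookkeeping; this is carried out in detail in \cite{FrM10_1} and I would simply reproduce that computation.
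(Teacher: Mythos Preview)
Your proposal is correct and follows essentially the same route as the paper (which in turn defers the full details to \cite{FrM10_1}): split the error through $\pi^N u$, invoke the interpolation estimate \cite[Theorem~12]{FrM10} for $\enorm{u-\pi^N u}$, and for the discrete part combine coercivity~\eqref{eq:LPS:coer} with the weak consistency~\eqref{eq:LPS:weak_conc} to reduce everything to bounding $a_{Gal}(\pi^N u-u,\chi^N)$ via \cite[Theorem~13]{FrM10} plus the stabilisation contribution $s(\pi^N u,\chi^N)$, which is controlled by the fluctuation-operator approximation property together with the parameter bounds~\eqref{eq:LPSFEM:delta_max}. Your description of the convective term is slightly garbled in wording (you \emph{do} integrate by parts to move the $x$-derivative off $\eta$, then use an inverse inequality on $\Omega_{11}$ for $\chi^N_x$), but the intended argument is the standard one and matches the paper's framework.
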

  
  Thus, if the stabilisation parameters are not too large then the convergence order $p$ of
  the Galerkin FEM is not disturbed. Similarly to the Galerkin FEM, no supercloseness
  result is known in the higher-order case. 
  A supercloseness property of order two was shown for bilinear elements in \cite{FrM10}.
  
  When analysing the SDFEM we proved superconvergence by bounding the convective term of 
  the Galerkin bilinear form against terms in the SDFEM norm.
  Unfortunately this trick does not help here with the LPSFEM. 
  Basically, there are two problems. First, the convective term cannot easily be bounded
  by the stabilisation term, as the stabilisation terms only include fluctuations of the convection.
  Here the idea of~\cite{Knob10} may help and we may use a stronger LPS-SDFEM norm, where the full 
  weighted streamline derivative is included. But then the second problem comes into play. 
  In order to estimate with the streamline derivative part of the norm we have to borrow 
  half an order of the stabilisation parameter $\delta_{11}$, cf. \eqref{eq:SDFEM:super:conv}.
  This costs us $\delta_{11}^{-1/2}\geq N/(\max|\psi'|)^p$ by \eqref{eq:LPSFEM:delta_max}. 
  Thus there would be no benefit in estimating with the stronger LPS norm.

  Let us now look at the numerical example ~\eqref{eq:num_example}. Again we will use a 
  Bakhvalov-S-mesh with $\sigma=p+3/2$ and $\eps=10^{-6}$.
  The stabilisation parameters are chosen according to Theorem~\ref{thm:LPSFEM:conv}.
  The influence of $C_{LPS}$ to various norms can be seen in Figure~\ref{fig:LPSFEM:C}
  using $N=64$ and $\eps=10^{-6}$.
  \begin{figure}[tbp]
   \centerline{\includegraphics[width=0.78\textwidth]{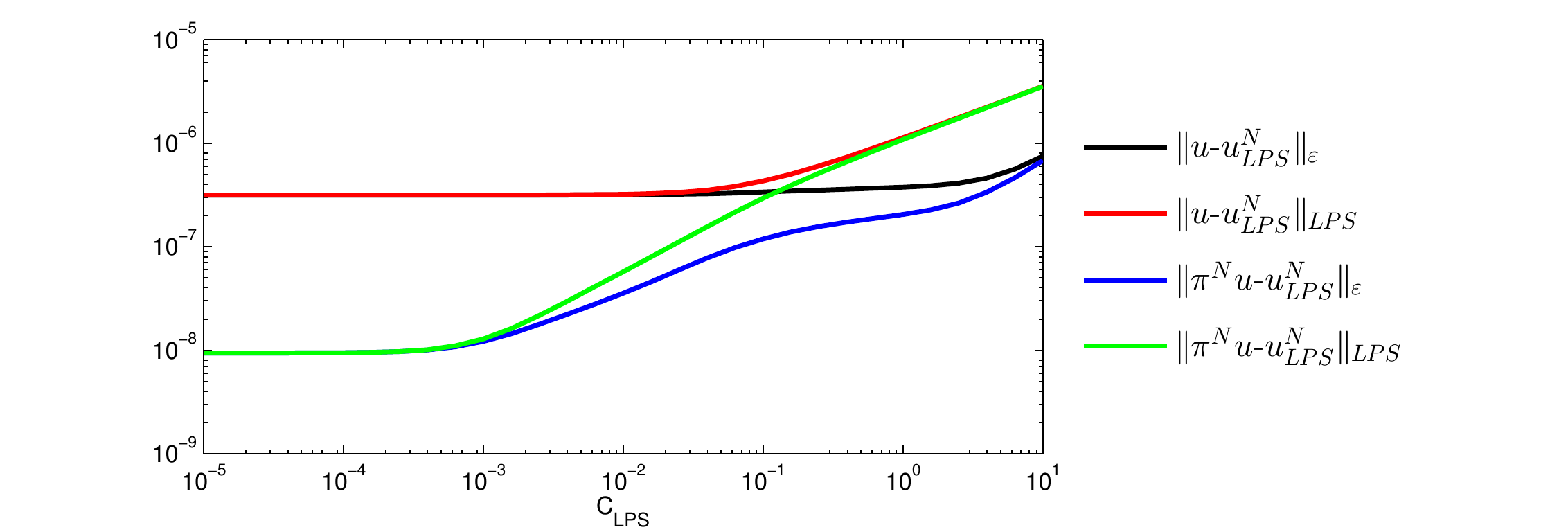}}
   \caption{Influence of the stabilisation constant $C_{LPS}$ onto the error behaviour
            \label{fig:LPSFEM:C}}
  \end{figure}
  Clearly, for larger values of $C_{LPS}$ more stabilisation is introduced. On the downside,
  if $C_{LPS}$ is too large the stabilisation term dominates the weak formulation of 
  \eqref{eq:Lu} unless $N$ is large enough. Therefore we have chosen for the following
  simulations $C_{LPS}=0.001$, i.e.
%
  \begin{align}\label{eq:LPSFEM:delta:set}
     \delta_{11}
      \leq 0.001 N^{-2}\big(\max|\psi'|\big)^{2p},\,
     \delta_{21}
      \leq 0.001 \eps^{-1/2}\ln^{-1}N \big(N^{-1}\max|\psi'|\big)^2,\,
      \delta_{12} =
      \delta_{22} =0.
  \end{align}

  Table~\ref{tab:LPSFEM:conv}
  \begin{table}[tbp]
   \begin{center}
    \caption{Convergence errors of LPSFEM for $\QS_p$- and $\QS_p^\oplus$-elements, 
             and $p=4,\,5$ with $\delta_{ij}$ according to \eqref{eq:LPSFEM:delta:set}
             \label{tab:LPSFEM:conv}}
    \begin{tabular}{r|ll|ll|ll|ll}
     \multicolumn{1}{c}{}& \multicolumn{8}{c}{$\enorm{u-u_{LPS}^N}$}\\
     \rule{0pt}{1.1em}$N$
         & \multicolumn{2}{c}{$\QS_4$}
         & \multicolumn{2}{c|}{$\QS_4^\oplus$}   
         & \multicolumn{2}{c}{$\QS_5$}
         & \multicolumn{2}{c}{$\QS_5^\oplus$}   \\[2pt]
     \hline\rule{0pt}{1.1em}
       8 & 8.348e-04 & 3.64 & 1.892e-03 & 3.64  & 1.088e-04 & 4.61 & 3.715e-04 & 4.57 \\
      16 & 6.716e-05 & 3.82 & 1.518e-04 & 3.86  & 4.457e-06 & 4.81 & 1.560e-05 & 4.82 \\
      32 & 4.752e-06 & 3.91 & 1.046e-05 & 3.94  & 1.584e-07 & 4.90 & 5.532e-07 & 4.92 \\
      64 & 3.160e-07 & 3.96 & 6.796e-07 & 3.98  & 5.291e-09 & 4.95 & 1.828e-08 & 4.94 \\
     128 & 2.037e-08 & 3.98 & 4.316e-08 & 3.99  & 1.713e-10 & 4.42 & 5.970e-10 & -0.29\\
     256 & 1.293e-09 & 3.99 & 2.716e-09 & 3.99  & 8.005e-12 &      & 7.319e-10 &      \\
     320 & 5.313e-10 &      & 1.114e-09 &       &           &      &           & 
    \end{tabular}
   \end{center}
  \end{table}
  shows the convergence behaviour of the LPSFEM for the same polynomial spaces as Table~\ref{tab:SDFEM:conv2}.
  Again we see convergence of order $p$ for the full and the Serendipity spaces.
  Although the Serendipity spaces need only half the number of degrees of freedom,
  and are therefore much cheaper in computation, only a factor of 2-4 lies between
  the errors of the full space and those of the Serendipity space.  

  Numerically, the LPSFEM does possess a supercloseness property too. Table~\ref{tab:LPSFEM:super}
  \begin{table}[tbp]
   \begin{center}
    \caption{Supercloseness property of LPSFEM for $p=4$ and $\delta_{ij}$ according to \eqref{eq:LPSFEM:delta:set}
             \label{tab:LPSFEM:super}}
    \begin{tabular}{r|lr|lr|lr|lr}
     \multicolumn{1}{c}{}& \multicolumn{6}{c|}{$\QS_4$}&\multicolumn{2}{c}{$\QS_4^\oplus$}\\
     \rule{0pt}{1.1em}$N$
         & \multicolumn{2}{c}{$\enorm{\pi^Nu-u_{LPS}^N}$}
         & \multicolumn{2}{c}{$\enorm{I^Nu-u_{LPS}^N}$}   
         & \multicolumn{2}{c|}{$\enorm{J^Nu-u_{LPS}^N}$}
         & \multicolumn{2}{c}{$\enorm{\pi^Nu-u_{LPS}^N}$}\\[2pt]
     \hline\rule{0pt}{1.1em}
       8 & 1.794e-04 & 4.50 & 2.167e-04 & 4.48 & 3.868e-04 & 3.74 & 8.717e-04 & 3.42\\
      16 & 7.909e-06 & 4.66 & 9.740e-06 & 4.68 & 2.885e-05 & 3.85 & 8.156e-05 & 3.67\\
      32 & 3.127e-07 & 4.68 & 3.790e-07 & 4.74 & 2.007e-06 & 3.92 & 6.395e-06 & 3.82\\
      64 & 1.216e-08 & 4.84 & 1.419e-08 & 4.86 & 1.328e-07 & 3.96 & 4.524e-07 & 3.91\\
     128 & 4.248e-10 & 4.99 & 4.871e-10 & 4.96 & 8.547e-09 & 3.98 & 3.017e-08 & 3.95\\
     256 & 1.339e-11 & 3.27 & 1.566e-11 & 3.52 & 5.422e-10 & 3.99 & 1.949e-09 & 3.97\\
     320 & 6.456e-12 &      & 7.133e-12 &      & 2.228e-10 &      & 8.039e-10 &     
    \end{tabular}
   \end{center}
  \end{table}
  shows it for the standard choice of the stabilisation parameters~\eqref{eq:LPSFEM:delta:set}.
  Here for $p=4$ the vertex-edge-cell interpolation operator $\pi^N$ and the Gauß-Lobatto
  interpolation operator $I^N$ show for the full space $\QS_p$ a supercloseness property of
  order $p+1$. So far, there is no theoretical explanation known for this fact.
  Similarly to the SDFEM and the Galerkin method, the equidistant interpolation operator
  $J^N$ and the Serendipity space do not possess such a property.
%
%
\chapter{What is the right norm?}\label{cha:norm}
  This chapter contains results from \cite{FrR11_2} that are also given in Appendix \ref{app:balanced}.
  Here we consider only bilinear elements, i.e.
  \[
    V^N:=\Big\{v\in H_0^1(\Omega):v|_\tau\in \QS_1(\tau)\;
    \forall\tau\in T^N\Big\}
  \]
  and restrict ourselves to the standard Shishkin mesh. See Remark~\ref{rem:norm:general}
  for ideas about the general case.

  As assumed in Assumption \ref{ass:dec}, the solution $u$ of \eqref{eq:Lu} has 
  an exponential outflow layer of the type $e^{-x/\eps}$ and
  a characteristic layer of the type $e^{-y/\sqrt\eps}$.
  The energy norms of these two components are
  \[
   \enorm{e^{-x/\eps}}=\ord{1}
   \quad\mbox{and}\quad
   \enorm{e^{-y/\sqrt\eps}}=\ord{\eps^{1/4}}.
  \]
  Thus, the last one, characterising the characteristic layer, is not
  well represented in the energy norm and
  is dominated by the exponential layer for small $\eps$.

%
  In the following we will present results in the balanced norm
  \begin{gather}\label{def:balnorm}
   \tnorm{v}_b:=\sqrt{\eps\norm{v_x}{0}^2+\eps^{1/2}\norm{v_y}{0}^2+\gamma\norm{v}{0}^2}.
  \end{gather}
  Now it holds
  \[
   \tnorm{e^{-x/\eps}}_b=\ord{1}
   \quad\mbox{and}\quad
   \tnorm{e^{-y/\sqrt\eps}}_b=\ord{1}
  \]
  and therefore
  both layer components are equally well represented in this norm.

  One possible application of balanced norms are uniform $L_\infty$-bounds
  of the error using a supercloseness result in a balanced norm, see 
  \cite[p. 399]{RST08}. Therein the concept is shown for a convection-diffusion problem
  with exponential layers only where the standard energy norm suffices.
  
  Considering reaction-diffusion problems, the standard energy norm is not well 
  balanced either. Here, first results in a balanced norm were obtained 
  in~\cite{LinStynes11} for a mixed finite element formulation and in~\cite{RSch11} 
  for a standard Galerkin approach. 

\section{A Streamline Diffusion Method}
  
  We will prove estimates in the balanced norm for a modified streamline diffusion method.
  Let us define the stabilisation bilinear form
  \begin{align*}
    a_{stab}(v,w)&:= \sum_{\tau\in T^N}(\eps\laplace v+b v_x-c v,\delta_\tau b w_x)_\tau,
                    && \!\!\!v\in H^1_0(\Omega)\cap H^2(\Omega),w\in H^1_0(\Omega),
  \end{align*}
  and the linear form
  \[
   f_{modSD}(v):=(f,v)-\sum_{\tau\in T^N}(f,\delta_\tau bv_x)_\tau,
                    \qquad v\in H^1_0(\Omega).
  \]
  Following the suggestion of \cite{CX08} we choose $\delta_\tau$
  as a stabilisation function on $\tau$ given by
  \[
   \delta|_{\tau_{ij}}
   :=\delta_{\tau_{ij}}
   :=\min\left\{\frac{h_i}{2\eps},\frac{1}{\norm{b}{\infty,\tau_{ij}}}\right\}
      h_i\frac{(x_i-x)(x-x_{i-1})}{h_i^2}.
  \]
  Thus $\delta_{\tau}$ is a quadratic bubble function in $x$-direction.
  This enables us to apply integration by parts in $x$ to some terms in our analysis
  without additional inner-boundary terms. Numerically, we see no difference to the
  standard SDFEM-formulation of Section~\ref{ssec:SDFEM} with constant $\delta_\tau$. 
  Note, that by definition it holds
  \begin{align}
    \norm{\delta}{L_\infty(\Omega_{12}\cup\Omega_{22})}
        &\leq C\eps(N^{-1}\ln N)^2,\label{eq:delta:estimate:1}\\
    \norm{\delta}{L_\infty(\Omega_{11}\cup\Omega_{21})}
        &\leq C N^{-1}\quad\mbox{and}\quad
    \norm{\eps\delta}{L_\infty(\Omega_{11}\cup\Omega_{21})}\leq C N^{-2}.\label{eq:delta:estimate:2}
  \end{align}

  We obtain the modified SDFEM formulation of \eqref{eq:Lu}:\bigskip

  Find $u_{modSD}^N\in V^N$ such that
  \begin{equation}\label{eq:modSD_form}
    a_{modSD}(u_{modSD}^N, v^N):=a_{Gal}(u_{modSD}^N,v^N)+a_{stab}(u_{modSD}^N,v^N)
     = f_{modSD}(v^N),\quad \forall v^N\in V^N.
  \end{equation}
  Associated with this method is the modified streamline diffusion norm, defined by
  \begin{gather}\label{def:modsdnorm}
   \tnorm{v}_{modSD}:=\left(\eps\norm{\grad v}{0}^2+
                            \gamma\norm{v}{0}^2+
                            \sum_{\tau\in T^N}\norm{\delta_\tau^{1/2}bv_x}{0,\tau}^2\right)^{1/2}.
  \end{gather}
  Under similar conditions on $\delta_\tau$ as in \eqref{eq:delta_coer}
  we have coercivity in this norm:
  \begin{gather}\label{eq:modSD:coer}
    a_{modSD}(v^N,v^N)\geq\frac{1}{2}\tnorm{v^N}_{modSD},\qquad \forall v^N\in V^N.
  \end{gather}
  Let us now come to the error analysis in the balanced norm.
  Although the modified SDFEM is coercive w.r.t. the modified SDFEM-norm, it is not 
  uniformly coercive w.r.t. the balanced norm. 
  Therefore, we use an additional projection to prove the error estimates.
  
  Let a projection operator $\pi:H^1(\Omega)\cap C(\Omega)\to V^N$ be given by
  \begin{gather}\label{eq:def_projection}
   a_{proj}(\pi u-u,\chi)=0\quad\mbox{for all }\chi\in V^N
  \end{gather}
  where
  \[
    a_{proj}(v,w)=\eps(v_x,w_x) + (c v-b v_x, w)+\sum_{\tau\in T^N}(\eps v_{xx}+b v_x-c v,\delta_\tau b w_x)_\tau.
  \]
  The operator is defined in such a way, that for all $\chi\in V^N$ it holds
  \begin{gather}\label{eq:apply_piu}
   a_{modSD}(\pi u-u,\chi)
    = \eps((\pi u-u)_y,\chi_y)+\sum_{\tau\in T^N}(\eps (\pi u-u)_{yy},\delta_\tau b \chi_x)_\tau.
  \end{gather}
  Combining coercivity \eqref{eq:modSD:coer}, Galerkin orthogonality and
  \eqref{eq:apply_piu} gives
  \begin{multline*}
   \frac{1}{2}\tnorm{\pi u-u_{modSD}^N}^2_{modSD}
    \leq a_{modSD}(\pi u-u,\pi u-u_{modSD}^N)\\
    =    \eps((\pi u-u)_y,(\pi u-u_{modSD}^N)_y)
        +\sum_{\tau\in T^N}(\eps (\pi u-u)_{yy},\delta_\tau b (\pi u-u_{modSD}^N)_x)_\tau.
  \end{multline*}
  By omitting terms on the left-hand side, bounding the scalar product on the right-hand side
  by its $L_2$-norms, multiplying by $\eps^{-1/2}$ and setting $\chi:=\pi u-u_{modSD}^N\in V^N$ we obtain
  \begin{multline}\label{eq:optimal}
   \frac{1}{2}\norm{(\pi u-u_{modSD}^N)_y}{0}\tnorm{\chi}_{modSD}
        \leq\\
        \norm{(u-\pi u)_y}{0}\tnorm{\chi}_{modSD}+
        \eps^{-1/2}\left|\sum_{\tau\in T^N}(\eps (\pi u-u)_{yy},\delta_\tau b \chi_x)_\tau\right|.
  \end{multline}
  
  The goal is to bound the right-hand side of \eqref{eq:optimal} by 
  $\eps^{-1/4}$ times $\tnorm{\chi}_{modSD}$ and
  a term of order $N^{-1}$. This can be done, as shown in \cite{FrR11_2}
  with one main ingredient being the $L_\infty$-stability of $\pi$.
  
  \begin{thm}[Theorem 1 of \cite{FrR11_2}]\label{thm:modSDFEM}
   Let $\sigma\geq 2$, $\eps\leq C(\ln N)^{-2}$, $u_{modSD}^N$ be the discrete solution of \eqref{eq:modSD_form}
   and $u$ the weak solution of \eqref{eq:Lu}.
   Then it holds
   \[
     \tnorm{u-u_{modSD}^N}_{b}\leq C N^{-1}(\ln N)^{3/2}.
   \]
  \end{thm}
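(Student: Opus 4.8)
The plan is to continue from the reduction already set up: write the error as $u-u_{modSD}^N=(u-\pi u)+(\pi u-u_{modSD}^N)=:\eta+\chi$ with $\chi\in V^N$, and bound $\tnorm{u-u_{modSD}^N}_b\le\tnorm{\eta}_b+\tnorm{\chi}_b$ separately. For the discrete part I would use the elementary comparison $\tnorm{\chi}_b^2\le\tnorm{\chi}_{modSD}^2+\eps^{1/2}\norm{\chi_y}{0}^2$ (the $\eps\norm{\chi_x}{0}^2$ and $\gamma\norm{\chi}{0}^2$ parts of the balanced norm are already dominated by the modified SD-norm). Here $\tnorm{\chi}_{modSD}\le\tnorm{\pi u-u}_{modSD}+\tnorm{u-u_{modSD}^N}_{modSD}$ is controlled by the projection error together with the standard energy-type analysis of the modified SDFEM, which on the Shishkin mesh gives $\ord{N^{-1}\ln N}$ and hence contributes only $\ord{(N^{-1}\ln N)^2}$ to $\tnorm{\chi}_b^2$. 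The genuinely new estimate is the bound on $\norm{\chi_y}{0}$: coercivity \eqref{eq:modSD:coer}, Galerkin orthogonality and the identity \eqref{eq:apply_piu} give \eqref{eq:optimal}, and because the stabilisation cross-term on its right-hand side is itself bounded by a multiple of $\tnorm{\chi}_{modSD}$, dividing through by $\tnorm{\chi}_{modSD}$ yields $\norm{\chi_y}{0}\le C\norm{\eta_y}{0}+C\eps^{-1/2}R$, where $R$ abbreviates the bound for $\bigl|\sum_{\tau}(\eps(\pi u-u)_{yy},\delta_\tau b\chi_x)_\tau\bigr|/\tnorm{\chi}_{modSD}$. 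Since $\pi u$ and the bilinear nodal interpolant $u^I$ are both piecewise bilinear, $(\pi u-u)_{yy}=-u_{yy}$ elementwise, so $R$ involves only the known layer data of Assumption~\ref{ass:dec}.

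To estimate $\tnorm{\eta}_b$ and $\norm{\eta_y}{0}$ I would write $\eta=(I-\pi)(u-u^I)$ — legitimate because $a_{proj}$ is coercive on $V^N$, so $\pi$ reproduces $V^N$ — and work region by region over $\Omega_{11},\Omega_{12},\Omega_{21},\Omega_{22}$. On each cell the part $u-u^I$ is treated by standard anisotropic bilinear interpolation bounds fed with the pointwise estimates \eqref{eq:dec:C0} and the mesh widths following from \eqref{eq:ass:trans}--\eqref{eq:ass:eps} ($h\lesssim\eps N^{-1}\ln N$, $k\lesssim\sqrt\eps N^{-1}\ln N$ inside the layers). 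The correction $\pi(u-u^I)$ is bounded via the ($L_\infty$, hence global) stability $\norm{\pi g}{L_\infty(\tau)}\le C\norm{g}{L_\infty(\Omega)}$ combined with the per-cell inverse inequality $\norm{(\pi g)_y}{0,\tau}\le Ck_\tau^{-1}|\tau|^{1/2}\norm{\pi g}{L_\infty(\tau)}$; this is precisely where the strong anisotropy $h_\tau/k_\tau$ of the mesh in the characteristic-layer strip enters. The regular part $v$ contributes $\ord{N^{-2}}$; the exponential and corner layers $w_1,w_{12}$ are subdominant because the extra $\eps^{1/2}$ weight of the balanced norm beats their $\eps^{-1}$ factors; the characteristic layer $w_2$ on $\Omega_{21}$ is the decisive contribution and, after these interpolation and stability estimates and the weight $\eps^{1/2}$, produces exactly $\ord{N^{-1}(\ln N)^{3/2}}$.

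It remains to bound $R$, the stabilisation consistency term. On the characteristic-layer region a plain $\delta_\tau$-weighted Cauchy--Schwarz is too lossy, so instead I would integrate by parts in $x$ on each cell: since $\delta_\tau$ is a bubble in $x$ vanishing on the two vertical edges of $\tau$, this introduces no inner-boundary contributions and turns $\bigl(\eps(\pi u-u)_{yy},\delta_\tau b\chi_x\bigr)_\tau$ into a term of the form $\bigl(\eps\,\partial_x(u_{yy}\delta_\tau b),\chi\bigr)_\tau$, trading the uncontrolled $\chi_x$ for $\chi$ (bounded through $\gamma\norm{\chi}{0}^2\le\tnorm{\chi}_{modSD}^2$) at the cost of one more $x$-derivative on the data and on $\delta_\tau$. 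These terms are then estimated with the sharp pointwise bounds \eqref{eq:delta:estimate:1}--\eqref{eq:delta:estimate:2} on $\delta$ and $\eps\delta$ (and the analogous bound $\eps|(\delta_\tau)_x|\le CN^{-1}$ on the coarse $x$-mesh), again split over the four subregions and fed with \eqref{eq:dec:C0}, using $\eps\le C(\ln N)^{-2}$ to absorb the residual $\eps^{-1}$ factor that survives on $\Omega_{11}$. Collecting all contributions one obtains $\tnorm{\eta}_b^2+\tnorm{\chi}_{modSD}^2+\eps^{1/2}\norm{\chi_y}{0}^2\le CN^{-2}(\ln N)^3$, which is the assertion.

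The step I expect to be the main obstacle is exactly this last one: controlling the stabilisation cross-term on the characteristic-layer strip $\Omega_{21}$ without losing a negative power of $\eps$. The bubble structure of $\delta_\tau$ (for boundary-free integration by parts in $x$), the sharp $\delta$-bounds \eqref{eq:delta:estimate:1}--\eqref{eq:delta:estimate:2}, and the $L_\infty$-stability of $\pi$ must all cooperate; keeping precise track of which power of $\eps^{1/2}$ and which power of $\ln N$ each of the four subregions contributes — so that the characteristic layer dictates the rate $N^{-1}(\ln N)^{3/2}$ and no region does worse — is the delicate bookkeeping at the heart of the proof.
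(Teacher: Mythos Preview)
Your overall architecture matches the paper exactly: split $u-u_{modSD}^N=\eta+\chi$ with $\eta=u-\pi u$, $\chi=\pi u-u_{modSD}^N$; bound $\eps^{1/4}\norm{\eta_y}{0}$ via the $L_\infty$-stability of $\pi$ combined with an inverse inequality (writing $\eta=(I-\pi)(u-u^I)$); and bound $\eps^{1/4}\norm{\chi_y}{0}$ through~\eqref{eq:optimal}. That part is fine and is precisely the route taken in~\cite{FrR11_2}.

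The gap is in your treatment of the stabilisation cross-term
\[
   \sum_{\tau}(\eps(\pi u-u)_{yy},\delta_\tau b\chi_x)_\tau
   =-\sum_{\tau}(\eps u_{yy},\delta_\tau b\chi_x)_\tau.
\]
To close the argument you need this to be bounded by $C\eps^{1/4}N^{-1}(\ln N)^\alpha\tnorm{\chi}_{modSD}$; the factor $\eps^{1/4}$ is essential, since after division by $\tnorm{\chi}_{modSD}$ and multiplication by $\eps^{-1/2}$ it becomes the $\eps^{-1/4}$ that the balanced norm can absorb. Your integration by parts in~$x$ does not deliver this. After the IBP you obtain $(\eps(u_{yy}\delta b)_x,\chi)$, and the term $\eps u_{yy}(\delta_\tau)_x b$ is the obstruction: even with the correct sharp bound $\eps|(\delta_\tau)_x|\le CN^{-1}$ on the coarse $x$-mesh, for the characteristic-layer part $w_2$ on $\Omega_{21}$ you get
\[
   \bigl|\eps\,w_{2,yy}(\delta_\tau)_x b\bigr|
   \le CN^{-1}|w_{2,yy}|
   \le CN^{-1}\eps^{-1}e^{-y/\sqrt\eps},
\]
whose $L_2(\Omega_{21})$-norm is of order $\eps^{-3/4}N^{-1}$, not $\eps^{1/4}N^{-1}$. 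Paired with $\norm{\chi}{0}\le C\tnorm{\chi}_{modSD}$ this yields a contribution $C\eps^{-3/4}N^{-1}\tnorm{\chi}_{modSD}$, a full factor $\eps^{-1}$ away from what is required. The hypothesis $\eps\le C(\ln N)^{-2}$ is an \emph{upper} bound on~$\eps$ and therefore cannot rescue a negative power of~$\eps$; your remark about ``absorbing the residual $\eps^{-1}$ factor'' is backwards here. (A direct $\delta^{1/2}$-weighted Cauchy--Schwarz is, as you note, also insufficient on $\Omega_{21}$, but for a different reason: it loses half an order in~$N$.)

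So the single IBP in~$x$ is not the right tool on the characteristic strip. The argument in~\cite{FrR11_2} handles this term more delicately; one workable route is to integrate by parts in~$y$ first (transferring one derivative from $u_{yy}$ to $\chi_x$, which is legitimate since $\delta_\tau b\chi_x$ is continuous across horizontal edges and vanishes on $\partial\Omega$), and only then exploit the bubble structure of~$\delta_\tau$ in~$x$ on the resulting $\chi_{xy}$-term. This trades the bad factor $|w_{2,yy}|\sim\eps^{-1}$ for $|w_{2,y}|\sim\eps^{-1/2}$, which is exactly the half-power of~$\eps$ you are missing.
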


  \begin{rem}\label{rem:norm:general}
    The result of Theorem \ref{thm:modSDFEM} can in theory be generalised in the following way
    for S-type meshes and higher-order polynomials.
    Let a consistent numerical method be given by: 
    Find $\tilde u^N\in V^N=\{v\in H_0^1(\Omega):v|_\tau\in\QS_p(\tau),\,\tau\in T^N\}$ 
    with
    \[
     a_{Gal}(\tilde u^N,v^N)+a_{stab}(\tilde u^N,v^N)=f(v^N)+f_{stab}(v^N)\quad\mbox{ for }v^N\in V^N
    \]
    where $a_{stab}(\cdot,\cdot)$ is a bilinear form and $f_{stab}(\cdot)$ is a 
    linear form. 
    Suppose $a_{Gal}(\cdot,\cdot)+a_{stab}(\cdot,\cdot)$
    is coercive w.r.t. a norm $\tnorm{\cdot}$ that contains the energy norm.
    
    Define the projection $\pi u\in V^N$ by
    \[
      a_{proj}(\pi u,\chi)=a_{proj}(u,\chi)\quad\mbox{for all }\chi\in V^N
    \]
    where
    \[
      a_{proj}(u,v)=a_{Gal}(u,v)+a_{stab}(u,v)-\eps(u_y,v_y)-a_{rest}(u,v)
    \]
    for some suitable bilinear form $a_{rest}(\cdot,\cdot)$.
    Note that for our modified SDFEM we have 
    \[
        a_{rest}(u,v)=\sum_{\tau\in T^N}(\eps u_{yy},\delta_\tau b v_x)_\tau.
    \]
    In the general setting we obtain instead of \eqref{eq:optimal} the estimate
    \[
      \norm{(\pi u-\tilde u^N)_y}{0}\tnorm{\chi}
          \leq
          C\left(\norm{(u-\pi u)_y}{0}\tnorm{\chi}+
          \eps^{-1/2}|a_{rest}(\pi u-u,\chi)|\right).
    \]
    If we had the convergence result
    \[
     \enorm{u-\tilde u^N}\leq C (N^{-1}\max |\psi'|)^p,
    \]
    the stability result
    \[
     \norm{\pi u}{L_\infty}\leq C\norm{u}{L_\infty}
    \]
    and the estimate
    \[
     |a_{rest}(u-\pi u,\chi)|\leq C \eps^{1/4}(N^{-1}\max |\psi'|)^p(\ln N)^{1/2}\tnorm{\chi},
    \]
    then it would follow
    \[
     \tnorm{u-\tilde u^N}_{b}\leq C (N^{-1}\max |\psi'|)^p(\ln N)^{1/2},
    \]
    thus convergence of order $p$ in the balanced norm.
    
    While the adaptation of the proof for our modified SDFEM to S-type meshes is 
    straight-forward, higher-order polynomials are more problematic. To our knowledge,
    no result generalising the stability given in \cite{CX08} for linear elements
    to higher-order elements is available in literature.
    
    Setting $\delta_\tau\equiv 0$ everywhere gives the unstabilised Galerkin method. 
    Unfortunately, the corresponding projection $\pi$ is not known to be $L_\infty$-stable. 
    Thus, our method of proof does not help with the pure Galerkin method.
  \end{rem}

  \section{Numerical Results}\label{sec:numerics:modSDFEM}
  We use the test problem \eqref{eq:num_example} from Chapter~\ref{cha:results}, i.e.
  \[
   -\eps \laplace u - (2-x) u_x + \frac{3}{2} u = f
  \]
  with homogeneous Dirichlet boundary conditions and the right-hand side $f$ chosen such that
  \[
    u = \left(\cos(\pi x/2)-\frac{e^{-x/\eps}-e^{-1/\eps}}{1-e^{-1/\eps}}\right)
        \frac{(1-e^{-y/\sqrt\eps})(1-e^{-(1-y)/\sqrt\eps})}{1-e^{-1/\sqrt\eps}}
  \]
  is the exact solution. 
  
  In the following, 'order' will always denote the exponent $\alpha$ in a
  convergence order of form $\mathcal{O}(N^{-\alpha})$ while 'ln-order'
  corresponds to the exponent $\alpha$ in a convergence order given by
  $\mathcal{O}\big((N^{-1}\ln N)^\alpha\big)$. It is computed as usual using
  two consecutive numerical solutions.
  The experiments are carried out with
  $\sigma=5/2$ and all integrations are approximated by a Gauss-Legendre
  quadrature of $6\times 6$-points.

  In our first experiment we look into the $\eps$-uniformity of our calculations.
  Table~\ref{tab:uniform}%
  \begin{table}[tbp]
   \begin{center}
    \caption{$\eps$-uniformity of modSDFEM-errors for $N=64$\label{tab:uniform}}
    \begin{tabular}{r|c|c}
     \rule{0pt}{1.1em}$\eps$
         & $\tnorm{u-u^N_{modSD}}_b$
         & $\enorm{u-u^N_{modSD}}$   \\[2pt]
     \hline\rule{0pt}{1.1em}
     1.0e-01 & 1.932e-02 & 1.747e-02\\
     1.0e-02 & 6.356e-02 & 5.614e-02\\
     1.0e-03 & 1.181e-01 & 6.531e-02\\
     1.0e-04 & 1.462e-01 & 6.635e-02\\
     1.0e-05 & 1.465e-01 & 6.609e-02\\
     1.0e-06 & 1.466e-01 & 6.601e-02\\
     1.0e-07 & 1.466e-01 & 6.599e-02\\
     1.0e-08 & 1.466e-01 & 6.598e-02
    \end{tabular}
   \end{center}
  \end{table}
  shows the results of the modified SDFEM for fixed $N=64$ and varying 
  values of $\eps=10^{-1},\dots,10^{-8}$.
  In both norms we can clearly see $\eps$-uniformity,
  confirming Theorem~\ref{thm:modSDFEM}.
  Note that the errors measured in the balanced norm are larger than
  those measured in the energy norm, but still bounded for decreasing $\eps$.

  In the following we will always use the fixed value $\eps=10^{-6}$ that is 
  small enough to bring out the layer behaviour of the solution $u$ of \eqref{eq:Lu}.

  Table~\ref{tab:modSDFEM}%
  \begin{table}[tbp]
   \begin{center}
    \caption{Errors of the modSDFEM in the balanced and energy norm\label{tab:modSDFEM}}
    \begin{tabular}{r|crr|crr}
     \rule{0pt}{1.1em}$N$ & $\tnorm{u-u^N_{modSD}}_b$ & order & ln-order
         & $\enorm{u-u^N_{modSD}}$   & order & ln-order\\[2pt]
     \hline\rule{0pt}{1.1em}
       8 & 4.893e-01 & 0.43 & 0.74 & 2.464e-01 & 0.53 & 0.90 \\
      16 & 3.624e-01 & 0.60 & 0.88 & 1.707e-01 & 0.65 & 0.95 \\
      32 & 2.392e-01 & 0.71 & 0.96 & 1.090e-01 & 0.72 & 0.98 \\
      64 & 1.466e-01 & 0.77 & 0.99 & 6.601e-02 & 0.77 & 0.99 \\
     128 & 8.615e-02 & 0.80 & 1.00 & 3.864e-02 & 0.81 & 1.00 \\
     256 & 4.935e-02 & 0.83 & 1.00 & 2.211e-02 & 0.83 & 1.00 \\
     512 & 2.779e-02 & 0.85 & 1.00 & 1.244e-02 & 0.85 & 1.00 \\
    1024 & 1.544e-02 &      &      & 6.912e-03 &      &
    \end{tabular}
   \end{center}
  \end{table}
  shows the errors of the modified SDFEM in the given numerical example when $N$ is varied.
  Clearly we have convergence of almost order one in the balanced
  and the standard energy norm. Whether the exponent of the logarithmic factor is
  $1$ or $3/2$ cannot be decided from this experiment, as the numerical behaviour of 
  the two functions $N^{-1}\ln N$ and $N^{-1}(\ln N)^{3/2}$ is almost the same.
  Nevertheless, this table corresponds well with Theorem~\ref{thm:modSDFEM}.

  Table~\ref{tab:GFEM:bal}%
  \begin{table}[tbp]
   \begin{center}
    \caption{Errors of the Galerkin FEM in the balanced and energy norm\label{tab:GFEM:bal}}
    \begin{tabular}{r|crr|crr}
     \rule{0pt}{1.1em}$N$ & $\tnorm{u-u^N}_b$ & order & ln-order
         & $\enorm{u-u^N}$   & order & ln-order\\[2pt]
     \hline\rule{0pt}{1.1em}
       8 & 5.025e-01 & 0.45 & 0.78 & 2.686e-01 & 0.60 & 1.02 \\
      16 & 3.667e-01 & 0.61 & 0.90 & 1.778e-01 & 0.68 & 1.01 \\
      32 & 2.404e-01 & 0.71 & 0.96 & 1.108e-01 & 0.74 & 1.00 \\
      64 & 1.469e-01 & 0.77 & 0.99 & 6.640e-02 & 0.78 & 1.00 \\
     128 & 8.623e-02 & 0.80 & 1.00 & 3.872e-02 & 0.81 & 1.00 \\
     256 & 4.937e-02 & 0.83 & 1.00 & 2.212e-02 & 0.83 & 1.00 \\
     512 & 2.779e-02 & 0.85 & 1.00 & 1.244e-02 & 0.85 & 1.00 \\
    1024 & 1.544e-02 &      &      & 6.912e-03 &      &
    \end{tabular}
   \end{center}
  \end{table}
  shows the results of standard Galerkin FEM applied to our numerical example.
  Although we could not prove convergence for the Galerkin FEM in the balanced norm,
  we see convergence of almost order one in both norms.

  Let $u^I$ denote the standard bilinear interpolant of $u$.
  Tables~\ref{tab:modSDFEM:superclose}
  \begin{table}[tbp]
   \begin{center}
    \caption{Supercloseness errors of the modSDFEM in the balanced and energy norm\label{tab:modSDFEM:superclose}}
    \begin{tabular}{r|crr|crr}
     \rule{0pt}{1.1em}$N$ & $\tnorm{u^N_{modSD}-u^I}_b$ & order & ln-order
         & $\enorm{u^N_{modSD}-u^I}$   & order & ln-order\\[2pt]
     \hline\rule{0pt}{1.1em}
       8 & 1.097e-01 & 0.48 & 0.82 & 2.307e-02 & 2.18 & 3.73 \\
      16 & 7.881e-02 & 0.96 & 1.42 & 5.082e-03 & 1.30 & 1.92 \\
      32 & 4.044e-02 & 1.30 & 1.77 & 2.065e-03 & 1.41 & 1.91 \\
      64 & 1.641e-02 & 1.52 & 1.96 & 7.775e-04 & 1.54 & 1.98 \\
     128 & 5.705e-03 & 1.67 & 2.07 & 2.671e-04 & 1.65 & 2.04 \\
     256 & 1.794e-03 & 1.80 & 2.17 & 8.540e-05 & 1.73 & 2.08 \\
     512 & 5.162e-04 & 1.97 & 2.32 & 2.580e-05 & 1.77 & 2.09 \\
    1024 & 1.317e-04 &      &      & 7.559e-06 &      &
    \end{tabular}
   \end{center}
  \end{table}
  and \ref{tab:GFEM:bal:superclose}
  \begin{table}[tbp]
   \begin{center}
    \caption{Supercloseness errors of the Galerkin FEM in the balanced and energy norm\label{tab:GFEM:bal:superclose}}
    \begin{tabular}{r|crr|crr}
     \rule{0pt}{1.1em}$N$ & $\tnorm{u^N-u^I}_b$ & order & ln-order
         & $\enorm{u^N-u^I}$   & order & ln-order\\[2pt]
     \hline\rule{0pt}{1.1em}
       8 & 1.601e-01 & 0.73 & 1.24 & 1.107e-01 & 1.14 & 1.96 \\
      16 & 9.666e-02 & 1.04 & 1.53 & 5.010e-02 & 1.33 & 1.96 \\
      32 & 4.704e-02 & 1.31 & 1.77 & 1.997e-02 & 1.46 & 1.98 \\
      64 & 1.900e-02 & 1.49 & 1.91 & 7.252e-03 & 1.55 & 2.00 \\
     128 & 6.770e-03 & 1.59 & 1.97 & 2.473e-03 & 1.61 & 2.00 \\
     256 & 2.246e-03 & 1.65 & 1.99 & 8.075e-04 & 1.66 & 2.00 \\
     512 & 7.142e-04 & 1.69 & 2.00 & 2.552e-04 & 1.70 & 2.00 \\
    1024 & 2.207e-04 &      &      & 7.863e-05 &      &
    \end{tabular}
   \end{center}
  \end{table}
  show convergence of $u^N_{modSD}-u^I$ and $u^N-u^I$
  in both norms to be of almost second order.
  Thus, we have supercloseness and via a simple postprocessing, e.g.
  biquadratic interpolation on a macro mesh, a numerical solution that is almost
  second order superconvergent can be constructed, see e.g. \cite{ST03}.
  
  For this purpose assume $N$ to be divisible by 8. We construct 
  a macro mesh of the original mesh by fusing 2-by-2 elements such that the
  macro elements are pairwise disjoint and do not cross the boundaries of the
  subdomains $\Omega_{ij}$, $i,j=1,2$, see also Figure~\ref{fig:post:mesh}.
  Tables \ref{tab:modSDFEM:post}
  \begin{table}[tbp]
   \begin{center}
    \caption{Superconvergence errors of the modSDFEM in the balanced and energy norm\label{tab:modSDFEM:post}}
    \begin{tabular}{r|crr|crr}
     \rule{0pt}{1.1em}$N$ & $\tnorm{u-Pu^N_{modSD}}_b$ & order & ln-order
                          & $\enorm{u-Pu^N_{modSD}}$   & order & ln-order\\[2pt]
     \hline\rule{0pt}{1.1em}
       8 & 1.298e-01 & 0.94 & 1.60 & 3.317e-01 & 0.64 & 1.09 \\
      16 & 6.773e-02 & 1.22 & 1.80 & 2.132e-01 & 0.97 & 1.43 \\
      32 & 2.911e-02 & 1.41 & 1.91 & 1.087e-01 & 1.27 & 1.73 \\
      64 & 1.095e-02 & 1.53 & 1.97 & 4.502e-02 & 1.48 & 1.90 \\
     128 & 3.787e-03 & 1.61 & 1.99 & 1.617e-02 & 1.59 & 1.98 \\
     256 & 1.244e-03 & 1.66 & 2.00 & 5.353e-03 & 1.66 & 2.01 \\
     512 & 3.942e-04 & 1.70 & 2.00 & 1.688e-03 & 1.71 & 2.02 \\
    1024 & 1.217e-04 &      &      & 5.145e-04 &      &     
    \end{tabular}
   \end{center}
  \end{table}
  and \ref{tab:GFEM:bal:post}
  \begin{table}[tbp]
   \begin{center}
    \caption{Superconvergence errors of the Galerkin FEM in the balanced and energy norm\label{tab:GFEM:bal:post}}
    \begin{tabular}{r|crr|crr}
     \rule{0pt}{1.1em}$N$ & $\tnorm{u-Pu^N}_b$ & order & ln-order
                          & $\enorm{u-Pu^N}$   & order & ln-order\\[2pt]
     \hline\rule{0pt}{1.1em}
       8 & 1.740e-01 & 1.02 & 1.74 & 3.549e-01 & 0.68 & 1.16\\
      16 & 8.602e-02 & 1.26 & 1.87 & 2.217e-01 & 0.99 & 1.46\\
      32 & 3.580e-02 & 1.44 & 1.95 & 1.118e-01 & 1.28 & 1.73\\
      64 & 1.321e-02 & 1.54 & 1.99 & 4.615e-02 & 1.48 & 1.90\\
     128 & 4.529e-03 & 1.61 & 2.00 & 1.660e-02 & 1.59 & 1.97\\
     256 & 1.482e-03 & 1.66 & 2.00 & 5.526e-03 & 1.65 & 1.99\\
     512 & 4.691e-04 & 1.70 & 2.00 & 1.760e-03 & 1.69 & 2.00\\
    1024 & 1.447e-04 &      &      & 5.441e-04 &      &     
    \end{tabular}
   \end{center}
  \end{table}
  show the resulting errors after applying a biquadratic interpolation $P$ to
  the discrete solutions on a macro mesh. It can be seen quite clearly,
  that $u-Pu^N_{modSD}$ and $u-Pu^N$ achieve (almost) second order
  convergence for both methods and in both norms.

   \chapter{Green's Function Estimates}\label{cha:green}
Another norm that ``sees'' all features of the solution is the $L_\infty$-norm.
In this chapter we want to look into pointwise \emph{a-posteriori} error estimation.
A-priori error estimation in the $L_\infty$-norm for convection-diffusion problems
is still an open field of research.
Some results for stabilised methods can be found in e.g.~\cite[p. 399]{RST08} or
\cite[Theorem 9.1]{Linss10} for an upwind finite difference method.

This chapter contains results from \cite{FrK12_1, FrK10_1} that are also
given in Appendix \ref{app:green} and \ref{app:green_sharp}.

\section{$L_1$-Norm Estimates of the Green's Function}
   Let us rewrite problem \eqref{eq:Lu} in a slightly different form:
   \begin{subequations}\label{eq:Lu'}
   \begin{align}
   \label{eq:Lu_a}
      L_{xy}u(x,y):=-\eps(u_{xx}+u_{yy})-(b(x,y)\,u)_x+c(x,y)\,u&=f(x,y)\quad
      \mbox{for }(x,y)\in\Omega,\\
      u(x,y)&=0\qquad\quad\;\,\mbox{for }(x,y)\in\partial\Omega
   \end{align}
   \end{subequations}
   where the coefficients $b$ and $c$ are sufficiently smooth (e.g., $b,\,c\in C^\infty(\bar\Omega)$). 
   Let us also assume, for some positive constant $\beta$, that
   \[
      b(x,y)\geq \beta>0,
      \qquad c(x,y)-b_x(x,y)\geq 0
      \qquad\mbox{for~all~}(x,y)\in\bar\Omega.
   \]
   Note that $b$ has a different meaning here compared with the previous chapters.
   
   We are interested in estimates of the Green's function $G(x,y;\xi,\eta)$
   associated with problem~\eqref{eq:Lu'}. For each fixed $(x,y)\in\Omega$, it satisfies
   the adjoint problem
   \begin{align*}
     L^*_{\xi\eta}G(x,y;\xi,\eta)
          =-\eps(G_{\xi\xi}+G_{\eta\eta})\!+\!b(\xi,\eta)G_\xi\!+\!c(\xi,\eta)G
         &=\delta(x-\xi)\,\delta(y-\eta),\,(\xi,\eta)\in\Omega,\\
     G(x,y;\xi,\eta)
         &=0,\hspace{75pt}(\xi,\eta)\in\partial\Omega.
   \end{align*}
   Here $L^*_{\xi\eta}$ is the adjoint differential operator to $L_{xy}$,
   and $\delta(\cdot)$ is the one-dimensional Dirac $\delta$-distribution.
   Figure~\ref{fig:Green}
   \begin{figure}[ht]
      \centerline{\includegraphics[width=0.7\textwidth]{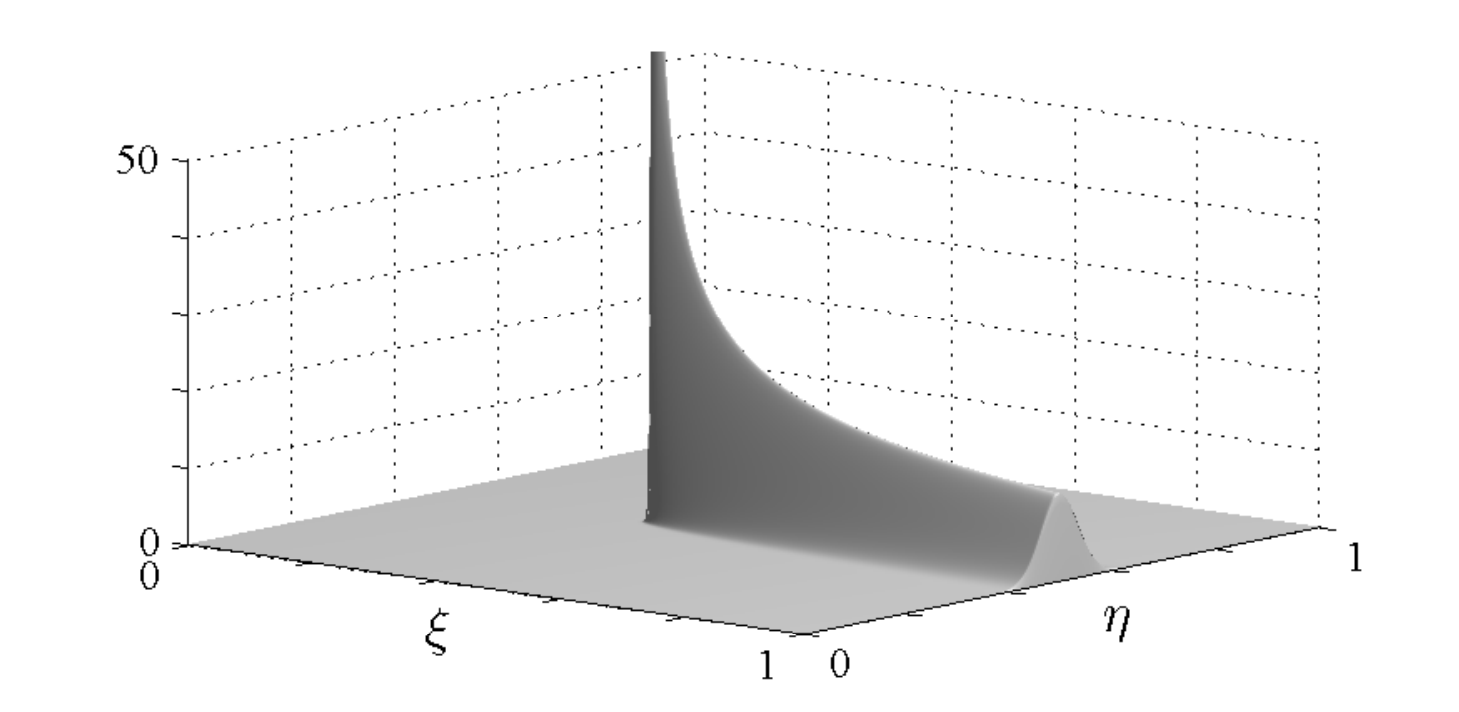}}
      \caption{Typical behaviour of the Green's function $G(\frac13,\frac12;\cdot,\cdot)$
               for problem \eqref{eq:Lu'} with $b=1$, $c=0$ and $\eps=10^{-3}$.}
             \label{fig:Green}
   \end{figure}
   shows a representation of a Green's function $G(1/3,1/2;\cdot,\cdot)$
   for a small value of $\eps=10^{-3}$ and coefficients $b=1$ and $c=0$.
   The singularity at $(\xi,\eta)=(x,y)$ and strong anisotropic behaviour
   of $G$ can be seen quite nicely. Near the boundary $\xi=1$ the Green's
   function has a strong boundary layer -- an outflow boundary layer.
   
   The unique solution $u$ of \eqref{eq:Lu'} has the representation
   \begin{gather}\label{eq:sol_prim}
     u(x,y)=\iint_{\Omega}G(x,y;\xi,\eta)\,f(\xi,\eta)\,d\xi\, d\eta.
   \end{gather}

   Our goal is to use \eqref{eq:sol_prim} and $L_1$-norm estimates of $G$
   to obtain pointwise error bounds of $u-u^N$, where $u^N$ is the numerical solution
   of a certain method. By using this idea, we get \emph{a-posteriori} error bounds
   with computable terms.

   In a more general numerical-analysis context, 
   we note that sharp estimates for continuous Green's functions (or
   their generalised versions) frequently play a crucial role in a
   priori and a posteriori error analyses \cite{erikss,Leyk,notch}.

   The main result for $L_1$-norm bounds on $G$ is the following from~\cite{FrK12_1}.

   \begin{thm}[Theorem 2.2 of \cite{FrK12_1}]\label{thm:green:main}
      Let $\eps\in(0,1]$.
      The Green's function $G(x,y;\xi,\eta)$ associated with \eqref{eq:Lu'}
      on the unit square $\Omega=(0,1)^2$  satisfies, for any $(x,y)\in\Omega$,
      the following bounds
      \begin{subequations}
      \begin{align}
\label{eq:green:G_xi}
         \norm{\pt_\xi  G(x,y;\cdot)}{L_1(\Omega)}
            &\leq C(1+|\ln \eps|),&
         \norm{\pt_\eta G(x,y;\cdot)}{L_1(\Omega)}
            &\leq C\eps^{-1/2}.
      \end{align}
      Furthermore, for any ball $B(x',y';\rho)$ of radius $\rho$
      centred at any $(x',y')\in\bar\Omega$, we have
      \begin{align}
         \norm{G(x,y;\cdot)}{W_1^1(B(x',y';\rho))}
            &\leq C\eps^{-1}\rho,\label{eq:green:G_grad}
      \end{align}
      while for the ball $B(x,y;\rho)$ of radius $\rho$ centred at $(x,y)$
      we have
      \begin{align}
         \norm{\pt^2_{\xi}  G(x,y;\cdot)}{L_1(\Omega\setminus B(x,y;\rho))}
            &\leq C\eps^{-1}\ln(2+\eps/\rho),\label{eq:green:G_xixi}\\
         \norm{\pt^2_{\eta}G(x,y;\cdot)}{L_1(\Omega\setminus B(x,y;\rho))}
            &\leq C\eps^{-1}(\ln(2+\eps/\rho)+|\ln\eps|).\label{eq:green:G_etaeta}
      \end{align}
      \end{subequations}
   \end{thm}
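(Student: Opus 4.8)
The plan is to combine three ingredients: a comparison of $G(x,y;\cdot,\cdot)$ near the source point $(x,y)$ with an explicit fundamental solution, barrier functions (maximum principle) for the homogeneous adjoint equation away from the source, and dyadic interior elliptic estimates that convert pointwise information on $G$ into $L_1$-bounds on its derivatives. As in \cite{FrK10_1}, one may first carry everything out for the operator with $b,c$ frozen at a convenient point and then transfer the bounds to the variable-coefficient case by a perturbation argument that is uniform in $\eps$; alternatively the smooth coefficients can be kept inside the barriers. First I would record an elementary fact: since $G\ge0$ and $\iint_\Omega G(x,y;\xi,\eta)\,d\xi\,d\eta = w(x,y)$ with $L_{xy}w=1$, $w|_{\pt\Omega}=0$, the maximum principle gives $\norm{G(x,y;\cdot)}{L_1(\Omega)}\le C$. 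Then I would establish two pointwise ingredients. (A) Near the source, with $r=\bigl((\xi-x)^2+(\eta-y)^2\bigr)^{1/2}$: the fundamental-solution behaviour $|G|\le C\eps^{-1}\bigl(1+\bigl|\ln(r/\eps)\bigr|\bigr)$ for $r\lesssim\eps$, and $|\pt_\xi G|+|\pt_\eta G|\le C\eps^{-1}r^{-1}$, $|\pt^2_\xi G|+|\pt^2_\eta G|\le C\eps^{-1}r^{-2}$ on the corresponding annuli, obtained by subtracting the free-space Green's function of $-\eps\laplace+b(x,y)\pt_\xi$ and applying local regularity to the smoother remainder. (B) In the far field, where $L^*_{\xi\eta}G=0$: pointwise decay of $G$ from barriers of $C\eps^{-1}\e^{b(\xi-x)/(2\eps)}K_0\bigl(\tfrac{b}{2\eps}r\bigr)$-type, plus an outflow-layer barrier $\propto\e^{-\beta(1-\xi)/\eps}$ near $\xi=1$ and characteristic-layer barriers near $\eta=0,1$, upgraded to bounds on $\grad G$ and $\grad^2 G$ by interior estimates on a dyadic family of $\eps\times\eps$ (respectively $\eps\times1$, $\sqrt\eps\times1$ in the layer regions) cells.

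The heart of the matter is the intermediate \emph{parabolic wake} behind the source, i.e. the region $\{\,\xi\ge x,\ |\eta-y|\lesssim\sqrt{\eps(\xi-x)}\,\}$, where $G$ is neither a pure point singularity nor a pure exponential layer but has the characteristic-layer profile $G\sim\eps^{-1}\sqrt{\eps/r}\,\e^{-(\eta-y)^2/(2\eps(\xi-x))}$ of amplitude up to $\ord{\eps^{-1/2}}$. Here I would use a tailored anisotropic barrier encoding exactly this profile and differentiate it. Integrating the resulting bounds over the wake produces precisely the anomalous terms: $\int|\pt_\xi G|\sim\int_\eps^1 (\xi-x)^{-1}\,d(\xi-x)\sim|\ln\eps|$ and $\int|\pt_\eta G|\sim\int_0^1\eps^{-1/2}\,d(\xi-x)\sim\eps^{-1/2}$, while $\int_{\Omega\setminus B(x,y;\rho)}|\pt^2_\xi G|$ from the wake stays $\ord{\eps^{-1}}$ (no extra logarithm, because of the convective structure in $\xi$), whereas $\int_{\Omega\setminus B(x,y;\rho)}|\pt^2_\eta G|$ from the wake picks up the extra $\ord{\eps^{-1}|\ln\eps|}$ visible in \eqref{eq:green:G_etaeta}. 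The asymmetry between \eqref{eq:green:G_xixi} and \eqref{eq:green:G_etaeta} is exactly this wake asymmetry.

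Assembling the pieces: the near-source ball $B(x,y;\eps)$ contributes $\int_{B(x,y;\eps)}\eps^{-1}r^{-1}\,dA=\ord{1}$ to the first-derivative $L_1$-norms and, more generally, $\int_{B(x,y;\eps)\setminus B(x,y;\rho)}\eps^{-1}r^{-2}\,dA\sim\eps^{-1}\ln(2+\eps/\rho)$ to the second-derivative norms on $\Omega\setminus B(x,y;\rho)$; the outflow and characteristic boundary layers are subsumed in the barriers of step (B) and, being exponentially small away from $(x,y)$, contribute only at lower order; everything else decays exponentially. This gives \eqref{eq:green:G_xi}, \eqref{eq:green:G_xixi} and \eqref{eq:green:G_etaeta}. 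In fact the $\pt_\xi$-part of \eqref{eq:green:G_xi} can also be obtained a posteriori: away from the source $b\,\pt_\xi G=\eps\laplace G-cG$, so $\norm{\pt_\xi G}{L_1(\Omega\setminus B(x,y;\eps))}\le\beta^{-1}\bigl(\eps\norm{\laplace G}{L_1(\Omega\setminus B(x,y;\eps))}+\norm{c}{L_\infty}\norm{G}{L_1(\Omega)}\bigr)\le C(1+|\ln\eps|)$ by \eqref{eq:green:G_xixi}--\eqref{eq:green:G_etaeta} with $\rho=\eps$, and the $B(x,y;\eps)$-part is $\ord{1}$ by (A). For \eqref{eq:green:G_grad} the same dyadic decomposition — annuli about the source together with an exponentially decaying remainder — gives $\norm{G}{W^1_1(B(x',y';\rho))}\le C\eps^{-1}\rho$ uniformly in the centre $(x',y')$; this is essentially the weak-type (Lorentz $L^{2,\infty}$) statement $\norm{\grad G}{L^{2,\infty}(\Omega)}+\norm{G}{L^{2,\infty}(\Omega)}\le C\eps^{-1}$, of which $\eps^{-1}r^{-1}$ near the source is the extremal profile.

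I expect the main obstacle to be the sharp analysis in the wake and, above all, the clean matching of the three regimes — $r\lesssim\eps$, the parabolic wake $\eps\lesssim r\lesssim1$, and the boundary-layer zones — so that the logarithms appear with exponent exactly $1$ (in $\pt_\xi$ and $\pt^2_\eta$) and not worse, and the $\eps$-powers are exactly $\eps^{-1/2}$ for $\pt_\eta$ and $\eps^{-1}$ for $\pt^2_\xi$. A secondary difficulty is that $L^*_{\xi\eta}$ is not self-adjoint, so the comparison principle and the construction of sign-definite barriers must be handled under the standing conditions $b\ge\beta>0$, $c-b_\xi\ge0$ (after the usual exponential rescaling in $\xi$ if needed), and the freezing-coefficient perturbation must be shown to introduce no $\eps$-dependent constants.
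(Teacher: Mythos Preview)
Your proposal is correct and follows essentially the same route as the paper. The paper does not reproduce a full proof here but cites \cite{FrK12_1} and sketches the method: freeze the coefficients, use the explicit fundamental solution $\bar g=(2\pi\eps)^{-1}\e^{q\hat\xi}K_0(q\hat r)$ of the constant-coefficient adjoint problem, and then pass to variable coefficients by a perturbation (parametrix) argument. Your plan --- compare $G$ with this $K_0$-type fundamental solution near the source, use $K_0$-based barriers in the far field (including the parabolic wake $\{\xi>x,\ |\eta-y|\lesssim\sqrt{\eps(\xi-x)}\}$), and upgrade pointwise bounds to $L_1$-bounds on derivatives via interior regularity on suitably scaled cells --- is exactly this programme, and your identification of the wake as the source of the $|\ln\eps|$ in $\pt_\xi G$ and of the extra $\eps^{-1}|\ln\eps|$ in $\pt^2_\eta G$ matches the asymptotics of $\bar g$. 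The only point to watch is the one you already flag: the matching of the three regimes and the control of the frozen-to-variable perturbation must be done so that no spurious $\eps$-dependent constants creep in; in \cite{FrK12_1} this is handled by carrying the full $K_0$-barrier through the perturbation rather than by separate maximum-principle arguments in each zone.
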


   Let us compare the first order results to those obtained in one dimension, see e.g.
   \cite[Theorems 3.23, 3.31]{Linss10}.
   Here we have for the Green's function $g^{cd}$ of a convection-diffusion problem
   \[
    \norm{\pt_\xi  g^{cd}(x;\cdot)}{L_1(\Omega)}\leq C
   \]
   and for $g^{rd}$ of a reaction-diffusion problem
   \[
    \norm{\pt_\xi  g^{rd}(x;\cdot)}{L_1(\Omega)}\leq C\eps^{-1/2}.
   \]
   Comparing these results with the results of Theorem~\ref{thm:green:main},
   we see an additional dependence on $|\ln\eps|$ in the streamline derivative.
   Thus the question for sharpness of these estimates is legitimate.
   In \cite{FrK10_1} it is shown that above bounds are sharp w.r.t. $\eps$.

   \begin{thm}[Theorem 3 of \cite{FrK10_1}]\label{thm:green:main_lower}
      Let $\eps\in(0,c_0]$ for some sufficiently small positive $c_0$.
      The Green's function $G$ associated with~the constant-coefficient
      problem \eqref{eq:Lu'}
      in the unit square $\Omega=(0,1)^2$ satisfies,
      for all $(x,y)\in[\frac14,\frac34]^2$,
      the following \underline{lower bounds}:
      There exists a constant $\underline{c}>0$ independent of $\eps$ such that
      \begin{subequations}\label{eq_thm:main_lower}
      \begin{align}
         \norm{\pt_{\xi} G(x,y;\cdot)}{L_1(\Omega)}
            &\geq \underline{c}|\ln \eps|,&
         \norm{\pt_{\eta} G(x,y;\cdot)}{L_1(\Omega)}
            &\geq \underline{c}\eps^{-1/2}.
      \end{align}
      {Furthermore, for any ball $B(x,y;\rho)$ of radius $\rho\le\frac18$, we have}
      \begin{align}
         \norm{G(x,y;\cdot)}{W_1^1(\Omega\cap B(x,y;\rho))}
            &\geq \begin{cases}
                     \underline{c}\rho/\eps, & \mbox{if~}\rho\le 2\eps,\\
                     \underline{c}(\rho/\eps)^{1/2},&\mbox{otherwise},\\
                  \end{cases}\\
         \norm{\pt^2_{\xi} G(x,y;\cdot)}{L_1(\Omega\setminus B(x,y;\rho))}
                       &\geq \underline{c}\eps^{-1}\ln(2+\eps/\rho),
           \quad&&\mbox{if~}\rho\le c_1\eps,
            \\
            \norm{\pt^2_{\eta} G(x,y;\cdot)}{L_1(\Omega\setminus B(x,y;\rho))}
            &\geq \underline{c}\eps^{-1}(\ln(2+\eps/\rho)+|\ln\eps|),
           &&\mbox{if~}\rho\le{\textstyle\frac18},
      \end{align}
      where $c_1$ is a sufficiently small positive constant.
      \end{subequations}
   \end{thm}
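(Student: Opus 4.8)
The plan is to exploit the constant coefficients to peel off the convection. Write $G(x,y;\xi,\eta)=e^{b(\xi-x)/(2\eps)}\,\bar G(x,y;\xi,\eta)$; substituting into $L^*_{\xi\eta}G=\delta$ kills the first–order term, and one checks directly that $\bar G$ is the Green's function of the self–adjoint operator $-\eps\laplace+\mu^2$ on $\Omega$, with $\mu^2=\tfrac{b^2}{4\eps}+c$ and the same homogeneous Dirichlet data. Its free–space fundamental solution is the explicit function $\tfrac1{2\pi\eps}K_0(\kappa r)$, where $r$ is the Euclidean distance to $(x,y)$, $K_0$ is the modified Bessel function, and $\kappa:=\mu\eps^{-1/2}=\tfrac b{2\eps}(1+\ord{\eps})$. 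Since the pole lies at distance $\ge\tfrac14$ from $\partial\Omega$ while $\kappa\sim\eps^{-1}$, a barrier/maximum–principle argument shows that the Dirichlet corrector of $\bar G$, together with all of its derivatives, is exponentially small in $\eps^{-1}$ throughout the interior; hence every norm in the theorem may be bounded below by the corresponding norm of the explicit function $e^{b(\xi-x)/(2\eps)}\tfrac1{2\pi\eps}K_0(\kappa r)$, the discrepancy between $\kappa$ and $\tfrac b{2\eps}$ affecting only the constants. From here everything reduces to sharp two–sided estimates for $K_0,K_1$ and their derivatives, both for $\kappa r\ll1$ (logarithmic behaviour) and for $\kappa r\gg1$ (the asymptotics $K_\nu(z)\sim\sqrt{\pi/2z}\,e^{-z}$ together with their first correction), combined with elementary integration.

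The bounds are then harvested from two model regions. First, the punctured disk $\{\rho\le r\le c\eps\}$ about $(x,y)$: there $\bar G\sim-\tfrac1{2\pi\eps}\ln(\kappa r)$, so $|\grad G|\sim\eps^{-1}r^{-1}$ and $|\partial_\xi^2 G|,|\partial_\eta^2 G|\sim\eps^{-1}r^{-2}$ on cones of fixed aperture (where $|\xi-x|\le\tfrac12 r$, resp.\ $|\eta-y|\le\tfrac12 r$). Integrating $\eps^{-1}r^{-1}\,dr$ over this annulus produces the factor $\eps^{-1}\ln(2+\eps/\rho)$ in the second–derivative bounds — this is why $\rho\le c_1\eps$ is needed for the annulus to be non-degenerate — and, using $\partial_\eta G$ on the second cone, the factor $\rho/\eps$ for the gradient when $\rho\le2\eps$. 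Second, the downstream parabolic wake $W=\{x<\xi<x+c_0,\ |\eta-y|\lesssim\sqrt{\eps(\xi-x)}\}$, where the factor $e^{b(\xi-x)/(2\eps)}$ exactly compensates the $e^{-\kappa r}$ decay of the Bessel part: on the axis $\eta=y$ one obtains $G\sim(\eps(\xi-x))^{-1/2}$, $|\partial_\eta G|$ of the same order divided by the transverse width, $|\partial_\eta^2 G|\sim\eps^{-3/2}(\xi-x)^{-3/2}$, and so on. Integrating transversally over the window of width $\sqrt{\eps(\xi-x)}$ and then in $s=\xi-x$ gives $\int(\eps s)^{-1/2}\,ds\sim\eps^{-1/2}$ for $\norm{\partial_\eta G}{L_1(\Omega)}$, the same integral with upper limit $\rho$ in place of $c_0$ giving $(\rho/\eps)^{1/2}$ for $\norm{G}{W_1^1(B_\rho)}$ when $\rho>2\eps$; $\int_{C\eps}^{c_0}s^{-1}\,ds\sim|\ln\eps|$ for $\norm{\partial_\xi G}{L_1(\Omega)}$; and $\int_{\max(\rho,C\eps)}^{c_0}(\eps s)^{-1}\,ds\sim\eps^{-1}\bigl(|\ln\eps|+\ln(2+\eps/\rho)\bigr)$ for $\norm{\partial_\eta^2 G}{L_1(\Omega\setminus B_\rho)}$ — the logarithm arising precisely because the wake reaches down to distance $\sim\eps$ from the pole.

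The step I expect to be the genuine obstacle is the streamline derivative: the bound $\norm{\partial_\xi G}{L_1(\Omega)}\ge\underline c\,|\ln\eps|$ and the $|\ln\eps|$-part of $\norm{\partial_\eta^2 G}{L_1}$. Here the pointwise size of $\partial_\xi G$ is a full factor $\eps^{-1}$ larger than the claimed $L_1$-norm, because $\partial_\xi$ of the dominant factor $e^{b(\xi-x)/(2\eps)}$ is all but cancelled by $\partial_\xi$ of the Bessel part: one computes $\int_{\R}\partial_\xi G\,d\eta=\ord{1}$ in $\xi$, so the $L_1$-norm cannot be read off the signed integral and must instead be extracted from the incompletely–cancelled negative part. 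Writing $\partial_\xi G=e^{b(\xi-x)/(2\eps)}\tfrac\kappa{2\pi\eps}[K_0(\kappa r)-\tfrac{\xi-x}{r}K_1(\kappa r)]$, this quantity is negative on $|\eta-y|\lesssim\sqrt{\eps(\xi-x)}$ and changes sign near the edge of that window, and one needs the \emph{sharp} difference estimate $K_1(z)-K_0(z)=\ord{z^{-1}K_1(z)}$ (the first Bessel correction), not merely an order-of-magnitude bound, in order to prove $\int_{|\eta-y|\lesssim\sqrt{\eps(\xi-x)}}|\partial_\xi G|\,d\eta\gtrsim(\xi-x)^{-1}$; integrating this over $\xi-x\in[C\eps,c_0]$ then yields the logarithm. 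The same mechanism, with an extra $\eps^{-1}$ supplied by the second $\eta$-derivative, produces the $|\ln\eps|$ in $\partial_\eta^2 G$. Everything else — the near-pole annulus integrals, the $\eps^{-1/2}$ and $(\rho/\eps)^{1/2}$ wake integrals, and the verification that for $(x,y)\in[\tfrac14,\tfrac34]^2$ all the regions used lie inside $\Omega$ at a fixed distance from $\partial\Omega$ so that the Dirichlet corrector is negligible — is routine once the Bessel estimates and this sign structure are in place.
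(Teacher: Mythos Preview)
Your proposal is correct and follows essentially the same route as the paper: the thesis itself only sketches the argument, but that sketch is precisely the transformation $G=e^{q\hat\xi_1}\bar G$ to the reaction--diffusion problem (attributed to~\cite{KSh87}) and the explicit Bessel fundamental solution $\bar g_2=\tfrac{1}{2\pi\eps}e^{q\hat\xi_1}K_0(q\hat r)$, with the lower bounds then extracted from the near-pole and wake regions exactly as you describe. Your identification of the cancellation in $\partial_\xi G$ and the need for the sharp correction $K_1(z)-K_0(z)=\ord{z^{-1}K_1(z)}$ to recover the $|\ln\eps|$ is the right diagnosis of the only genuinely delicate step; one small caveat is that your corrector argument via the maximum principle for $\bar G$ must be paired with the restriction $\xi-x\le c_0<\tfrac14$ on the wake (which you do impose), since multiplying the exponentially small $\bar v$ back by $e^{b(\xi-x)/(2\eps)}$ would otherwise destroy the smallness near the outflow boundary.
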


   Not that the restriction $(x,y)\in[\frac14,\frac34]^2$ can be replaced by
   $(x,y)\in[\theta,1-\theta]^2$ with $\theta\in(0,\frac12)$. Doing so, we
   have to replace $\rho\le\frac18$ by $\rho\le\frac12\theta$.

   Above results have been proved in 2d and 3d in \cite{FrK12_1, FrK10_1, FrK11}.
   The basic idea is to look at a frozen coefficient version of \eqref{eq:Lu'}
   and to analyse the behaviour of its fundamental solution and of the difference
   to the fundamental solution of the original problem. This approach is sometimes
   called \emph{parametrix method}.
   The results can be generalised to arbitrary dimensions, say $n\in\N$.
   In order to do so, let us denote by $\ve{x}=(x_1,x_2,\dots,x_n)$
   a vector in $\R^n$ and by $K_s$ the modified Bessel function of second kind
   and order $s$ with $s\in\R$.
   
   The basic idea is to look at the fundamental solution of
   \begin{align}
     \bar L^*_{\ve\xi}\bar g(\ve{x};\ve\xi)
         =-\eps\laplace_{\ve\xi}\bar g+b(\ve{x})\bar g_{\xi_1}
        &=\delta(\ve{x}-\ve\xi),\quad \ve\xi\in\R^n\label{eq:conv_Green_adj_const}
   \end{align}
   where $\delta(\cdot)$ is the $n$-dimensional Dirac-distribution. 
   For fixed $\ve{x}$ the coefficient $b(\ve{x})$ in \eqref{eq:conv_Green_adj_const}
   is constant and we can solve the problem explicitly.
   To simplify our presentation, let $q=\frac{1}{2}b(\ve{x})$ for fixed $\ve{x}\in(0,1)^n$.
   Now a transformation, see \cite{KSh87}, can be used to change the type of the problem
   from convection-diffusion to reaction-diffusion.
   For reaction-diffusion problems with constant coefficients the fundamental solution
   is known and we obtain the fundamental solution of
   \eqref{eq:conv_Green_adj_const} as
   \[
      \bar g(\ve{x};\ve\xi)
       =\frac{1}{(2\pi)^{n/2}\eps^{n-1}}\,\left(\frac{q}{\hat r}\right)^{n/2-1}e^{q\hat\xi_1}K_{n/2-1}(q\hat r),
              \qquad
       q=q(\ve{x})={\textstyle\frac{1}{2}}b(\ve{x})
   \]
   where $\hat r=\norm{\ve\xi-\ve{x}}{2}/\eps$ and $\hat\xi_k = (\xi_k-x_k)/\eps$.
   Note that for $n=2$ we obtain
   \[
      \bar g_2(x,y;\xi,\eta)
       =\frac{1}{2\pi\eps}\,e^{q\hat\xi_1}K_{0}(q\hat r),
              \qquad
       q=q(x,y)={\textstyle\frac{1}{2}}b(x,y),
   \]
   the fundamental solution used in \cite{FrK12_1} and for $n=3$
   \[
      \bar g_3(\ve{x};\ve\xi)
       =\frac{1}{(2\pi)^{3/2}\eps^2}\,\left(\frac{q}{\hat r}\right)^{1/2}e^{q\hat\xi_1}K_{1/2}(q\hat r)
       =\frac{1}{4\pi\eps^2}\,\frac{e^{q(\xi_1-x_1-r)/\eps}}{\hat r},
        \qquad
       q=q(\ve{x})={\textstyle\frac{1}{2}}b(\ve{x}),
   \]
   the fundamental solution used in \cite{FrK11}.
   
   The modified Bessel functions $K_s$ of order $s$ and those of
   order zero behave asymptotically very similar, see
   \cite[Sections 10.25 to 10.60]{NIST10}.
   Therefore, to modify the analysis presented in \cite{FrK12_1, FrK10_1, FrK11}
   to the $n$-dimensional case is straightforward, though tedious and
   we obtain the analogue to Theorem~\ref{thm:green:main} and
   \ref{thm:green:main_lower} also in the $n$-dimensional case.

\section{A-Posteriori Error Estimation}

   Here we want to apply the $L_1$-norms of the Green's function and derive \emph{a-posteriori}
   error estimates in the $L_\infty$-norm. The analysis following is from the forthcoming
   paper~\cite{FrK09_3}.
   Note, that in this section derivatives are to be understood in the sense of distributions.

   Let the domain $\Omega$ be discretised by a rectangular tensor-product mesh $T$
   with the nodes $(x_i,y_j)$,
   where $0=x_0<x_1<\ldots<x_N=1$ and $0=y_0<y_1<\ldots<y_M=1$ for $N,M\in\N$.
   On this mesh we derive the main ingredient of an a-posteriori error estimator,
   an $(L_\infty, W_{-1,\infty})$-stability result, following \cite[Theorem 4.1]{Kopt08}.
  \begin{thm}\label{thm:green:stability}
   Let $u$ be the unique solution of \eqref{eq:Lu'} for a given
   right-hand side $f$ satisfying
   \begin{gather}\label{eq:green:rhs_form}
      f(x,y)=\bar f(x,y)
             -\frac{\partial}{\partial x}[F_1(x,y)+\bar F_1(x,y)]
             -\frac{\partial}{\partial y}[F_2(x,y)+\bar F_2(x,y)]
   \end{gather}
   where
   \begin{align*}
      F_1(x,y)|_{(x_{i-1},x_i)}&=A_i(y)(x-x_{i-1/2}),&i=1,\dots,N\\
      F_2(x,y)|_{(y_{j-1},y_j)}&=B_j(x)(y-y_{j-1/2}),&j=1,\dots,M
   \end{align*}
   and $\bar f,\,\bar F_1,\,\bar F_2,\,A_i,$ and $B_j$ are arbitrary functions in $L_\infty(\Omega)$.

   Then it holds that
   \begin{align*}
     \norm{u}{L_\infty(\Omega)}
      \leq C\bigg[&\norm{\bar f}{L_\infty(\Omega)}+
                   (1+|\ln \eps|)\norm{\bar F_1}{L_\infty(\Omega)}+
                   \eps^{-1/2}\norm{\bar F_2}{L_\infty(\Omega)}+\\
                  &\max_{i=1,\dots,N}\left\{
                                      \min\left\{
                                            h_{i}^2\frac{\ln(2+\eps/\kappa_h)}{\eps},
                                            h_{i}(1+|\ln \eps|)
                                          \right\}
                                      \max_{y\in[0,1]}|A_i(y)|
                                     \right\}+\\&
                   \max_{j=1,\dots,M}\left\{
                                      \min\left\{
                                            k_{j}^2\frac{|\ln\eps|+\ln(2+\eps/\kappa_k)}{\eps},
                                            \frac{k_{j}}{\eps^{1/2}}
                                          \right\}
                                      \max_{x\in[0,1]}|B_j(x)|
                                     \right\}
            \bigg]
   \end{align*}
   with $\kappa_h=\min h_i,\,\kappa_k=\min k_j$.
   \end{thm}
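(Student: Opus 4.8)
The plan is to start from the Green's-function representation \eqref{eq:sol_prim}, $u(x,y)=\iint_\Omega G(x,y;\xi,\eta)f(\xi,\eta)\,d\xi\,d\eta$, substitute the decomposition \eqref{eq:green:rhs_form} of the right-hand side, and bound the five resulting groups of terms separately, each by the matching $L_1$-bound from Theorem~\ref{thm:green:main}. The pieces that already sit in $L_\infty$ are immediate: the $\bar f$-contribution is $\le\norm{\bar f}{L_\infty(\Omega)}\,\norm{G(x,y;\cdot)}{L_1(\Omega)}$, and $\norm{G(x,y;\cdot)}{L_1(\Omega)}\le C$ because $L_{xy}$ is uniformly $L_\infty$-stable --- test \eqref{eq:sol_prim} with the sign of $G(x,y;\cdot)$ and compare against the barrier $\Phi(x)=(2-x)/\beta$, for which $L_{xy}\Phi\ge b/\beta\ge1$. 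For the $\bar F_1$- and $\bar F_2$-contributions I would integrate by parts once, in $\xi$ respectively $\eta$; since $G(x,y;\cdot)$ vanishes on $\partial\Omega$ there is no boundary term, and H\"older's inequality with \eqref{eq:green:G_xi} gives exactly the factors $(1+|\ln\eps|)\norm{\bar F_1}{L_\infty(\Omega)}$ and $\eps^{-1/2}\norm{\bar F_2}{L_\infty(\Omega)}$.

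The real work is in the structured terms $\pt_x F_1$ and $\pt_y F_2$; I treat $F_1$, the other being symmetric. One integration by parts in $\xi$ gives $-\iint_\Omega G\,\pt_\xi F_1=\iint_\Omega \pt_\xi G\cdot F_1$: the jump contributions of the discontinuous $F_1$ at the interior nodes $x_i$ telescope against the interface terms of the piecewise integration by parts and cancel, again because $G$ vanishes at $\xi=0,1$. For the ``robust'' leg of the estimate I use $|F_1(\xi,\eta)|\le\tfrac12 h_i\max_y|A_i(y)|$ on $(x_{i-1},x_i)$, pull $\max_i\{h_i\max_y|A_i(y)|\}$ out of the sum over columns, observe that $\sum_i\norm{\pt_\xi G}{L_1((x_{i-1},x_i)\times(0,1))}=\norm{\pt_\xi G(x,y;\cdot)}{L_1(\Omega)}$, and invoke \eqref{eq:green:G_xi}; this yields the bound with factor $h_i(1+|\ln\eps|)$. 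For the second, ``fine-mesh'' leg I integrate by parts once more against the primitive $\widetilde F_1(\xi,\eta):=\int_0^\xi F_1(\zeta,\eta)\,d\zeta$: because $\int_{x_{i-1}}^{x_i}F_1(\cdot,\eta)\,d\xi=0$ (the mesh-midpoint symmetry built into $F_1$), $\widetilde F_1$ is continuous, vanishes at every node including the endpoints $\xi=0,1$, and obeys $|\widetilde F_1|\le\tfrac18 h_i^2\max_y|A_i(y)|$ on column $i$. Carrying out the second integration by parts over $\Omega\setminus B(x,y;\kappa_h)$ turns the term into $\iint\pt_\xi^2 G\cdot\widetilde F_1$, which \eqref{eq:green:G_xixi} with $\rho=\kappa_h$ bounds by the factor $h_i^2\ln(2+\eps/\kappa_h)/\eps$; over the small ball $B(x,y;\kappa_h)$ I revert to the single--integration-by-parts form and use $\norm{\pt_\xi G(x,y;\cdot)}{L_1(B(x,y;\kappa_h))}\le C\min\{\eps^{-1}\kappa_h,\,1+|\ln\eps|\}$ from \eqref{eq:green:G_grad} and \eqref{eq:green:G_xi}, which --- since every column meeting that ball has width $h_i\ge\kappa_h$ --- is dominated by the minimum of the two leg bounds. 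Collecting and taking the worst column gives the $A_i$-part of the estimate; the $\pt_y F_2$-part is obtained the same way, now with the $\pt_\eta$ bound in \eqref{eq:green:G_xi}, with \eqref{eq:green:G_grad}, and with \eqref{eq:green:G_etaeta}, which is why the corresponding $\eps$-weights are $\eps^{-1/2}$ and $|\ln\eps|+\ln(2+\eps/\kappa_k)$.

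The main obstacle is the interior singularity of $G$ at $(\xi,\eta)=(x,y)$: $\pt_\xi^2 G$ and $\pt_\eta^2 G$ fail to be integrable on any neighbourhood of that point, so the twice--integrated-by-parts estimate is unavailable near $(x,y)$ and must be replaced there by the weaker once--integrated-by-parts estimate --- this is exactly what forces the minimum in the statement. The accompanying delicate bookkeeping is: to split $\Omega$ at the correct scale ($\kappa_h$, respectively $\kappa_k$); to control the interface terms that the second integration by parts leaves on $\pt B(x,y;\kappa_h)$, most cleanly by choosing the ball radius inside an annulus $[\kappa_h,2\kappa_h]$ on which the $W_1^1$-ball bound \eqref{eq:green:G_grad} forces a boundary line integral of $\pt_\xi G$ of size $\ord{\eps^{-1}}$; and to verify column by column that each contribution is bounded by the \emph{minimum} of the two leg estimates, not merely by the larger of two global bounds. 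All of this follows the pattern of \cite[Theorem 4.1]{Kopt08}.
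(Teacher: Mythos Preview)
Your approach is correct and matches the paper's: same five-way split by linearity, same single integration by parts for $\bar F_1,\bar F_2$, and the same two-leg estimate for the structured pieces exploiting the mean-zero property of $(\xi-x_{i-1/2})$ on each mesh interval, with a small neighbourhood of the singularity handled separately via~\eqref{eq:green:G_grad}. The paper's bookkeeping differs only in that it excises a mesh-aligned \emph{rectangle} $\Omega'=(x_{n-1},x_{n+1})\times(y-\tilde h_n,y+\tilde h_n)$ rather than a ball (which sidesteps your curved-interface/annulus step entirely), writes $\tilde g_\xi(\xi)=\tilde g_\xi(x_{i-1})+\int_{x_{i-1}}^\xi\tilde g_{\xi\xi}\,ds$ instead of taking the primitive $\widetilde F_1$ (the two manipulations are dual), and realises the column-by-column minimum via an explicit splitting $A_i=A_i^1+A_i^2$ according to which leg is smaller.
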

   \begin{rem}
    The existence of $u\in L_\infty(\Omega)$ for a given right-hand side $f$ of the form
    \eqref{eq:green:rhs_form} follows from the classical results~\cite[Chap. 3 Theorems 5.2, 13.1]{Lady68}.
   \end{rem}
   \begin{proof}[Proof of Theorem~\ref{thm:green:stability}]
     Using the linearity of the operator $L$, we split $f$ into different parts
     and analyse them separately. For simplicity of the representation, denote by $g(\xi,\eta)=G(x,y;\xi,\eta)$.

     \textbf{1)} Let $\mathbf{\bar F_1=\bar F_2=F_1=F_2=0}$, i.e. $f=\bar f$.\\
      The maximum principle (or \eqref{eq:sol_prim} and $\norm{g}{L_1(\Omega)}\leq C$) implies
      \[
        \norm{u}{L_\infty(\Omega)}\leq C \norm{\bar f}{L_\infty(\Omega)}.
      \]

     \textbf{2)} Let $\mathbf{\bar f=F_1=F_2=0}$, i.e. $f=-\frac{\partial}{\partial x}\bar F_1
                                                          -\frac{\partial}{\partial y}\bar F_2$.\\
     We represent $u$ using \eqref{eq:sol_prim}.
     Integration by parts and a Cauchy-Schwarz inequality give
     \begin{align*}
       u(x,y)&=\iint_\Omega g(\xi,\eta)f(\xi,\eta) d\xi d\eta\\
             &=\iint_\Omega g_\xi(\xi,\eta)\bar F_1(\xi,\eta) d\xi d\eta+
               \iint_\Omega g_\eta(\xi,\eta)\bar F_2(\xi,\eta) d\xi d\eta\\
             &\leq \norm{G_\xi}{L_1(\Omega)}\norm{\bar F_1}{L_\infty(\Omega)}+
                   \norm{G_\eta}{L_1(\Omega)}\norm{\bar F_2}{L_\infty(\Omega)}.
     \end{align*}
     With \eqref{eq:green:G_xi} we obtain
      \[
        \norm{u}{L_\infty(\Omega)}\leq C \left[(1+|\ln\eps|)\norm{\bar F_1}{L_\infty(\Omega)}+
                                               \eps^{-1/2}\norm{\bar F_2}{L_\infty(\Omega)}\right].
      \]

     \textbf{3)} Let $\mathbf{\bar f=\bar F_1=\bar F_2=F_2=0}$, $f=-\frac{\partial}{\partial x}F_1$.\\
     Using \eqref{eq:sol_prim} and integration by parts again, we have
     \begin{align*}
       u(x,y)
        &=\iint_\Omega F_1(\xi,\eta) g_\xi(\xi,\eta) d\xi d\eta
         =\sum_{i=1}^N\iint_{\Omega_i} A_i(\eta)(\xi-\xi_{i-1/2})g_\xi(\xi,\eta)d\xi d\eta
     \end{align*}
     where $\Omega_i=(x_{i-1},x_i)\times[0,1]$.
     The Green's function $g$ has a singularity at $(x,y)$.
     Define $0<n<N$ where $x\in[x_{n-1/2},x_{n+1/2}]$
     and $\Omega'=(x_{n-1},x_{n+1})\times(y-\tilde h_n,y+\tilde h_n)$ where
     $\tilde h_n=\min\{h_n,h_{n+1}\}/2$. Note that the singularity now lies in $\Omega'$.

     Defining the singularity-free function $\tilde g$ by $\tilde g=g$ in $\Omega\setminus\Omega'$
     and $\tilde g=0$ in $\Omega'$ we obtain
     \begin{align*}
       u(x,y)
        &=
         \sum_{i=1}^N\iint_{\Omega_i}\!\!\!\! A_i(\eta)(\xi-\xi_{i-1/2})\tilde g_\xi(\xi,\eta)d\xi d\eta+
         \sum_{i=n}^{n+1}\iint_{\Omega_i\cap\Omega'}\!\!\!\!\! A_i(\eta)(\xi-\xi_{i-1/2})g_\xi(\xi,\eta)d\xi d\eta\\
        &=:S_1+S_2.
     \end{align*}
     The term $S_1$ can be estimated in two different ways. Either by
     \begin{align*}
       \left|\int_{x_{i-1}}^{x_i}\!\!(\xi-x_{i-1/2})\tilde g_\xi(\xi,\eta)d\xi\right|
        &\leq \frac{h_i}{2}\int_{x_{i-1}}^{x_i}|\tilde g_\xi(\xi,\eta)|d\xi
     \intertext{or by}
       \left|\int_{x_{i-1}}^{x_i}\!\!(\xi-x_{i-1/2})\tilde g_\xi(\xi,\eta)d\xi\right|
        &= \left|\int_{x_{i-1}}^{x_i}\!\!(\xi-x_{i-1/2})
                 \int_{x_{i-1}}^\xi\!\!\tilde g_{\xi\xi}(s,\eta)ds d\xi\right|
        \leq \frac{h_i^2}{4}\int_{x_{i-1}}^{x_i}\!\!|\tilde g_{\xi\xi}(\xi,\eta)|d\xi.
     \end{align*}
     Note that $\tilde g_{\xi\xi}$ is well defined.
     In order to use these two possibilities,
     decompose $A_i=A_i^1+A_i^2$ where
     \[
       A_i^1=\begin{cases}
               A_i, & 2h_i\eps(1+|\ln\eps|)\leq h_i^2\ln(2+\eps/\kappa_h)\\
               0,   &\mbox{otherwise}
             \end{cases}
       \quad\mbox{and }
       A_i^2=A_i-A_i^1.
     \]
     This yields by using Theorem~\ref{thm:green:main}
     \begin{align*}
       |S_1|
        &\leq \sum_{i=1}^N
                \int_0^1 |A_i(\eta)|
                         \left|
                           \int_{x_{i-1}}^{x_i}(\xi-\xi_{i-1/2})\tilde g_\xi(\xi,\eta)d\xi
                         \right| d\eta\\
        &\leq  \max_{i=1,\dots,N}\left\{\frac{h_i}{2}\max_{\eta\in[0,1]}|A_i^1(\eta)|\right\}
                 \iint_{\Omega\setminus\Omega'} |G_\xi(\xi,\eta)|d\xi d\eta+\\&\hspace{0.5cm}
               \max_{i=1,\dots,N}\left\{\frac{h_i^2}{4}\max_{\eta\in[0,1]}|A_i^2(\eta)|\right\}
                 \iint_{\Omega\setminus\Omega'} |G_{\xi\xi}(\xi,\eta)|d\xi d\eta
               \\
        &\leq C\max_{i=1,\dots,N}\left\{
                                  \min\left\{
                                        h_i(1+|\ln\eps|),\frac{h_i^2}{\eps}\ln(2+\eps/\kappa_h)
                                      \right\}\max_{\eta\in[0,1]}|A_i(\eta)|
                                 \right\}.
     \end{align*}
     For $S_2$ we use either \eqref{eq:green:G_xi}
     \[
       \norm{G_\xi}{L_1(\Omega)}\leq C(1+|\ln\eps|)
     \]
     or \eqref{eq:green:G_grad}
     \[
       \norm{G_\xi}{L_1(B(a,b,\rho))}\leq C\eps^{-1}\rho.
     \]
     Let $A_i=\tilde A_i^1+\tilde A_i^2$ with
     \[
       \tilde A_i^1=\begin{cases}
               A_i, & h_i\eps(1+|\ln\eps|)\leq h_i^2\\
               0,   &\mbox{otherwise}
             \end{cases}
       \quad\mbox{and }
       \tilde A_i^2=A_i-\tilde A_i^1.
     \]
     Then holds
     \begin{align*}
       |S_2|
        &\leq \sum_{i=n}^{n+1}\iint_{\Omega_i\cap\Omega'} |A_i(\eta)||(\xi-\xi_{i-1/2})||g_\xi(\xi,\eta)|d\xi d\eta\\
        &\leq \sum_{i=n}^{n+1}\frac{h_i}{2}\max_{\eta\in[0,1]}|A_i(\eta)|
                \iint_{B(x_{i-1/2},y,h_i)} |g_\xi(\xi,\eta)|d\xi d\eta\\
        &\leq C\left(\max_{i=n,n+1}\left\{
                                 \frac{h_i}{2}\max_{\eta\in[0,1]}|\tilde A_i^1(\eta)|
                            \right\}|\ln\eps|+
              \max_{i=n,n+1}\left\{
                                 \frac{h_i^2}{2\eps}\max_{\eta\in[0,1]}|\tilde A_i^2(\eta)|
                            \right\}\right)\\
        &\leq C\max_{i=n,n+1}\left\{
                             \min\left\{
                                   h_i(1+|\ln\eps|),\frac{h_i^2}{\eps}
                                 \right\}\max_{\eta\in[0,1]}|A_i(\eta)|
                            \right\}.
     \end{align*}
     Thus we obtain
     \[
       \norm{u}{L_\infty(\Omega)}
        \leq C\max_{i=1,\dots,N}\left\{
                                  \min\left\{
                                        h_i(1+|\ln\eps|),\frac{h_i^2}{\eps}\ln(2+\eps/\kappa_h)
                                      \right\}\max_{\eta\in[0,1]}|A_i(\eta)|
                                 \right\}.
     \]

     \textbf{4)} Let $\mathbf{\bar f=\bar F_1=F_1=\bar F_2=0}$, i.e. $f=-\frac{\partial}{\partial y}F_2$.\\
     This case can be treated similarly to the one above.
     Using a similar splitting of $u=\widetilde S_1+\widetilde S_2$ gives
     \begin{align*}
       |\widetilde S_1|
        &\leq C\max_{j=1,\dots,M}\left\{
                                  \min\left\{
                                        \frac{k_j}{\eps^{1/2}},\frac{k_j^2}{\eps}(|\ln\eps|+\ln(2+\eps/\kappa_k))
                                      \right\}\max_{\xi\in[0,1]}|B_j(\xi)|
                                 \right\}
     \intertext{and}
       |\widetilde S_2|
        &\leq C\max_{j=m,m+1}\left\{
                             \min\left\{
                                   \frac{k_j}{\eps^{1/2}},\frac{k_j^2}{\eps}
                                 \right\}\max_{\xi\in[0,1]}|B_j(\xi)|
                            \right\}
     \end{align*}
     and therefore
     \[
       \norm{u}{L_\infty(\Omega)}
        \leq C\max_{j=1,\dots,M}\left\{
                                  \min\left\{
                                        \frac{k_j}{\eps^{1/2}},\frac{k_j^2}{\eps}(|\ln\eps|+\ln(2+\eps/\kappa_k))
                                      \right\}\max_{\xi\in[0,1]}|B_j(\xi)|
                                 \right\}.
     \]
     By combining these estimates the stability result is proved.
   \end{proof}

   Note that in above results the global minima $\kappa_h$ and $\kappa_k$ can be replaced by
   local minima over two adjacent cells each.
   
\subsection*{Application to an Upwind Method}
   So far our Green's function estimates have not been applied to finite element methods.
   The Green's function $G$ is in general not in $H_0^1(\Omega)$ which complicates the derivation
   of uniform a-posteriori error estimators via above approach.
   Further research is needed to apply this approach to finite element methods.
   
   Instead, we will apply the stability result to an upwind finite difference method.
   Let us start by rewriting \eqref{eq:Lu'} as
   \begin{subequations}\label{eq:Lu_split}
   \begin{align}
     Lu&=-(A_1u)_x-(A_2u)_y-(Bu)_x+Cu=f\\
   \intertext{where}
     A_1u &=\eps u_x,\quad
     A_2u =\eps u_y,\quad
     Bu   =bu\quad\mbox{and}\quad
     Cu   =cu.\label{eq:L_split:2}
   \end{align}
   \end{subequations}

   Using the index sets $I=\{1,\dots,N-1\}$, $\bar I=\{0,\dots,N\}$, $J=\{1,\dots,M-1\}$
   and $\bar J=\{0,\dots,M\}$, we define our discrete counterpart to \eqref{eq:Lu_split}:
   \begin{subequations}\label{eq:Lu_discr}
   \begin{align}
      L^N\mathbf{u}_{ij}
          =&-\widetilde D_x A_1^N\mathbf{u}_{ij}
            -\widetilde D_y A_2^N\mathbf{u}_{ij}
            -\widetilde D_x B^N\mathbf{u}_{ij}
            +C^N\mathbf{u}_{ij}\notag\\
          =&-\eps (D_x^2\mathbf{u}_{ij}+D_y^2\mathbf{u}_{ij})
           -\widetilde D_x (\mathbf{b}_{ij}\mathbf{u}_{ij})
           +\mathbf{c}_{ij}\mathbf{u}_{ij}
          \quad=
          \mathbf{f}_{ij},
      &&i\in I,j\in J\\
      \mathbf{u}_{i,0}=&\,\mathbf{u}_{i,M}=0,&&i\in\bar I\\
      \mathbf{u}_{0,j}=&\,\mathbf{u}_{N,j}=0,&&j\in\bar J.
   \end{align}
   \end{subequations}
   where
   \begin{align*}
     \mathbf{f}_{ij}&=f(x_i,y_j)\\
     A_1^N\mathbf{u}_{ij}&= \eps D^-_x\mathbf{u}_{ij},\quad
     A_2^N\mathbf{u}_{ij} = \eps D^-_y\mathbf{u}_{ij},\quad
     B^N\mathbf{u}_{ij}   = \mathbf{b}_{ij}\mathbf{u}_{ij}\quad\mbox{and}\quad
     C^N\mathbf{u}_{ij}   = \mathbf{c}_{ij}\mathbf{u}_{ij}
   \end{align*}
   with the standard backward difference operators $D^-$.
   With $\hbar_i=(h_i+h_{i+1})/2$ 
   the other discrete operators are defined as
   \begin{align*}
     \widetilde D_x\mathbf{u}_{ij}
        &=\frac{\mathbf{u}_{i+1,j}-\mathbf{u}_{ij}}{\hbar_i},&
     D_x^2\mathbf{u}_{ij}
        &=\frac{1}{\hbar_i}\left[ \frac{\mathbf{u}_{i+1,j}-\mathbf{u}_{ij}}
                                           {h_{i+1}}
                                     -\frac{\mathbf{u}_{i,j}-\mathbf{u}_{i-1,j}}
                                           {h_{i}}
                           \right],
        &i\in I,\,
         j\in J,
   \end{align*}
   and similarly in $y$-direction.

   Note that \eqref{eq:Lu_discr} is a non-standard upwind finite difference method.
   The difference to the standard upwind FDM is the treatment of the convective term
   by $\widetilde D_x$ instead of $D^+_x$. The reason for this different treatment
   lies in the following analysis.

   Let us use the continuous residual, i.e. \mbox{$L(\mathbf{u}^\B-u)$}.
   Here $\mathbf{u}^\B$ denotes the piecewise bilinear interpolant
   of the discrete variable $\mathbf{u}$. With $\mathbf{u}^\I$ and $\mathbf{u}^\J$
   as the one-dimensional piecewise linear interpolations in $x$- and $y$-direction, respectively
   we have
   \[
      \mathbf{u}^\B=(\mathbf{u}^\I)^\J=(\mathbf{u}^\J)^\I.
   \]
   With a proper extension of our discrete operators to the boundary of $\Omega$,
   it holds for the residual
   \begin{align}
     L(\mathbf{u}^\B-u)
      &= -\left[(A_1\mathbf{u}^\I)_x+\mathbf{F_1}^\I\right]^\J
         -\left[(A_2\mathbf{u}^\J)_y+\mathbf{F_2}^\J\right]^\I\notag\\&\quad
         -\left[(B\mathbf{u}^\I)_x+\mathbf{F_3}^\I\right]^\J
         +\left[C\mathbf{u}^\B-\mathbf{F_4}^\B\right]-f+\mathbf{f}^\B\label{eq:residual}
   \end{align}
   where
   \begin{align*}
     \mathbf{F_{1}}_{ij}&:=-\widetilde D_x A_1^N\mathbf{u}_{ij},&
     \mathbf{F_{2}}_{ij}&:=-\widetilde D_y A_2^N\mathbf{u}_{ij},\\
     \mathbf{F_{3}}_{ij}&:=-\widetilde D_x B^N\mathbf{u}_{ij},&
     \mathbf{F_{4}}_{ij}&:=C^N\mathbf{u}_{ij}.
   \end{align*}

   Let us start with the first term on the right-hand side of \eqref{eq:residual}
   for $x\in[x_{i-1},x_i]$ and fixed $y=y_j$.
   With the auxiliary terms 
   \begin{align*}
    Q^1  &:= \int_x^1 \mathbf{F_{1}}^\I,&
    Q^1_i&:= \sum_{k=i}^{N-1}\mathbf{F_{1}}_{kj}\hbar_k
   \end{align*}
   we obtain 
   \[
     Q^1_i
      =A_1^N\mathbf{u}_{ij}-A_1^N\mathbf{u}_{Nj}
      =A_1^N\mathbf{u}_{ij}
   \]
   and therefore
  \[
     (A_1\mathbf{u}^\I)_x+\mathbf{F_1}^\I
       = \pt_x(A_1\mathbf{u}^\I-Q^1)
       = \pt_x(A_1\mathbf{u}^\I-A_1^N\mathbf{u}_{ij})+\pt_x(Q^1_i-Q^1)
       = \pt_x(Q^1_i-Q^1).
   \]
   Now using the summation in $Q^1_i$ we can estimate further
   \begin{align*}
     \pt_x(Q^1_i-Q^1)
      &= \pt_x\left( \int_{x_i}^1 \mathbf{F_1}^\I
                    -\int_{x}^1\mathbf{F_1}^\I
                    +\mathbf{F_1}_{ij}\frac{h_i}{2}
                    -\mathbf{F_1}_{Nj}\frac{h_N}{2}\right)\notag\\
      &= \pt_x\left(-\int_x^{x_i} \mathbf{F_1}^\I
                    +\mathbf{F_1}_{ij}\frac{h_i}{2}\right)\notag\\
      &= \pt_x\left( \mathbf{F_1}_{ij}\frac{(x-x_{i-1})^2}{2h_i}
                    -\mathbf{F_1}_{i-1,j}\frac{(x_i-x)^2}{2h_i}\right).
   \end{align*}
   Thus we obtain
   \begin{gather}\label{eq:Q1_diff}
    -\left[(A_1\mathbf{u}^\I)_x+\mathbf{F_1}^\I\right]^\J\bigg|_{x \in[x_{i-1},x_i]}
     = -\pt_x\left( \mathbf{F_1}_{ij}\frac{(x-x_{i-1})^2}{2h_i}
                    -\mathbf{F_1}_{i-1,j}\frac{(x_i-x)^2}{2h_i}\right)^\J.
   \end{gather}

   With similar techniques the other terms of \eqref{eq:residual} can be rewritten.
   Note that \eqref{eq:Q1_diff} can be further transformed to yield second-order terms
   in $h_i$ but then we obtain discrete third-order derivatives.
   We apply this for the $y$-derivatives. Now using the stability result
   of Theorem \ref{thm:green:stability} to the right-hand side of the
   continuous residual~\eqref{eq:residual} yields an \emph{a-posteriori} error estimator.
   
   \begin{thm}\label{thm:green:estimator}
    Let $\mathbf{u}$ be the solution of \eqref{eq:Lu_discr} and $u$ be the solution of \eqref{eq:Lu'}.
    Then holds
   \begin{align*}
     \norm{\mathbf{u}^\B-u}{L_\infty(\Omega)}
      &\leq C\bigg(
             \max_{\stackrel{i=0,\dots,N}{j=1,\dots,M}}M^1_{ij}+
             \max_{\stackrel{i=0,\dots,N}{j=1,\dots,M}}M^2_{ij}+
             \max_{\stackrel{i=0,\dots,N}{j=1,\dots,M}}M^3_{ij}+\\&\hspace{1.2cm}
             \max_{\stackrel{i=1,\dots,N}{j=0,\dots,M}}M^4_{ij}+
             \max_{\stackrel{i=1,\dots,N}{j=0,\dots,M}}M^5_{ij}+
             \max_{\stackrel{i=1,\dots,N}{j=0,\dots,M}}M^6_{ij}+
             \max_{\stackrel{i=1,\dots,N}{j=0,\dots,M}}M^7_{ij}
            \bigg)
      \intertext{with the terms depending on discrete $y$-derivatives}
      M_{ij}^1 &:= \min\{\eps^{1/2} k_j,k_j^2(|\ln\eps|+\ln(2+\eps/\kappa_k))\}
                  \min\{|D_y^2\mathbf{u}_{i,j-1}|,|D_y^2\mathbf{u}_{ij}|\},\\
      M_{ij}^2 &:= \eps^{1/2} k_j^2
                   |D_y^-D_y^2\mathbf{u}_{i,j}|,\\
      M_{ij}^3 &:= k_j^2
                   (1+|D_y^-\mathbf{u}_{ij}|^2),
      \intertext{and terms depending on discrete $x$-derivatives}
      M_{ij}^4 &:= \eps h_i(1+|\ln\eps|)\max\{|[D_{x}^2\mathbf{u}]_{i-1,j}|,|[D_{x}^2\mathbf{u}]_{ij}|\},\\
      M_{ij}^5 &:= h_{i}^2(1+|[D_{x}^-\mathbf{u}]_{ij}|^2),\\
      M_{ij}^6 &:= h_{i}(1+|\ln\eps|)\max\{|[\widetilde D_{x}\mathbf{u}]_{i-1,j}|,|[\widetilde D_{x}\mathbf{u}]_{ij}|\},\\
      M_{ij}^7 &:= h_{i}(1+|\ln\eps|)(1+|[D_{x}^-\mathbf{u}]_{ij}|).
   \end{align*}
   \end{thm}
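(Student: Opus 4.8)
The plan is to apply the $(L_\infty,W_{-1,\infty})$-stability estimate of Theorem~\ref{thm:green:stability} to the function $\mathbf{u}^\B-u$, which is the unique solution of \eqref{eq:Lu'} with right-hand side $L(\mathbf{u}^\B-u)=L\mathbf{u}^\B-f$ and homogeneous boundary data (both $u$ and the piecewise-bilinear $\mathbf{u}^\B$ vanish on $\partial\Omega$). Since $f\in L_\infty(\Omega)$ and every difference quotient of $\mathbf{u}$ is bounded on the fixed mesh, the whole task reduces to casting the right-hand side of the continuous residual identity \eqref{eq:residual} into the structural form \eqref{eq:green:rhs_form}, i.e. as $\bar f-\partial_x[F_1+\bar F_1]-\partial_y[F_2+\bar F_2]$ with $F_1,F_2$ of the prescribed piecewise-affine shape and $\bar f,\bar F_1,\bar F_2\in L_\infty(\Omega)$, and then reading off $A_i$, $B_j$, $\bar f$, $\bar F_1$, $\bar F_2$ as explicit difference expressions in $\mathbf{u}$ and the data.

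I would treat the right-hand side of \eqref{eq:residual} term by term. For the $(A_1\mathbf{u}^\I)_x$-group I would use exactly the manipulation behind \eqref{eq:Q1_diff} together with the identity $\mathbf{F_1}_{ij}=-\eps D_x^2\mathbf{u}_{ij}$ (immediate from the definitions of $A_1^N$ and $\widetilde D_x$): it equals $-\partial_x$ of a function that on each cell is the quadratic $\mathbf{F_1}_{ij}\tfrac{(x-x_{i-1})^2}{2h_i}-\mathbf{F_1}_{i-1,j}\tfrac{(x_i-x)^2}{2h_i}$, post-composed with the $y$-interpolation $(\cdot)^\J$; putting this quadratic entirely into $\bar F_1$, whose local sup-norm is $\le\tfrac{h_i}{2}\max\{|\mathbf{F_1}_{i-1,j}|,|\mathbf{F_1}_{ij}|\}$, the $(1+|\ln\eps|)$-weight of Theorem~\ref{thm:green:stability} delivers $M^4$. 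For the analogous $y$-group from $A_2\mathbf{u}^\J$ I would push the rewriting one step further, as flagged before the theorem: split the resulting cell-quadratic in $y$ into its affine interpolant --- precisely of the form $B_j(x)(y-y_{j-1/2})$ with slope proportional to $\eps D_y^2\mathbf{u}$, so that via the $\min\{k_j^2(|\ln\eps|+\ln(2+\eps/\kappa_k))/\eps,\,k_j/\eps^{1/2}\}$-weight it contributes $M^1$ --- plus the remaining genuinely second-order part, whose curvature is the discrete third difference $\eps D_y^-D_y^2\mathbf{u}$ and which, placed in $\bar F_2$ with its $\eps^{-1/2}$-weight, gives $M^2$. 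The convective group $-(B\mathbf{u}^\I)_x$ is handled the same way, the one new feature being that $b\mathbf{u}^\I$ is not reproduced exactly by $B^N\mathbf{u}$ on a cell: besides the $\mathbf{F_3}=-\widetilde D_x(b\mathbf{u})$ part (giving $M^6$) one picks up an extra $\partial_x$-divergence of an $O(h_i)$ function built from $D_x^-\mathbf{u}$ and the nodal values $\mathbf{u}_{ij}$, which yields $M^7$. Finally the zeroth-order remainders $C\mathbf{u}^\B-\mathbf{F_4}^\B=c\mathbf{u}^\B-(c\mathbf{u})^\B$ and $\mathbf{f}^\B-f$ are nodal-interpolation errors; re-expressing them without ever differentiating $f$ or $u$ --- as $\bar f$-type and $y$-divergence pieces of size $O(h_i^2+k_j^2)$ built from $D_x^-\mathbf{u}$, $D_y^-\mathbf{u}$ and the data --- produces $M^3$ and $M^5$. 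Collecting all contributions and invoking Theorem~\ref{thm:green:stability}, with $C$ absorbing $\norm{b}{\infty},\norm{b_x}{\infty},\norm{c}{\infty},\norm{f}{\infty}$, gives the assertion.

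The only real difficulty is the bookkeeping in the second step: for each piece of the residual one must decide whether it is cheaper to route it through an $F$-slot (weight $\min\{\text{mesh}^2(\cdot)/\eps,\ \text{mesh}\cdot(\cdot)\}$) or through a $\bar F$- or $\bar f$-slot, keep the two-regime splitting of each $A_i$ and $B_j$ consistent across all its occurrences, and --- the point that keeps the estimator genuinely computable --- rewrite the data-dependent remainders $c\mathbf{u}^\B-(c\mathbf{u})^\B$ and $\mathbf{f}^\B-f$ so that no derivative of $f$ or $u$ survives, only mesh widths, difference quotients of $\mathbf{u}$, and bounded data. The precise combinatorial form of the resulting $M^1,\dots,M^7$ --- in particular the quadratic discrete-derivative contributions $|D_x^-\mathbf{u}_{ij}|^2$, $|D_y^-\mathbf{u}_{ij}|^2$ in $M^5$ and $M^3$ --- emerges only after this rearrangement, and carrying it out exactly is the tedious heart of the proof; everything else is routine manipulation of difference operators.
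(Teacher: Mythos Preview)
Your proposal is correct and follows essentially the same approach as the paper: the paper's proof is precisely the sketch given in the text preceding the theorem --- rewrite the residual \eqref{eq:residual} term by term into the structural form \eqref{eq:green:rhs_form}, carry out the manipulation \eqref{eq:Q1_diff} for the $A_1$-group, push it one step further for the $A_2$-group (producing the third-difference term $M^2$), treat the remaining terms by ``similar techniques'', and invoke Theorem~\ref{thm:green:stability}. Your outline of which residual piece feeds which $M^k$ is in fact more detailed than what the paper spells out; the paper leaves the precise routing (including the origin of the squared first differences in $M^3$ and $M^5$) to the reader and to the forthcoming paper~\cite{FrK09_3}.
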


   Note that formally, $M^1$ to $M^3$ are of order $k_j^2$ while $M^4$, $M^6$ and $M^7$ are of order $h_i$.
   Only $M^5$ is of order $h_i^2$ and therefore probably negligible.
   Thus, the estimator is formally of first order (if $k_j^2\leq h_i$) which
   is consistent with the formal order of an upwind method.

   The constant $C$ in the error bound of Theorem~\ref{thm:green:estimator} is unknown.
   By setting it to $C=1$ we obtain an \emph{error indicator} that gives us
   information about the convergence behaviour, though not about the exact value of
   the error. For singularly perturbed problems the uniformity of the indicator
   is usually more important than the precise value of $C$.

   \begin{rem}\label{rem:green:estimator}
     The part $M^2$ contains discrete third-order derivatives. They are costly to evaluate
     and therefore an estimator with only second-order derivatives would be beneficial.
     In \cite[§6]{Kopt07} an idea is used, that bounds the third-order derivative by a
     second-order derivative term. This approach could be used here too. We will tackle it in
     the forthcoming paper \cite{FrK09_3}.
   \end{rem}

   \subsection*{A Numerical Example}
   Let us consider the numerical example
  \begin{subequations}\label{eq:num_example2}
  \begin{alignat}{2}
    -\eps\laplace u - u_x + \frac{1}{2} u & = f
    &\quad&\text{in }\Omega=(0,1)^2,\\
    u & = 0 && \text{on }\partial\Omega,
  \end{alignat}
  where the right-hand side $f$ is chosen such that
  \begin{gather}
    u(x,y) = \left(\cos\frac{\pi x}{2} - \frac{ \e^{-x/\eps} - \e^{-1/\eps}}%
    {1-\e^{-1/\eps}}
    \right)
    \frac{\left(1-\e^{-y/\sqrt{\eps}}\right)
    \left( 1-\e^{-(1-y)/\sqrt{\eps}} \right)}{1-\e^{-1/\sqrt{\eps}}}
  \end{gather}
  \end{subequations}
  is the solution.

  \subsubsection*{Dependence on $\eps$}
  In our first experiment we look into the uniformity w.r.t. $\eps$ of the indicator given
  in Theorem~\ref{thm:green:estimator} with $C=1$.
  For given values of $\eps$ we compute the numerical
  solution on an a-priori chosen Shishkin mesh for $N=64$ and compare the
  results in Figure~\ref{fig:estim:eps}.
  \begin{figure}[tb]
      \centerline{\includegraphics[width=0.8\textwidth]{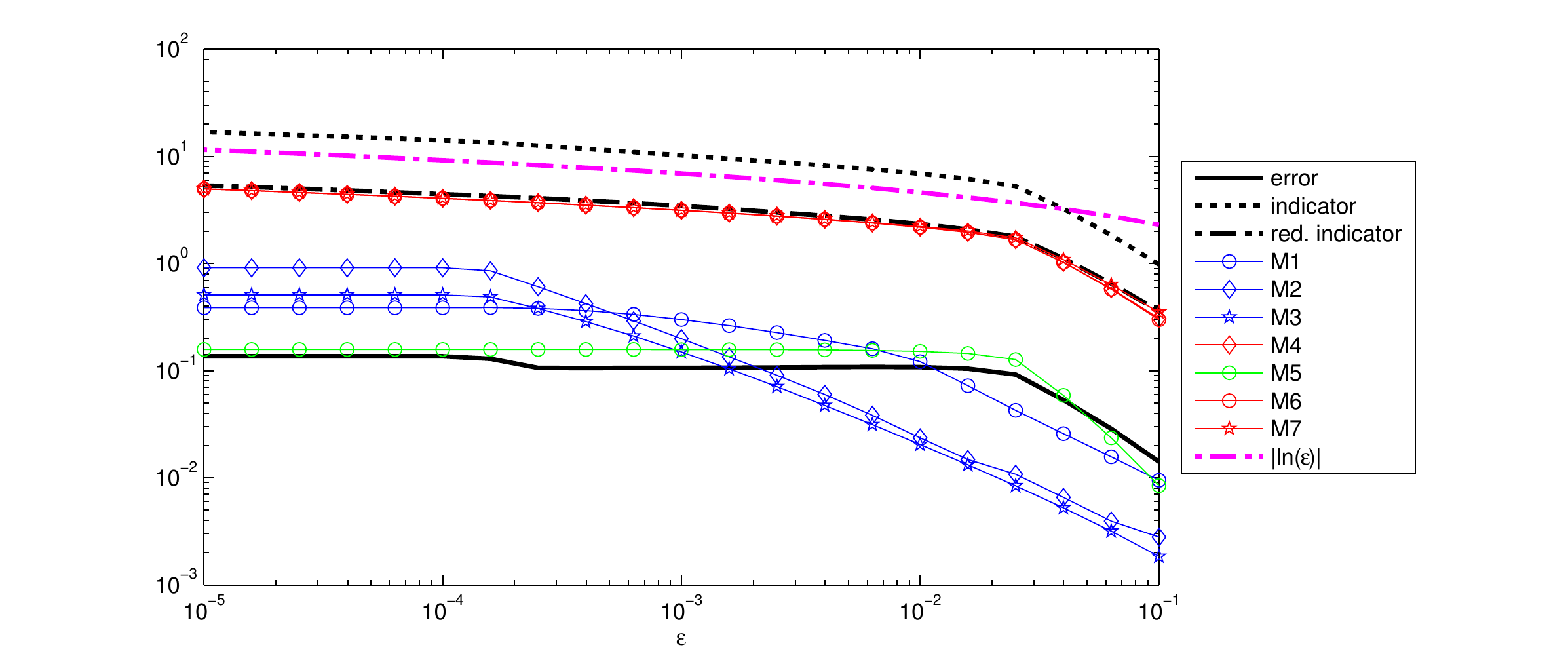}}
      \caption{Error and estimated error of \eqref{eq:num_example2} for $N=64$ on a Shishkin mesh}
             \label{fig:estim:eps}
  \end{figure}
  Therein for each component of the indicator a line is shown. Additionally,
  a solid black line represents the real error, and a black dash-dot line
  represents a modified indicator.
  The modified indicator takes only the maxima of $M^1$, $M^3$, $M^4$ and $M^7$.
  In numerical simulations this modification represents the behaviour of the
  error much better than the real indicator. For another motivation, see also
  Remark~\ref{rem:green:estimator}.

  In Figure~\ref{fig:estim:eps} both indicators behave like $|\ln\eps|$, which is
  also given for comparison as a line in magenta. But the real error stays almost 
  constant for $\eps$ becoming smaller. 
  Thus, there is a $|\ln\eps|$-dependence in our estimators coming
  from the Green's function estimates, although they are sharp.
  This behaviour was seen for several different examples.

  As a consequence we will use from now on the heuristic indicator 
  \begin{align*}
     \eta
      &:=\bigg(
             \max_{\stackrel{i=0,\dots,N}{j=1,\dots,M}}M^1_{ij}+
             \max_{\stackrel{i=0,\dots,N}{j=1,\dots,M}}M^2_{ij}+
             \max_{\stackrel{i=0,\dots,N}{j=1,\dots,M}}M^3_{ij}+\\&\hspace{1.2cm}
             \max_{\stackrel{i=1,\dots,N}{j=0,\dots,M}}M^4_{ij}+
             \max_{\stackrel{i=1,\dots,N}{j=0,\dots,M}}M^5_{ij}+
             \max_{\stackrel{i=1,\dots,N}{j=0,\dots,M}}M^6_{ij}+
             \max_{\stackrel{i=1,\dots,N}{j=0,\dots,M}}M^7_{ij}
            \bigg)
     \intertext{and the modified indicator}
     \widetilde\eta
      &:=\bigg(
             \max_{\stackrel{i=0,\dots,N}{j=1,\dots,M}}M^1_{ij}+
             \max_{\stackrel{i=0,\dots,N}{j=1,\dots,M}}M^3_{ij}+
             \max_{\stackrel{i=1,\dots,N}{j=0,\dots,M}}M^4_{ij}+
             \max_{\stackrel{i=1,\dots,N}{j=0,\dots,M}}M^7_{ij}
            \bigg)
      \intertext{where}
      M_{ij}^1 &:= \min\{\eps^{1/2} k_j,k_j^2\ln(2+\eps/\kappa_k)\}
                  \min\{|D_y^2\mathbf{u}_{i,j-1}|,|D_y^2\mathbf{u}_{ij}|\},\\
      M_{ij}^2 &:= \eps^{1/2} k_j^2
                  |D_y^-D_y^2\mathbf{u}_{i,j}|,\qquad\qquad
      M_{ij}^3  := k_j^2
                  (1+|D_y^-\mathbf{u}_{ij}|^2),\\
      M_{ij}^4 &:= \eps h_i\max\{|[D_{x}^2\mathbf{u}]_{i-1,j}|,|[D_{x}^2\mathbf{u}]_{ij}|\},\\
      M_{ij}^5 &:= h_{i}^2(1+|[D_{x}^-\mathbf{u}]_{ij}|^2),\qquad\qquad
      M_{ij}^6  := h_{i}\max\{|[\widetilde D_{x}\mathbf{u}]_{i-1,j}|,|[\widetilde D_{x}\mathbf{u}]_{ij}|\},\\
      M_{ij}^7 &:= h_{i}(1+|[D_{x}^-\mathbf{u}]_{ij}|).
   \end{align*}
   
  Figure~\ref{fig:estim:noeps}
  \begin{figure}[tbp]
      \centerline{\includegraphics[width=0.8\textwidth]{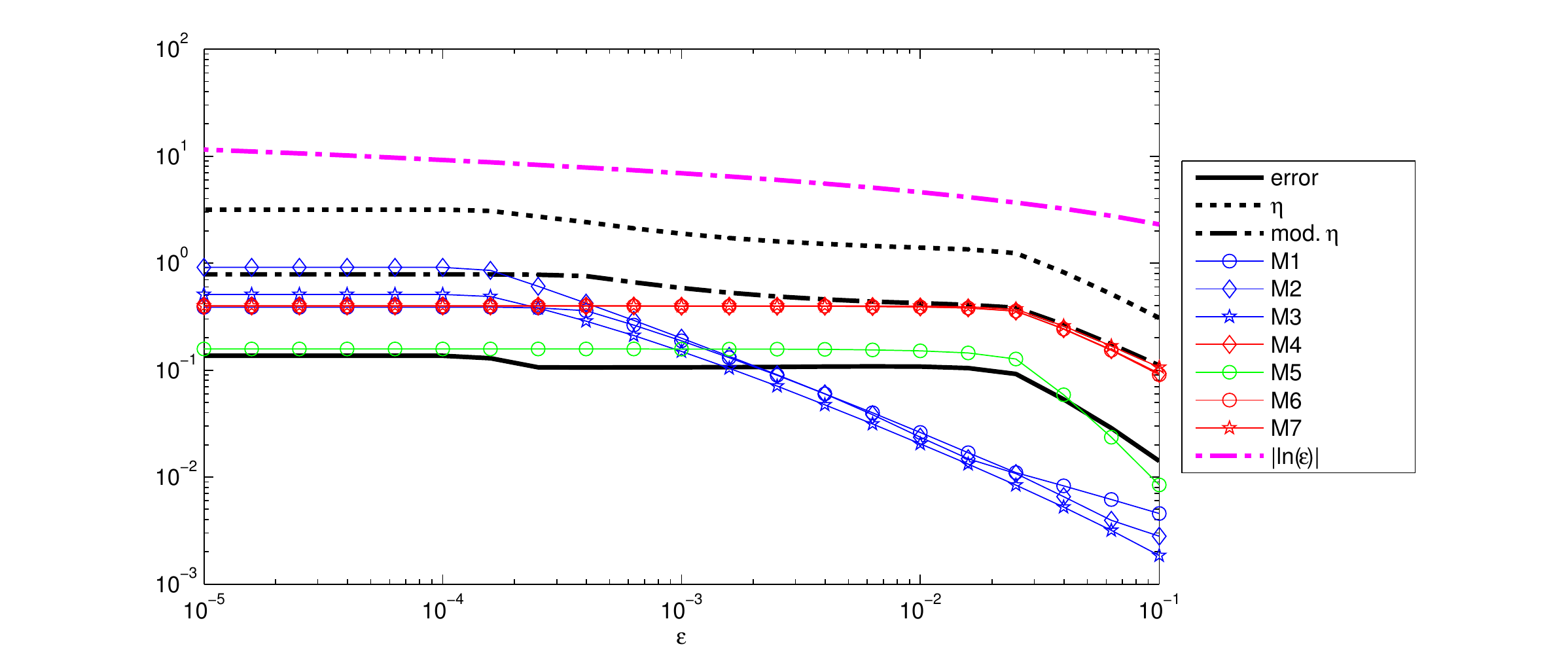}}
      \caption{Error and modified estimated error of \eqref{eq:num_example2} for $N=64$ on a Shishkin mesh}
             \label{fig:estim:noeps}
  \end{figure}
  shows the behaviour of these modified indicators.
  Obviously, there is no dependence on $\eps$ any longer and the errors
  are caught quite well.

   \subsubsection*{Convergence in $N$ on a-priori adapted meshes}
   For our second experiment we let $\eps=10^{-6}$ be constant,
   chose a-priori adapted meshes, apply the
   modified upwind method and estimate the error with $\eta$ and $\widetilde\eta$.
   Figures~\ref{fig:estim:Shishkin}
  \begin{figure}[tbp]
      \centerline{\includegraphics[width=0.75\textwidth]{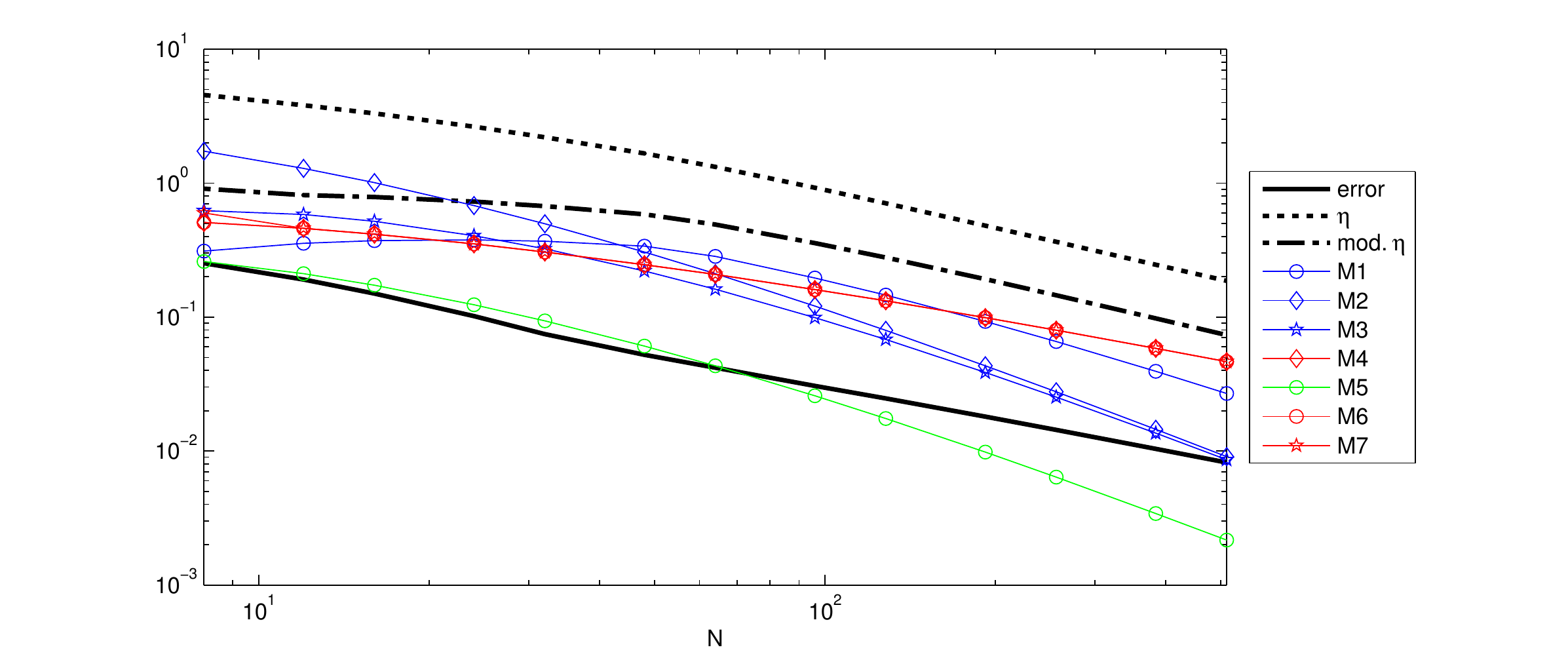}}
      \caption{Error and estimated error of \eqref{eq:num_example2} for $\eps=10^{-6}$ on Shishkin meshes}
             \label{fig:estim:Shishkin}
  \end{figure}
  and \ref{fig:estim:BSmesh}
  \begin{figure}[tbp]
      \centerline{\includegraphics[width=0.75\textwidth]{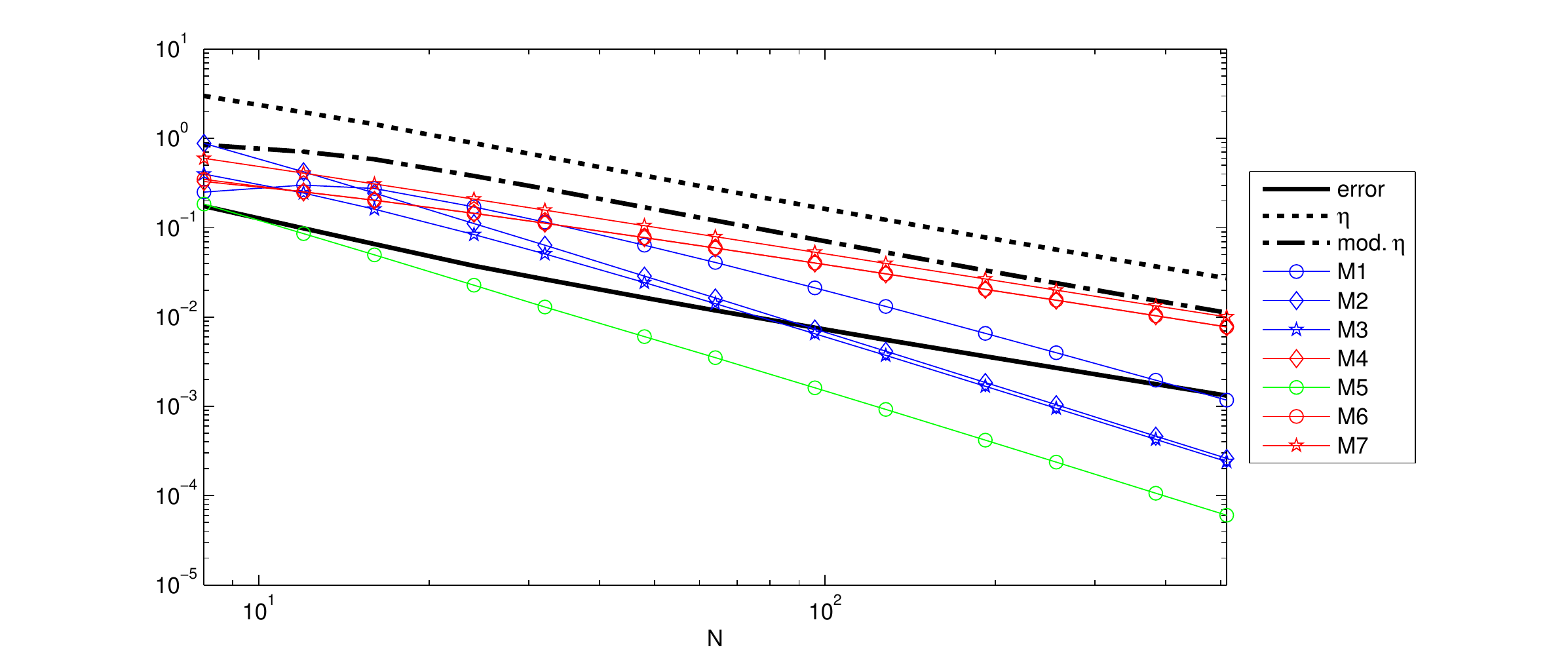}}
      \caption{Error and estimated error of \eqref{eq:num_example2} for $\eps=10^{-6}$ on a Bakhvalov S-meshes}
             \label{fig:estim:BSmesh}
  \end{figure}
  show the results for the two indicators and variable $N$. The principal behaviour of the errors is caught
  by both of them although the magnitude is wrong. We also observe the
  blue lines to fall much faster than the red lines. The reason behind is the formal
  second order convergence in $y$-direction of $M^1$ to $M^3$. This gives hope for
  a-posteriori mesh adaptation to behave better than a-priori adapted meshes.

  \subsubsection*{A-posteriori adapted meshes}
  \begin{figure}[tb]
      \centerline{\includegraphics[width=0.8\textwidth]{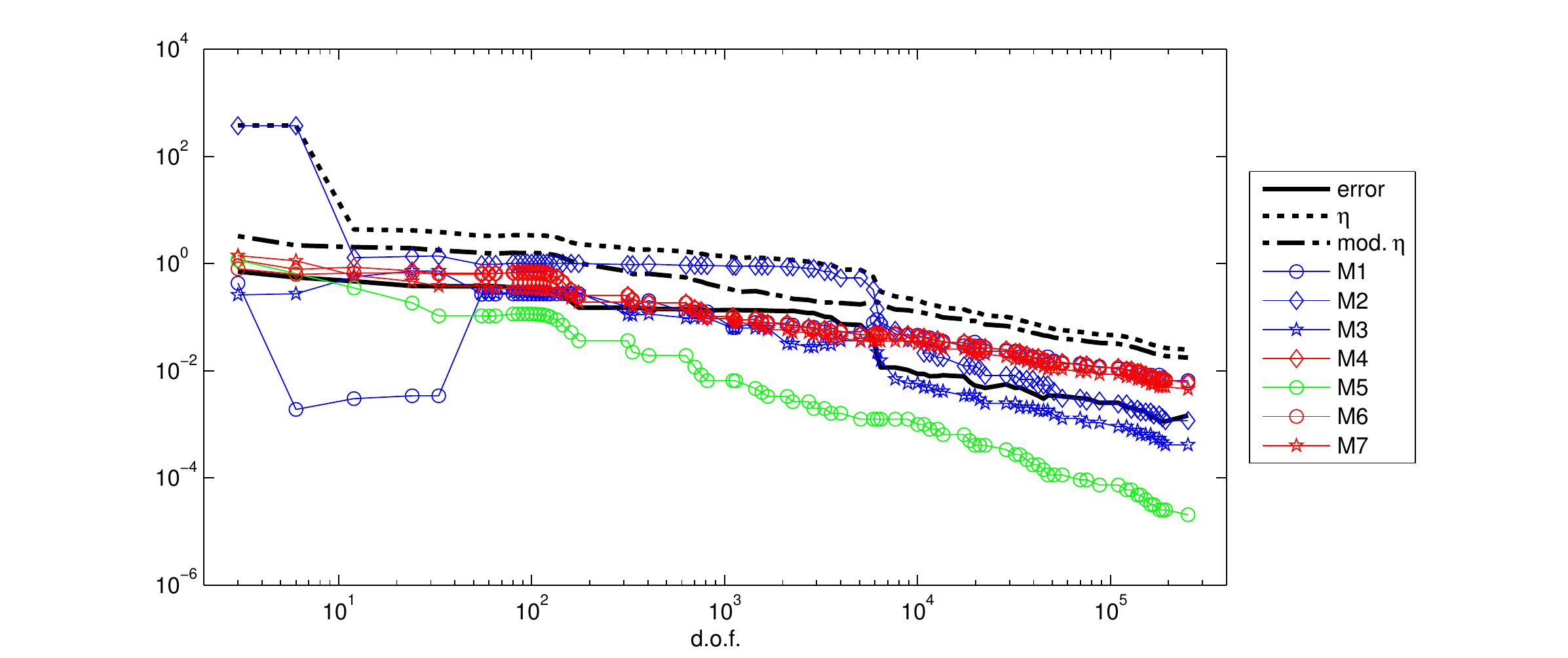}}
      \caption{Error and estimated error of \eqref{eq:num_example2} for $\eps=10^{-6}$ on an adapted mesh with initial Shishkin mesh}
             \label{fig:estim:adapt:Shishkin}
  \end{figure}
  \begin{figure}[tb]
      \centerline{\includegraphics[width=0.8\textwidth]{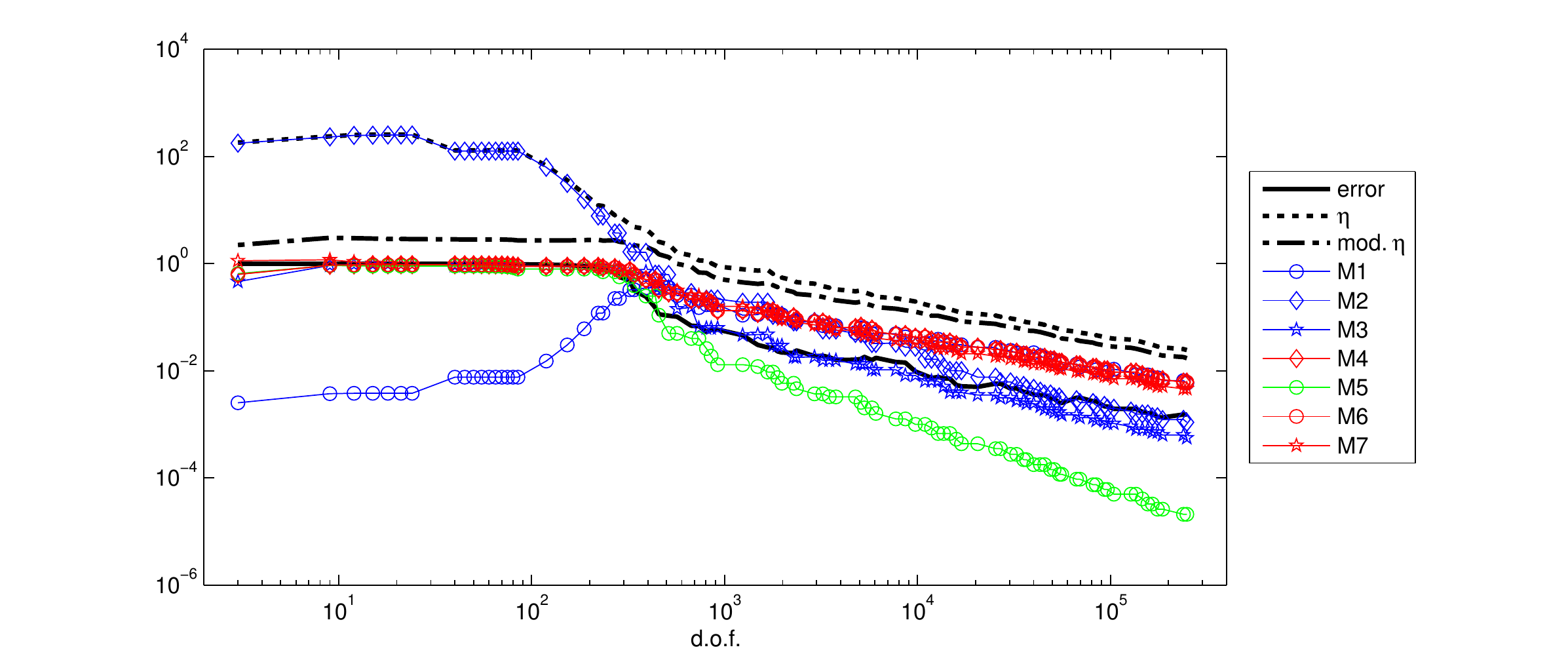}}
      \caption{Error and estimated error of \eqref{eq:num_example2} for $\eps=10^{-6}$ on an adapted mesh with initial equidistant mesh}
             \label{fig:estim:adapt:eq}
  \end{figure}
  Let us consider the following simple, anisotropic mesh adaptation approach.
  We start with a coarse initial, tensor-product mesh.
  In each step we compute the numerical solution on the given mesh
  and use the error indicators to decide, whether and where the $x$-part or
  the $y$-part of the tensor product mesh should be refined.
  This will be done as follows:
  \begin{enumerate}
   \item Compute $M_y:=\max\limits_{k=1,3}\{\max\{M_{ij}^k\}\}$ and
         $M_x:=\max\limits_{k=4,5,6,7}\{\max\{M_{ij}^k\}\}$.
   \item If $M_x>M_y$ we refine in $x$-direction, otherwise in $y$-direction.
         Assuming $M_x>M_y$ we collect all $i$ with $M_{ij}^k\geq\alpha\max\{M_{ij}^k\}$ for any $k=4,5,6,7$ and given $\alpha\in[0,1]$,
         and divide $[x_{i-1},x_i]$ into two intervals of equal length. 
         Similarly, we proceed in the other case and divide $[y_{j-1},y_j]$ 
         into two intervals for all $j$ with $M_{ij}^k\geq\alpha\max\{M_{ij}^k\}$ 
         for any $k=1,2,3$.
  \end{enumerate}
  With these refined partitions we construct a new tensor product mesh
  and the cycle begins again. This refinement process has a parameter $\alpha$
  influencing the marking of elements to refine.
  We chose $\alpha=0.9$ to refine only elements with large contributions.

  In Figure~\ref{fig:estim:adapt:Shishkin} and \ref{fig:estim:adapt:eq}
  the convergence results for $\eps=10^{-6}$ are shown until the number 
  of degrees of freedom reaches approximately $512^2$. 
  In Figure~\ref{fig:estim:adapt:Shishkin} initially a Shishkin mesh 
  of 4-by-4 cells was taken and in the end we have $1014\times 250$ cells.
  In Figure~\ref{fig:estim:adapt:eq} the initial mesh was equidistant with 4-by-4 cells, 
  and the final mesh has $992\times 252$ cells.

  We observe in both cases that our adaptation process reduces the error 
  nicely. The observed overall order of convergence (after some initial phase) 
  is $(\#dof)^{-1/2}$ where $\#dof$ is the number of degrees of freedom.

  Comparing the errors for the number of degrees of freedom taken to be 
  about $512^2$, Table~\ref{tab:estim:compare}
  \begin{table}[tbp]
   \begin{center}
    \caption{Comparison of errors of a-priori and a-posteriori meshes
             \label{tab:estim:compare}}
    \begin{tabular}{l|l|l}
       mesh & $\#dof$ & $\norm{\mathbf{u}^\B-u}{L_\infty(\Omega)}$\\
       \hline
       Shishkin-mesh                              &262144& 8.2024e-03\\
       Bakhvalov S-mesh                           &262144& 1.3226e-03\\
       adapted mesh with initial S-mesh           &253500& 1.4570e-03\\
       adapted mesh with initial equidistant mesh &249984& 1.5117e-03
    \end{tabular}
   \end{center}
  \end{table}
  shows the results on the a-posteriori adapted meshes to be comparable 
  to the a-priori adapted meshes. With the different number of cells
  in each direction the a-posteriori adapted meshes can reduce the error
  much better than a Shishkin mesh. Still, the grading of the Bakhvalov S-mesh
  gives a mesh with the smallest error.
  Moreover, the costs for an adaptive algorithm are high
  due to the repeated solving of the numerical problems on the different
  meshes.

%
%
   \chapter{Conclusion and Outlook}

We have presented convergence and supercloseness results for higher-order
finite-element methods, including stabilised methods like LPSFEM and SDFEM.
Having general polynomial spaces $\QS^\clubsuit_p$, convergence of order $p$
can be proved. If we use proper subspaces of $\QS_p$, like the Serendipity
space, we cannot apply the supercloseness techniques that are valid for the
full space $\QS_p$. But numerical results do also indicate, that for proper
subspaces no supercloseness property holds.

%
While numerical simulations indicate supercloseness properties of order $p+1$
for many methods, numerical analysis provides proof only for order $p+1/2$ in the
case of SDFEM.
Further research is needed to improve this situation. Some preliminary
results for the pure Galerkin method are topic of ongoing research. Here
a supercloseness property in the case of exponential boundary layers and odd polynomial degree $p$
of order $p+1/4$ could be proved, \cite{FrR13}. The proof therein can easily
be adapted to the case of characteristic boundary layers too. Nevertheless,
there is still a gap between theory and simulation of $3/4$ orders.

Although convergence and supercloseness can be proved in the energy and related
norms, these norms do not ``see'' the characteristic layers correctly. The layers are
under-represented in the resulting terms. An alternative is shown in the
balanced norm that has the right weighting of the norm components. But now
the Galerkin FEM is no longer coercive w.r.t. this norm.
Using certain stability arguments, for a modified bilinear SDFEM convergence in this norm
is proved. How the proof can be modified for the standard Galerkin FEM and other
stabilised methods, and for higher order methods in general is an open question.
Numerically, all these methods show the same convergence and supercloseness behaviour
in the energy and the balanced norm.

The use of a-priori adapted meshes requires knowledge about the layer-structure of
solutions to the considered problem. Alternatively, the mesh can be adapted \emph{after}
computation of an (approximative) numerical solution. For this a-posteriori mesh
adaptation, uniform error estimators or indicators are needed. We presented estimates
on the $L_1$-norm of the Green's function as an ingredient for $L_\infty$-error estimators.
A simple, first estimator for a finite difference method is also given and analysed.
The optimisation of this estimator and an extension of this approach to finite element
methods are open problems.
%
%
   \addstarredchapter{Bibliography}
   \fancyhead[C]{\nouppercase\leftmark\\}
  \bibliographystyle{habil_sort}
  \bibliography{lit}

%
%
\makeatletter
\newcommand\appendix@section[1]{%
  \refstepcounter{section}%
  \addcontentsline{toc}{section}{\textbf{\thesection}\hspace*{10pt}\textbf{\appauthor}: \apptitle{} \apppub}%
  \label{#1}
}
\let\orig@section\section
\g@addto@macro\appendix{\let\section\appendix@section}
\makeatother
%
\appendix
\chapter*{Appendix}
\addstarredchapter{Appendix}
\refstepcounter{chapter}
%
%
\minitoc
\fancyhead[C]{\nouppercase Appendix \\}
\clearpage
%
%
\fancyhead[C]{\nouppercase Appendix \thesection\\}
\includepdfset{pages={1-1}, pagecommand={\thispagestyle{fancy}}}

%
%
%
\renewcommand{\appauthor}{S. Franz, G. Matthies}
\renewcommand{\apptitle}{Local projection stabilisation on S-type meshes for convection-diffusion problems with characteristic layers.}
\renewcommand{\apppub}{Computing, 87(3-4), 135--167, 2010}
\section{app:LPS}

\textbf{\appauthor}: \apptitle{} \apppub\\[1cm]
\begin{center}
 [The article is removed from this electronic version due to copyright reasons.]\\[1cm]
\end{center}

\textbf{Abstract:}\\

    Singularly perturbed convection-diffusion problems with exponential and
    characteristic layers are considered on the unit square. The discretisation is
    based on layer-adapted meshes. The standard Galerkin method and the local
    projection scheme are analysed for bilinear and higher order finite element
    where enriched spaces were used. For bilinears, first order convergence in the
    $\eps$-weighted energy norm is shown for both the Galerkin and the stabilised
    scheme. However, supercloseness results of second orders hold for the Galerkin
    method in the $\eps$-weighted energy norm and for the local projection scheme
    in the corresponding norm. For the enriched $\mathcal{Q}_p$-elements, $p\ge 2$,
    which already contain the space $\mathcal{P}_{p+1}$, a convergence order $p+1$ in the
    $\eps$-weighted energy norm is proved for both the Galerkin method and the
    local projection scheme. Furthermore, the local projection methods provides a
    supercloseness result of order $p+1$ in local projection norm.\\[1em]

\textbf{Keywords:}
 Singular perturbation,
 Characteristic layers,
 Shishkin meshes,
 Local projection\\

\textbf{Mathematics Subject Classification (2000):} 65N12, 65N30, 65N50\\

\textbf{DOI:} 10.1007/s00607-010-0079-y

\clearpage
%
%
\renewcommand{\appauthor}{S. Franz, G. Matthies}
\renewcommand{\apptitle}{Convergence on Layer-adapted Meshes and Anisotropic Interpolation Error Estimates of Non-Standard Higher Order Finite Elements.}
\renewcommand{\apppub}{Appl. Numer. Math., 61, 723--737, 2011}
\section{app:nonstandard}

\textbf{\appauthor}: \apptitle{} \apppub\\[1cm]
\begin{center}
 [The article is removed from this electronic version due to copyright reasons.]\\[1cm]
\end{center}

\textbf{Abstract:}\\

    For a general class of finite element spaces based on local polynomial
    spaces $\mathcal{E}$ with \mbox{$\mathcal{P}_p\subset\mathcal{E}\subset\mathcal{Q}_p$}
    we construct a vertex-edge-cell and point-value oriented interpolation operators
    that fulfil anisotropic interpolation error estimates.

    Using these estimates we prove $\eps$-uniform convergence of order $p$ for the
    Galerkin FEM and the LPSFEM for a singularly perturbed
    convection-diffusion problem with characteristic boundary layers.\\[1em]

\textbf{Keywords:}
singular perturbation,
characteristic layers,
exponential layers,
Shishkin meshes,
local-projection,
higher-order FEM\\

\textbf{Mathematics Subject Classification (2000):} 65N12, 65N30, 65N50\\

\textbf{DOI:} 10.1016/j.apnum.2011.02.001
\clearpage
%
%
\renewcommand{\appauthor}{S. Franz}
\renewcommand{\apptitle}{Superconvergence Using Pointwise Interpolation in Convection-Diffusion Problems.}
\renewcommand{\apppub}{Appl. Numer. Math., 76, 132--144, 2014}
\section{app:GLinter}

\textbf{\appauthor}: \apptitle{} \apppub\\[1cm]
\begin{center}
 [The article is removed from this electronic version due to copyright reasons.]\\[1cm]
\end{center}

\textbf{Abstract:}\\

     Considering a singularly perturbed convection-diffusion problem,
     we present an analysis for a superconvergence result using pointwise
     interpolation of Gau\ss-Lobatto type for higher-order
     streamline diffusion FEM.
     We show a useful connection between two different types of interpolation,
     namely a vertex-edge-cell interpolant and a pointwise interpolant.
     Moreover, different postprocessing operators are analysed and applied
     to model problems.\\[1em]

\textbf{Keywords:}
singular perturbation,
layer-adapted meshes,
superconvergence,
postprocessing \\

\textbf{Mathematics Subject Classification (2000):} 65N12, 65N30, 65N50\\

\textbf{DOI:} 10.1016/j.apnum.2013.07.007
\clearpage
%
%
\renewcommand{\appauthor}{S. Franz}
\renewcommand{\apptitle}{SDFEM with non-standard higher-order finite elements for a convec\-tion-diffusion problem with characteristic boundary layers.}
\renewcommand{\apppub}{BIT Numerical Mathematics, 51(3), 631--651, 2011}
\section{app:SDFEM}

\textbf{\appauthor}: \apptitle{} \apppub\\[1cm]
\begin{center}
 [The article is removed from this electronic version due to copyright reasons.]\\[1cm]
\end{center}

\textbf{Abstract:}\\

    Considering a singularly perturbed problem with exponential and
    characteristic layers, we show convergence for non-standard
    higher-order finite elements using the streamline diffusion
    finite element method (SDFEM). Moreover, for the standard
    higher-order space $\QS_p$ supercloseness of the
    numerical solution w.r.t. an interpolation of the exact solution
    in the streamline diffusion norm of order $p+1/2$ is proved.
\\[1em]

\textbf{Keywords:}
singular perturbation,
characteristic layers,
exponential layers,
Shishkin mesh,
SDFEM,
higher order\\
            
\textbf{Mathematics Subject Classification (2000):} 65N12, 65N30, 65N50\\

\textbf{DOI:} 10.1007/s10543-010-0307-z
\clearpage
%
%
\renewcommand{\appauthor}{S. Franz}
\renewcommand{\apptitle}{Convergence Phenomena of $Q_p$-Elements for Convection-Diffusion Problems.}
\renewcommand{\apppub}{Numer. Methods Partial Differential Equations, 29(1), 280--296, 2013}
\section{app:phenomena}

\textbf{\appauthor}: \apptitle{} \apppub\\[1cm]
\begin{center}
 [The article is removed from this electronic version due to copyright reasons.]\\[1cm]
\end{center}

\textbf{Abstract:}\\

    We present a numerical study for singularly perturbed
    convection-diffusion problems using higher-order
    Galerkin and Streamline Diffusion FEM.
    We are especially interested in convergence and superconvergence
    properties with respect to different interpolation operators.
    For this we investigate pointwise interpolation and
    vertex-edge-cell interpolation.
\\[1em]

\textbf{Keywords:}
singular perturbation,
boundary layers,
layer-adapted meshes,
superconvergence
\\

\textbf{Mathematics Subject Classification (2000):} 65N12, 65N30, 65N50\\

\textbf{DOI:} 10.1002/num.21709
\clearpage
%
%
\renewcommand{\appauthor}{S. Franz, H.-G. Roos}
\renewcommand{\apptitle}{Error estimation in a balanced norm for a convection-diffusion problem with characteristic boundary layers.}
\renewcommand{\apppub}{Calcolo,\newline DOI:10.1007/s10092-013-0093-5, 2013}
\section{app:balanced}
\renewcommand{\apppub}{Calcolo, DOI:10.1007/s10092-013-0093-5, 2013}

\textbf{\appauthor}: \apptitle{} \apppub\\[1cm]
\begin{center}
 [The article is removed from this electronic version due to copyright reasons.]\\[1cm]
\end{center}

\textbf{Abstract:}\\

       The $\eps$-weighted energy norm is the natural norm for
       singularly perturbed convection-diffusion problems with exponential 
       layers.
       But, this norm is too weak to recognise features of
       characteristic layers.

       We present an error analysis in a differently weighted energy norm---a balanced
       norm---that overcomes this drawback.
\\[1em]

\textbf{Keywords:}
singular perturbation,
characteristic and exponential layers,
Shishkin mesh,
SDFEM, balanced norm\\

\textbf{Mathematics Subject Classification (2000):} 65N12, 65N30, 65N50\\

\textbf{DOI:} 10.1007/s10092-013-0093-5
\clearpage
%
%
\renewcommand{\appauthor}{S. Franz, N. Kopteva}
\renewcommand{\apptitle}{Green's function estimates for a singularly perturbed convection-diffusion problem.}
\renewcommand{\apppub}{Journal of Differential Equations, 252, 1521--1545, 2012}
\section{app:green}

\textbf{\appauthor}: \apptitle{} \apppub\\[1cm]
\begin{center}
 [The article is removed from this electronic version due to copyright reasons.]\\[1cm]
\end{center}

\textbf{Abstract:}\\

    We consider a singularly perturbed convection-diffusion
    problem posed in the unit square with a horizontal convective direction.
    Its solutions exhibit parabolic and exponential boundary layers.
    Sharp estimates of the Green's function and its
    first- and second-order derivatives are derived in the $L_1$ norm.
    The dependence of these estimates on the small diffusion parameter
    is shown explicitly.
    The obtained estimates will be used in a forthcoming numerical analysis
    of the considered problem.
\\[1em]

\textbf{Keywords:}
Green's function,
singular perturbations,
convection-diffusion
\\

\textbf{Mathematics Subject Classification (2000):} 35J08, 35J25, 65N15\\

\textbf{DOI:} 10.1016/j.jde.2011.07.033
\clearpage
%
%
\renewcommand{\appauthor}{S. Franz, N. Kopteva}
\renewcommand{\apptitle}{On the Sharpness of Green's function estimates for a convection-diffusion problem.}
\renewcommand{\apppub}{In Proceedings of the Fifth Conference on Finite Difference Methods: Theory and Applications (FDM: T\&A 2010), 44--57, Rousse University Press, 2011}
\section{app:green_sharp}

\textbf{\appauthor}: \apptitle{} \apppub\\[1cm]
\begin{center}
 [The article is removed from this electronic version due to copyright reasons.]\\[1cm]
\end{center}

\textbf{Abstract:}\\

   Linear singularly perturbed convection-diffusion
   problems with characteristic layers are considered in three dimensions.
   We demonstrate the sharpness of our recently obtained upper bounds for the associated Green's
   function and its derivatives in the $L_1$ norm.
   For this, in this paper we establish
   the corresponding lower bounds.
   Both upper and lower bounds explicitly show any dependence
   on the singular perturbation parameter.
\\[1em]

\textbf{Keywords:}
Green's function,
singular perturbations,
convection-diffusion,
a posteriori error estimates\\

\textbf{Mathematics Subject Classification (2000):} 35J08, 35J25, 65N15\\

\textbf{ISBN:} 978-954-8467-44-5
\clearpage

%
%
   \thispagestyle{empty}

\begin{center}
 \textbf{Erklärung}
\end{center}

   Hiermit versichere ich, dass ich die vorliegene Arbeit ohne
   unzul\"assige Hilfe Dritter und ohne Benutzung anderer als der
   angegebenen Hilsmittel angefertigt habe. Die aus fremden Quellen
   direkt oder indirekt \"ubernommenen Gedanken sind als solche
   kenntlich gemacht. Die Arbeit wurde bisher weder im In- noch im
   Ausland in gleicher oder \"ahnlicher Form einer anderen
   Pr\"ufungsbeh\"orde vorgelegt.

\vfill

Dresden, den 31.05.2013.

\end{document}